\numberwithin{equation}{section}
\newtheorem{proposition}{Proposition}[section]
\newtheorem{theorem}{Theorem}[section]
\newtheorem{remark}{Remark}[section]
\newcommand{\xd}{\textrm{d}}
\renewcommand{\div}[0]{\operatorname{div}}
\newcommand{\ep}[0]{\varepsilon}
\newcommand{\R}{\mathbb{R}}
\newcommand{\embed}{\hookrightarrow}
\def\wstarto{\stackrel{*}{\rightharpoonup}}
\newcommand{\mU}{\mathcal U}
\title[Local asymptotics for a tumor growth model]
{Local asymptotics and optimal control 
for a viscous Cahn-Hilliard-Reaction-Diffusion model for tumor growth}
\author[Elisa Davoli]{Elisa Davoli}
\address{Institute for Analysis and Scientific Computing, Vienna University of Technology, Wiedner Hauptstrasse 8-10, 1040 Vienna, Austria}
\email{elisa.davoli@tuwien.ac.at}
\author[Elisabetta Rocca]{Elisabetta Rocca}
\address{Dipartimento di Matematica, Università di Pavia
and IMATI - C.N.R., Via Ferrata 5, 27100 Pavia, Italy}
\email{elisabetta.rocca@unipv.it}
\author[Luca Scarpa]{Luca Scarpa}
\address{Department of Mathematics, Politecnico di Milano,
Via E.~Bonardi 9, 20133 Milano, Italy}
\email{luca.scarpa@polimi.it}
\author[Lara Trussardi]{Lara Trussardi}
\address{Institute of Mathematics and Scientific Computing, University of Graz, Heinrichstraße 36, 8010 Graz (Austria)}
\email{lara.trussardi@uni-graz.at}
\keywords{Viscous nonlocal Cahn-Hilliard equation, reaction-diffusion equation,
well-posedness, nonlocal-to-local convergence, optimal control, 
tumor growth models}
\subjclass[2010]{45K05, 35K25, 35K55, 35B40, 49K20, 92B05}
\begin{document}

\begin{abstract}
In this paper we study nonlocal-to-local asymptotics for a tumor-growth model coupling a viscous Cahn-Hilliard equation describing the tumor proportion with a reaction-diffusion equation for the nutrient phase parameter. 
First, we prove that solutions to the nonlocal Cahn-Hilliard system converge, as the nonlocality parameter tends to zero, to solutions to its local counterpart. Second, we provide first-order optimality conditions for an optimal control problem on the local model, accounting also for chemotaxis, and both for regular or singular potentials, without any additional regularity assumptions on the solution operator. The proof is based on an approximation of the local control problem by means of suitable nonlocal ones, and on proving nonlocal-to-local convergence both for the corresponding dual systems and for the associated first-order optimality conditions.
\end{abstract}

\maketitle

\tableofcontents

\section{Introduction}
\label{sec:intro}
We study here diffuse interface tumor-growth models characterized by viscous nonlocal and local Cahn-Hilliard systems coupled with reaction-diffusion equations. This formulation was introduced first in \cite{hawk2} and successively studied in many papers in the literature both from  the well-posedness~\cite{ col-gil-hil,  col-gil-roc-spr, col-gil-roc-spr2,frig-grass-roc} and the optimal control~\cite{col-gil-roc-spr-opt, col-sig-spr2, col-sig-spr} point of view.\\

Nonlocal Cahn-Hilliard type systems are currently the subject of an intense research activity. Among the vast literature we single out the contributions \cite{ab-bos-grass-NLCH, bat-han-NLCH, gal-gior-grass-NLCH, gal-grass-NLCH, han-NLCH} and the references therein. The first proof of existence of solutions for nonlocal Cahn-Hilliard equations with singular kernels with lower than $W^{1,1}$-regularity was carried out by some authors of this paper in \cite{DRST,DST}, where nonlocal-to-local convergence of the associated solutions was also established both in the Dirichlet and in the viscous homogeneous Neumann setting. The case of $W^{1,1}$ kernels was the subject of \cite{DST2}. In the same work, the authors provided also a connection with the seminal works of Sandier and Serfaty in \cite{sand-serf} on the evolutionary convergence of gradient flows with applications to Ginzburg-Landau functionals (see also \cite{lin, sand-serf}, as well as the study of a simplified tumor growth model in \cite{roc-sc}). Nonlocal-to-local convergence results have then been obtained in more general settings such as coupled systems \cite{AT} and degenerate mobilities \cite{ca-el-sk-DEG,el-sk-DEG}. Precise rates of convergence have been provided in \cite{AH}.
The strict separation property was analyzed in \cite{gal-gior-gras-SEP,poiatti-SEP}, whereas the case of degenerate mobility was the subject of \cite{frig-gal-gras-DEG}.\\

In the present contribution we focus instead on nonlocal-to-local asymptotics for Cahn-Hilliard tumor-growth models. To the Authors' knowledge, this is the first paper in the literature in this direction. In order to describe our results in details, we need to introduce some notation.\\

We denote by $\varphi\in(-1,1)$ the difference in volume fractions between the tumor cells ($\varphi=1$) and the healthy cells ($\varphi=-1$), and by $\sigma\in(0,1)$ the concentration of the nutrient. We then consider the following
local problem in $(0,T)\times\Omega$, where
$T>0$ is a fixed final time and $\Omega$
is a domain in $\mathbb R^d$:
\begin{align}
&\label{eq1:l}
\partial_t\varphi  -\div(m(\varphi)\nabla\mu) = R-h(\varphi)u\,,\\ 
&\label{eq1':l}
R=P(\varphi)(\sigma +\chi(1-\varphi)-\mu)
\,,\\ 
&\label{eq2:l}
\mu  =
\tau\partial_t\varphi
-\Delta\varphi +\psi'(\varphi)-\chi \sigma\,,
&\\ 
&\label{eq3:l}
\partial_t\sigma -\div[n(\varphi)\nabla(\sigma+\chi (1-\varphi))]=-R+w\,.
\end{align}
Here $m$ and $n$ are the (bounded from below and above) mobilities, 
$\tau>0$ is the viscosity coefficient, and $\chi\geq 0$ is the chemotaxis coefficient.
The potential $\psi$ is generally a  double-well potential having two global minima in correspondence of the pure phases: in particular, the case of polynomial potentials of the form
\begin{align}
	&\label{eq:reg-pot} \psi(s) = \frac{1}{4} (1 - s^2)^2, \quad s \in \R, 
\end{align}
can be included in our analysis. 
The term $R$ on the right-hand side
of the Cahn-Hilliard equation is
motivated from linear phenomenological laws
for chemical reactions  (see \cite{hawk2}), and takes into account the proliferation and death of tumoral cells. As mentioned in \cite{hawk2}, by means of proper numerical simulations, it is possible to show that the predominant term in \eqref{eq1':l} is the first one $P(\varphi)\sigma$, while the remaining terms are relevant only for large values of $\chi$. Moreover, the choice of $R$ automatically guarantees the thermodynamical consistency of the models, that is, the fact that the energy decreases along solutions in the case in which no source terms are present. The proliferation function  $P$  calibrates the strength of the reaction terms and is usually  taken as follows: 
\begin{equation} 
\label{eq:ex-P}
P(s) = \begin{cases}
	P_0 & \text{if } s \le -1, \\
	\frac{P_1 - P_0}{2}(s+1) + P_0 & \text{if } -1 < s <1, \\
	P_1 & \text{if } s \ge 1,
\end{cases} \end{equation}
where $P_0$ and $P_1$ are suitable positive constants, so that  the proliferation is faster in the tumor phase. 
There are possible different choices in the literature for the term $R$ on the right hand side, cf., e.g., \cite{garcke-lam,garcke-et-al,  garcke-lam-roc, SS21} and the references therein.
Note that, for technical reasons, in this paper we will be forced to assume that $P$ is strictly positive, albeit still possibly very close to zero.
This will be needed in proving the existence of strong solutions to \eqref{eq1:nl}--\eqref{eq5:nl} in the case of a singular potential (cf.~also \cite{CRW,fornoni} for similar assumptions). 

The external sources $u$ and $w$ are the controls that we are going to use in our optimal control problem to direct the evolution of the system towards suitable targets. 
In the bio-medical applications, they are interpreted as external therapies acting on the tumor: $u$ can represent a radiotherapy (distributed through the function $h$) acting on the proliferation of the tumor cells, whereas $w$ can be seen as a chemotherapy acting on the tumor through drugs or antiangiogenic therapy (cf., e.g., \cite{CGLRR}).

Finally, we complement the problem with 
homogeneous Neumann boundary conditions and 
given initial data:
\begin{align}
&\label{eq4:l}
\partial_{\bf n} \varphi=
\partial_{\bf n} \mu =
\partial_{\bf n} \sigma = 
0\qquad&\text{on } (0,T)\times \partial\Omega\,,\\
&\label{eq5:l}
\varphi(0)=\varphi_0\,, \quad
\sigma(0)=\sigma_0 \qquad&\text{in } \Omega\,.
\end{align}

The corresponding nonlocal problem, studied (in some cases with extra regularizing terms), e.g., in \cite{fornoni, frig-lam-roc, frig-lam-sig}, is given by
the following PDE system in $(0,T)\times\Omega$
\begin{align}
&\label{eq1:nl}
\partial_t\varphi_\ep  -\div(m(\varphi_\ep)\nabla\mu_\ep) 
= R_\ep - h(\varphi_\ep)u_\ep,\\ 
&\label{eq1':nl}
R_\ep=P(\varphi_\ep)(\sigma_\ep +\chi(1-\varphi_\ep)-\mu_\ep)
,\\ 
&\label{eq2:nl}
\mu_\ep  =
\tau\partial_t\varphi_\ep +
(J_\ep * 1)\varphi_\ep -J_\ep *\varphi_\ep +\psi'(\varphi_\ep)-\chi \sigma_\ep
,\\ 
&\label{eq3:nl}
\partial_t\sigma_\ep -\div[n(\varphi_\ep)\nabla(\sigma_\ep+\chi (1-\varphi_\ep))]=-R_\ep + w_\ep,
&
\end{align}
coupled with the following initial and boundary conditions
\begin{align}\label{eq4:nl}
&\partial_{\bf n} \mu_\ep =
n(\varphi_\ep)\partial_{\bf n} (\sigma_\ep-\chi\varphi_\ep) = 
0\qquad&\text{on } (0,T)\times \partial\Omega\,,\\
&\label{eq5:nl}
\varphi_\ep(0)=\varphi_{0,\ep}\,, \quad
\sigma_\ep(0)=\sigma_{0,\ep} 
\qquad&\text{in } \Omega\,.
\end{align}

The free energy associated to the local system \eqref{eq1:l}--\eqref{eq5:l} reads as
\begin{align*}
\mathcal{E}(\varphi, \sigma) := \int_{\Omega} \left(\frac{|\nabla \varphi|^{2}}{2}+  \psi(\varphi) + \frac{1}{2} |\sigma|^{2} + \chi \sigma (1-\varphi)\right) \xd x,
\end{align*}
where the first two terms form the well-known Modica--Mortola energy, leading to phase separation and surface tension effects.

The energetics of the nonlocal variant \eqref{eq1:nl}--\eqref{eq5:nl} of \eqref{eq1:l}--\eqref{eq5:l} are encoded by the following nonlocal free energy where the Dirichlet contribution in $\mathcal{E}$ is replaced by the following nonlocal counterpart:
\begin{align*}
\int_{\Omega} \int_{\Omega} \frac{1}{4} J_\ep(x-y)(\varphi(x) - \varphi(y))^{2} \xd x\, \xd y,
\end{align*}
where $J_\ep$ is a symmetric interaction kernel defined on $\Omega \times \Omega$.\\

In many different biological models, nonlocal interactions have been used to describe competition for space and degradation \cite{SRLC}, spatial redistribution \cite{Borsi, Lee}, and cell-to-cell adhesion \cite{Armstrong, Chaplin, Gerisch} and, at least in a formal way, the
nonlocal dynamics approach the local ones when the family of interaction kernels $J_\ep$ concentrates around the origin.

Our main contribution is a rigorous characterization of such nonlocal-to-local asymptotics both for the tumor-growth model introduced above, as well as for an associated control problem, which we describe below.  
\smallskip

\noindent(CP) \textit{Minimise the cost functional}
\begin{equation}
\label{eq:J}
	\begin{split}
		\mathcal{J}(\varphi, \sigma, u, w) & = \, \frac{\alpha_{\Omega}}{2} \int_{\Omega} |\varphi(T) - \varphi_{\Omega}|^2 + \frac{\alpha_Q}{2} \int_{0}^{T} \int_{\Omega} |\varphi - \varphi_Q|^2 
        + \frac{\beta_Q}{2} \int_{0}^{T} \int_{\Omega} |\sigma - \sigma_Q|^2 \\ 
		& \quad + \frac{\alpha_u}{2} \int_{0}^{T} \int_{\Omega} |u|^2  + \frac{\beta_w}{2} \int_{0}^{T} \int_{\Omega} |w|^2\,,
	\end{split}
\end{equation}
\textit{subject to the control constraints}
\begin{equation} 
\label{eq:uad}
	\begin{split}
	& u \in \mathcal{U}_{ad} := \{ u \in L^{\infty}(Q_T) \cap H^1(0,T;L^2(\Omega)) \mid u_{\text{min}} \le u \le u_{\text{max}} \text{ a.e. in } Q_T, \\ 
	& \qquad \qquad \qquad \|u\|_{H^1(0,T;L^2(\Omega))} \le M \}, \\
	& w \in \mathcal{V}_{ad} := \{ w \in L^{\infty}(Q_T) \mid w_{\text{min}} \le w \le w_{\text{max}} \text{ a.e. in } Q_T \},
	\end{split}
\end{equation}
\textit{and to the nonlocal state system \eqref{eq1:nl}-\eqref{eq5:nl}.} 
\smallskip


The cost functional $\mathcal{J}$ is a standard tracking type cost functional, where the constants $\alpha_\Omega, \alpha_Q, \beta_Q, \alpha_u, \beta_w$ are non-negative parameters that can be used to weigh the target functions $\varphi_\Omega$ (a final target for the tumor distribution), $\varphi_Q$ (a desired tumor evolution), as well as $\sigma_Q$ (desired evolution for the nutrient). 
Finally, the last two terms in the cost functional penalise large use of radiotherapy or drugs, which could still harm vital organs of the patient. From the point of view of biological applications, the most natural choice for $\mathcal{U}_{ad}$ would be $\mathcal{U}_{ad}=\{u\in L^\infty(Q_T)|u_{\rm min}\leq u\leq u_{max}\,\text{a.e. in }Q_T\}$. The higher regularity requirement $u\in H^1(0,T;L^2(\Omega))$ in \eqref{eq:uad} is needed in order to achieve well-posedness owing to the fact that $u$ acts as source term in the Cahn-Hilliard equation (see also \cite[Theorems 3.5 and 3.7]{fornoni}).\\

Our main result is twofold. First, in Theorem~\ref{th:conv} we show that solutions to the nonlocal Cahn-Hilliard tumor-growth systems above converge, as $\ep\searrow 0$, to solutions of its corresponding local counterpart. Second, starting from the nonlocal optimal control problem described above, in Theorems~\ref{th:opt_adap} and \ref{th:conv_dual} we pass to the limit as $\ep\to 0$ in the associated optimality conditions and adjoint system, recently identified in \cite{fornoni}. As a result, in Theorem \ref{th:foc_local} we derive first order necessary optimality conditions for the control problem driven by the same cost functional  $\mathcal{J}$ as in \eqref{eq:J} and encoded by the local system \eqref{eq1:l}--\eqref{eq5:l} without imposing any extra regularity or differentiability assumptions on the local solution operator. Indeed,  one of the main advantages of the nonlocal setting is
that more regularity results  are usually available and so, having at disposal rigorous
nonlocal-to-local convergence results,  gives us  the opportunity to approximate
solutions to local phase-field systems with the solutions to the corresponding nonlocal
ones. For completeness we point out that from a modeling point of view, it would be meaningful to incorporate in $\mathcal{J}$ a further term enforcing a final target for the nutrient. Handling such mathematical term is, currently, still out of reach: In fact, in order to characterize the asymptotics of the final-time behavior of the nutrient as $\ep\to 0$ in the adjoint problem, the presence of the chemotaxis coefficient $\chi$ would require stronger nonlocal-to-local convergences compared to the ones which we deduce in this contribution, cf. Section~\ref{sec:n-l opt cont}.\\

The proof of Theorem~\ref{th:conv} relies on the establishment of uniform a-priori bounds for the solutions to the nonlocal system, as well as on a combination of the theory for maximal monotone graphs with some interpolation lemmas established in \cite{DST2}, and on a further compactness arguments for the chemotaxis, chemical-reaction term and proliferation function. Theorem~\ref{th:opt_adap} exploits essentially the variational structure of the control problems and is based on choosing the right competitors in the local and nonlocal settings. The starting point for identifying first-order optimality conditions for the local control problem in Theorems~\ref{th:opt_adap}, \ref{th:conv_dual}, and \ref{th:foc_local} are the results in \cite{fornoni}. Relying on such characterizations of the optimality conditions and of the dual systems for the nonlocal case, by choosing suitable test functions for the nonlocal dual system we establish $\ep$-independent uniform estimates which eventually allow us to pass to the limit and characterize the local asymptotics for the control problem.\\

The paper is organized as follows. In Section~\ref{sec:prel} we specify our mathematical setting and recall a few preliminary results from \cite{fornoni} on the well-posedness of the system for $\ep>0$ fixed.
In Section \ref{sec:state} we state and prove Theorem~\ref{th:conv}. Section~\ref{sec:opt} is entirely devoted to the local and nonlocal control problems: after specifying our working assumptions, we recall the characterization of the dual system and of the first-order optimality conditions in \cite{fornoni} and we state and prove Theorem~\ref{th:opt_adap}. Eventually, Section~\ref{sec:n-l opt cont} focuses on the passage to the limit in the nonlocal optimality conditions and on the identification of the corresponding local ones, and consists of the statements and proofs of Theorems~\ref{th:conv_dual} and \ref{th:foc_local}.

\section{Setting and preliminary results}
\label{sec:prel}
In the paper $\Omega\subset \mathbb{R}^d$, $d=2,3$, is a smooth bounded domain with sufficiently smooth boundary and $T>0$ is a fixed final time.
We set $Q:=(0,T)\times\Omega$ and $Q_t:=(0,t)\times\Omega$ for all $t\in(0,T)$. We define the functional spaces 
\[
H:=L^2(\Omega)\,,\qquad V:=H^1(\Omega)\,, \qquad
W:=\{v\in H^2(\Omega): \;
\partial_{\bf n}v=0 \;\text{a.e.~on}\; \partial\Omega\}\,,
\]
endowed with their natural norms.
We identify $H$ with its dual, as usual, so that we have the following 
dense, continuous, and compact inclusions:
\[
  W\embed V \embed H \embed V^*\embed W^*\,.
\]
The duality pairing between $V^*$ and 
$V$ will be denoted by the symbol 
$\langle\cdot,\cdot\rangle$.
For any $v\in V^*$, we will use the classical notation 
$v_\Omega:=\frac1{|\Omega|}
\langle v, 1\rangle$
for the mean value of $v$.
The variational formulation of the
negative Laplacian with homogeneous Neumann conditions
is denoted by 
\[
B:V\to V^*\,, \qquad \langle Bv_1, v_2\rangle_{V^*, V}:=
\int_\Omega\nabla v_1\cdot\nabla v_2\,\xd x\,,
\quad v_1,v_2\in V\,.
\]

\smallskip

We assume the following setting,
as in~\cite{fornoni}.
\begin{enumerate}[start=1,label={{\bf A\arabic*}}]
    \item
    For every $\ep>0$ we define
    the convolution kernel
    $J_\ep:\mathbb{R}^d\to\mathbb{R}$ as 
    \begin{equation}\label{eq:kernel}
      J_\ep(z) :=\frac{\rho_\ep (|z|)}{\ep^{2-\alpha}|z|^\alpha}\,,
      \quad z\in\mathbb{R}^3\,,
    \end{equation}
    where $\alpha\in[0,d-2)$ is fixed,
    and
    \[
      \rho_\ep(r):=
      \frac{1}{\ep^d}\rho(r/\ep)\,,
      \quad r\in[0,+\infty)\,.
    \]
    Here, $\rho:[0,+\infty)\to[0,+\infty)$ is a given function with compact support, of class $C^2$, and such that
    \begin{align}
    \label{rho0}
    &\int_0^{+\infty}
    r^{d-1-\alpha}|\rho''(r)|\,\xd r
    <+\infty\,,\\
    \label{rho1}
    &c_{\alpha,d}:=\int_0^{+\infty}
    r^{d-1-\alpha}|\rho'(r)|\,\xd r
    <+\infty\,,\\
    \label{rho2}
    &\int_0^{+\infty}
    r^{d+1-\alpha}\rho(r)\,\xd r=
    \frac2{C_{dim}}\,, \qquad
    C_{dim}:=\int_{S^{d-1}}|\sigma\cdot
    e_1|^2\,\xd \mathcal H^{d-1}(\sigma)
    \,,
    \end{align}
    where $S^{d-1}$ is the unit sphere in $\mathbb R^d$.
    In this setting, for every $\ep>0$
we define
\[
  (J_\ep*v)(x):=
  \int_\Omega J_\ep(x-y)v(y)\,\xd y\,,
  \quad\forall\,v\in L^1(\Omega)\,.
\]
From \eqref{eq:kernel} and
\eqref{rho1}--\eqref{rho2}
it follows that (see \cite{DST2, DST})
\[
  z\mapsto J_\ep(|z|) \in W^{2,1}_{\text{loc}}
  (\mathbb{R}^d)\,,
\]
so that we can define the nonlocal energy and its respective differential
\[
    E_\ep:H\to[0,+\infty)\,,
    \qquad B_\ep:H\to H
\]
as
\begin{align*}
    &E_\ep(v):=\frac14
    \int_{\Omega\times\Omega}
    J_\ep(x-y)|v(x)-v(y)|^2
    \,\xd x\,\xd y\,,
    \quad v\in H\,,\\        &B_\ep(v):=
    (J_\ep*1)v-J_\ep*v\,,
    \quad v\in H\,.
\end{align*}
Let us also recall that $E_\ep\in C^1(H)$ with 
$DE_\ep=B_\ep$ (see again \cite{DST2, DST}).
 \item The potential $\psi:\mathbb{R}\to[0,+\infty)$ is
    of class $C^2$, and there are 
    constants $C_\psi, c_\psi, M_\psi>0$
    such that,
    for every $r\in\mathbb{R}$ 
    there holds
    \begin{align*}
       &\psi' (r)r\geq c_\psi |r|^2 -c_\psi^{-1}\,,\\
       &|\psi''(r)|\leq M_\psi(1+|r|^2)
       \,, \qquad
       \psi''(r)\geq-C_\psi
       \,,\\
       &|\psi'(r)|\leq M_\psi(1+\psi(r))\,,
    \end{align*}
    which implies, by possibly
    renominating $c_\psi$, that
    \[
    \psi (r)\geq c_\psi |r|^2 -c_\psi^{-1} \quad\forall\,r
    \in\mathbb{R}\,.
    \]
    Defining 
  \[
  0<\ep_0^2< \frac{c_{\alpha,d}|S^{d-1}|}{C_\psi}\,,
  \]
   by virtue of 
  \cite[Lem.~3.1]{DST2},  
  for every $\ep\in(0,\ep_0)$ there holds 
  \[
  \psi''(r) + (J_\ep*1)(x) \geq 
  \frac{c_{\alpha,d}|S^{d-1}|}{\ep_0^2}-C_\psi>0
  \quad\forall\,r\in\mathbb R\,,\quad\text{for a.e.~}x\in\Omega\,.
  \]
    \item The viscosity and chemotaxis coefficients
    satisfy $\tau>0$ and $\chi\in[0,\sqrt{c_\psi})$. 
    \item The mobilities
    satisfy
    $m,n\in C^0(\mathbb{R})$, and there
    exist $M_*,M^*>0$ such that 
    \[
    M_*\leq m(r),n(r)\leq M^*
    \quad\forall\,r\in\mathbb{R}\,.
    \]
    \item The
    proliferation function satisfies $P\in C^0(\mathbb{R})$ 
    and 
    there exists $C_P>0$ such that
    \[
    0\leq P(r) \leq C_P(1+|r|^4)
    \quad\forall\,r\in\mathbb{R}\,.
    \]
    \item 
    The interpolation function $h\in C^0(\mathbb{R})\cap L^{\infty}(\mathbb{R})$ is 
    nonnegative.
    \item The initial data of the local problem satisfy
    \begin{equation}
    \label{init_l}
    \varphi_0\in V\,, \qquad
    \psi(\varphi_0)\in L^1(\Omega)\,,
    \qquad
    \sigma_0\in H\,.
    \end{equation}
    The initial data of the $\ep$-nonlocal problem satisfy
    \begin{align}
    \label{init_nl1}
    &\varphi_{0,\ep}\in V\,, \quad
    \psi(\varphi_{0,\ep})\in L^1(\Omega)\,,
    \quad
    \sigma_{0,\ep}\in H
    \qquad\forall\,\ep\in(0,\ep_0)\,,\\
    \label{init_nl2}
    &(\varphi_{0,\ep}, \sigma_{0,\ep})
    \to
    (\varphi_0,\sigma_0) \quad\text{in } H\times H
    \qquad\text{as }
    \ep\searrow0\,,\\
    \label{init_nl3}
    &\sup_{\ep\in(0,\ep_0)}\left(
    E_\ep(\varphi_{0,\ep})
    +\|\psi(\varphi_{0,\ep})\|_{L^1(\Omega)}
    +\tau\|\varphi_{0,\ep}\|^6_{L^6(\Omega)}\right)
    <+\infty\,.
    \end{align}
\end{enumerate}

\smallskip

We point out that the polynomial potentials in \eqref{eq:reg-pot} fall within our analysis.
In this setting, the nonlocal problem at $\ep>0$ fixed has been widely studied in \cite{frig-lam-roc},
also in the setting of degenerate mobilities but without viscous 
regularisation. In our setting, the 
viscosity contribution allows us to 
refine the assumptions and exploit the results of \cite{fornoni} instead.
We recall here 
the weak existence result 
for the nonlocal problem proven in \cite[Thm. 2.3]{fornoni}.


\begin{theorem}
[Well-posedness: $\ep>0$]
  \label{th:wp_nl}
  Assume {\bf A1}--{\bf A7}
  and let $\ep\in(0,\ep_0)$. Then, 
  for every
  \begin{equation}\label{uw}
      (u,w)\in [L^\infty(Q)\cap H^1(0,T; H)]\times L^\infty(Q)\,,
  \end{equation}
  there exists a  triplet 
  $(\varphi_\ep, \mu_\ep, \sigma_\ep)$, with 
  \begin{align}
  \label{nl1}
  &\varphi_\ep\in 
  H^1(0,T; H)\cap L^\infty(0,T; V)\,,\\
  \label{nl2}
  &\sigma_\ep\in 
  H^1(0,T; V^*)\cap L^2(0,T; V)\,,\\
  \label{nl3}
  &\mu_\ep:=
  \tau\varphi_\ep+
  B_\ep(\varphi_\ep)+
  \psi'(\varphi_\ep)-\chi\sigma_\ep
  \in L^2(0,T; V)\,,\\
  \label{nl4}
  &R_\ep:=P(\varphi_\ep)(\sigma_\ep+\chi(1-\varphi_\ep)-\mu_\ep)
  \in L^2(0,T; L^{6/5}(\Omega))\,,
  \end{align}
  such that
  \begin{equation}
      \label{nl5}
      \varphi_\ep(0)=\varphi_{0,\ep}\,,
      \quad
      \sigma_\ep(0)=\sigma_{0,\ep}\,,
  \end{equation}
  and
  \begin{align}
  \label{nl6}
      &\langle\partial_t\varphi_\ep,
      \zeta\rangle
      +\int_\Omega
      m(\varphi_\ep)\nabla\mu_\ep\cdot
      \nabla\zeta \xd x= 
      \int_\Omega
      R_\ep\zeta\,\xd x
      -\int_\Omega
      h(\varphi_\ep)u\zeta\,\xd x\,,\\
  \label{nl7}
      &\langle\partial_t\sigma_\ep,
      \zeta\rangle +
      \int_\Omega n(\varphi_\ep)
      \nabla(\sigma_\ep+
      \chi(1-\varphi_\ep))\cdot
      \nabla\zeta\,\xd x=
      -\int_\Omega
      R_\ep\zeta\,\xd x
      +\int_\Omega
      w\zeta\,\xd x\,,
  \end{align}
  almost everywhere in $(0,T)$,
  for every $\zeta\in V$. 
\end{theorem}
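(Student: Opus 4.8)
The plan is to follow the argument of \cite[Thm.~2.3]{fornoni}: construct approximate solutions by a Faedo--Galerkin scheme in the eigenbasis of $B$ (admissible at fixed $\ep$, since {\bf A2} gives $\psi\in C^2$ with at most cubic growth of $\psi'$, while $m,n,P,h$ enter only continuously so that Peano's theorem suffices for the Galerkin ODEs), derive a priori bounds uniform in the discretisation index from the energy structure, and pass to the limit by compactness. The quantity driving the estimates is the nonlocal free energy
\[
\mathcal E_\ep(\varphi,\sigma):=E_\ep(\varphi)+\int_\Omega\Big(\psi(\varphi)+\tfrac12|\sigma|^2+\chi\sigma(1-\varphi)\Big)\,\xd x,
\]
which, owing to $\chi\in[0,\sqrt{c_\psi})$ ({\bf A3}) and $\psi(r)\ge c_\psi|r|^2-c_\psi^{-1}$ ({\bf A2}), is bounded below and coercive with respect to $E_\ep(\varphi)+\|\varphi\|_H^2+\|\sigma\|_H^2$.

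For the a priori estimates I would test \eqref{nl6} by $\mu_\ep$, using \eqref{nl3} and $DE_\ep=B_\ep$ to rewrite $\langle\partial_t\varphi_\ep,\mu_\ep\rangle=\tau\|\partial_t\varphi_\ep\|_H^2+\tfrac{\xd}{\xd t}\big(E_\ep(\varphi_\ep)+\int_\Omega\psi(\varphi_\ep)\big)-\chi\int_\Omega\sigma_\ep\partial_t\varphi_\ep$, and simultaneously test \eqref{nl7} by $\sigma_\ep+\chi(1-\varphi_\ep)$. After summing, the two reaction terms collapse into the dissipative quantity $\int_\Omega R_\ep(\mu_\ep-\sigma_\ep-\chi(1-\varphi_\ep))=-\int_\Omega R_\ep^2/P(\varphi_\ep)\le0$ (here $P>0$ is used), the residual $\chi$-contributions reconstruct $\tfrac{\xd}{\xd t}\chi\int_\Omega\sigma_\ep(1-\varphi_\ep)$, and the left-hand side becomes $\tfrac{\xd}{\xd t}\mathcal E_\ep$ plus the dissipation $\tau\|\partial_t\varphi_\ep\|_H^2+\int_\Omega m(\varphi_\ep)|\nabla\mu_\ep|^2+\int_\Omega n(\varphi_\ep)|\nabla(\sigma_\ep+\chi(1-\varphi_\ep))|^2+\int_\Omega R_\ep^2/P(\varphi_\ep)$. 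The one delicate right-hand term, $\int_\Omega h(\varphi_\ep)u\,\mu_\ep$, I would handle by substituting \eqref{nl3} once more: its $\tau\partial_t\varphi_\ep$-piece is absorbed by the viscous dissipation, while the $B_\ep\varphi_\ep$-, $\psi'(\varphi_\ep)$- (via $|\psi'(r)|\le M_\psi(1+\psi(r))$) and $\sigma_\ep$-pieces are controlled by $\mathcal E_\ep$ together with $\|h\|_{L^\infty},\|u\|_{L^\infty},\|w\|_{L^\infty}$ ({\bf A4}--{\bf A6}). Gronwall's lemma then yields $\varphi_\ep\in H^1(0,T;H)$, $\sup_t\mathcal E_\ep<\infty$, $\psi(\varphi_\ep)\in L^\infty(0,T;L^1)$, $\sigma_\ep\in L^\infty(0,T;H)$, $\nabla\mu_\ep,\nabla(\sigma_\ep+\chi(1-\varphi_\ep))\in L^2(Q)$ and $R_\ep/\sqrt{P(\varphi_\ep)}\in L^2(Q)$. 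To upgrade, I would differentiate \eqref{nl3} in space, test by $\nabla\varphi_\ep$, and exploit $\psi''(r)+(J_\ep*1)(x)>0$ ({\bf A2}, valid precisely for $\ep<\ep_0$) to bound $\int_\Omega(\psi''(\varphi_\ep)+(J_\ep*1))|\nabla\varphi_\ep|^2$ from below; after absorbing the chemotactic cross term, a second Gronwall argument gives $\varphi_\ep\in L^\infty(0,T;V)$, i.e.\ \eqref{nl1}. Hence $\sigma_\ep\in L^2(0,T;V)$, $\varphi_\ep\in L^\infty(0,T;L^6)$ by $V\embed L^6$ ($d\le3$), $R_\ep\in L^2(0,T;L^{6/5})$ by {\bf A5} paired with $\sigma_\ep+\chi(1-\varphi_\ep)-\mu_\ep\in L^2(0,T;L^6)$ (i.e.\ \eqref{nl4}), $(\mu_\ep)_\Omega\in L^2(0,T)$ from \eqref{nl3} tested by $1$ (using $(B_\ep\varphi_\ep)_\Omega=0$ by symmetry of $J_\ep$) so that $\mu_\ep\in L^2(0,T;V)$ by the Poincar\'e--Wirtinger inequality, and $\partial_t\sigma_\ep\in L^2(0,T;V^*)$ by comparison in \eqref{nl7}, i.e.\ \eqref{nl2}.

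For the passage to the limit I would extract subsequences and apply Aubin--Lions--Simon: $\varphi_\ep$ converges strongly in $C([0,T];L^p)$ for every $p<6$ and a.e.\ in $Q$, $\sigma_\ep$ strongly in $L^2(0,T;H)$, with the initial conditions \eqref{nl5} attained in the limit, and the remaining convergences being weak or weak-$*$. At fixed $\ep$, $B_\ep$ is bounded and linear on $H$, so $B_\ep(\varphi_\ep)$ converges strongly; $m(\varphi_\ep),n(\varphi_\ep),h(\varphi_\ep)$ converge strongly in every $L^q(Q)$ by dominated convergence ({\bf A4},{\bf A6}); $\psi'(\varphi_\ep)$, bounded in $L^\infty(0,T;L^2)$ by the cubic bound from {\bf A2}, converges weakly; products of these with the weakly convergent gradients pass to the limit in \eqref{nl3} and in the weak formulations \eqref{nl6}--\eqref{nl7}. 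For $R_\ep$ one uses that $P(\varphi_\ep)$, bounded in $L^\infty(0,T;L^{3/2})$ ({\bf A5}) and converging a.e., converges strongly in $L^s(0,T;L^q)$ for all $s<\infty$ and $q<3/2$ (the $L^\infty(0,T;L^6)$ bound supplying the uniform integrability); pairing with $\sigma_\ep+\chi(1-\varphi_\ep)-\mu_\ep\rightharpoonup\sigma+\chi(1-\varphi)-\mu$ in $L^2(0,T;L^6)$ identifies $\lim R_\ep=P(\varphi)(\sigma+\chi(1-\varphi)-\mu)$, which by the $L^2(0,T;L^{6/5})$ bound is also the weak limit there.

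I expect the reaction term $R_\ep$ to be the main obstacle on two fronts. First, the estimates are tightly coupled and must be closed in a carefully chosen order (or simultaneously, by a continuation/bootstrap argument at the approximate level), because the regularity of $\mu_\ep$ and the $L^6$-bound on $\varphi_\ep$ — both needed to even make sense of $R_\ep$ — are produced only through the defining relation \eqref{nl3} together with the energy estimate. Second, and more seriously, the quartic growth of $P$ permitted by {\bf A5} sits exactly at the threshold compatible with the available compactness, since $V\embed L^6$ fails to be compact in $d=3$; hence $\lim R_\ep$ cannot be read off from a plain strong--weak pairing and requires the Lebesgue-exponent bookkeeping indicated above — the same tension that, in the uniform-in-$\ep$ analysis performed elsewhere in the paper, forces the extra hypothesis \eqref{init_nl3} on the initial data.
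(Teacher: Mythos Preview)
The paper does not give its own proof of this theorem: it simply records the statement and cites \cite[Thm.~2.3]{fornoni}. Your sketch is an honest attempt to reproduce that argument, and its overall architecture (Galerkin scheme, energy estimate by testing \eqref{nl6} with $\mu_\ep$ and \eqref{nl7} with $\sigma_\ep+\chi(1-\varphi_\ep)$, the gradient estimate on $\varphi_\ep$ via $\psi''+(J_\ep*1)>0$, then compactness) is correct and matches the strategy of the cited reference as well as the uniform estimates the present paper carries out in Section~3.1.

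Two small points. First, when the reaction terms collapse you wrote $-\int_\Omega R_\ep^2/P(\varphi_\ep)$ and said ``here $P>0$ is used''; in fact the identity is $-\int_\Omega P(\varphi_\ep)|\sigma_\ep+\chi(1-\varphi_\ep)-\mu_\ep|^2\le0$, for which $P\ge0$ from {\bf A5} suffices (strict positivity is only assumed later, in {\bf B4}). Second, your treatment of $\int_\Omega h(\varphi_\ep)u\,\mu_\ep$ by substituting \eqref{nl3} works at \emph{fixed} $\ep$ --- the $B_\ep\varphi_\ep$-piece is indeed controlled by $E_\ep(\varphi_\ep)$ via $|\langle v,B_\ep\varphi_\ep\rangle|\le C\|v\|_{L^\infty}\|J_\ep*1\|_{L^1(\Omega)}^{1/2}E_\ep(\varphi_\ep)^{1/2}$ --- but the resulting constant blows up like $\ep^{-1}$. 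The paper, in its uniform-in-$\ep$ estimates of Section~3.1, instead bounds $\int|\mu_\ep|$ through Poincar\'e--Wirtinger and the identity $(\mu_\ep)_\Omega=\tau(\partial_t\varphi_\ep)_\Omega+(\psi'(\varphi_\ep))_\Omega-\chi(\sigma_\ep)_\Omega$, which avoids any $\ep$-dependence. For the present theorem either route is fine.
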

Under suitably refined hypothesis it is possible to 
obtain strong well-posedness for the nonlocal problem.
In order to state the result in \cite{fornoni}, we introduce also the 
following strengthened assumptions.
\begin{enumerate}[start=1,label={{\bf B\arabic*}}]
  \item The potential satisfies $\psi\in C^4(\mathbb R)$ and 
  \[
  \lim_{r\to\pm\infty}(\psi'(r)-\chi^2r)=\pm\infty\,.
  \]
  \item The coefficient $\ep_0>0$ satisfies 
  \[
  0<\ep_0^2< \frac{c_{\alpha,d}|S^{d-1}|}{\chi^2+C_\psi}\,,
  \]
  so that for every $\ep\in(0,\ep_0)$ there holds 
  \[
  \psi''(r) + J_\ep*1(x) \geq 
  \frac{c_{\alpha,d}|S^{d-1}|}{\ep_0^2}-C_\psi>\chi^2
  \quad\forall\,r\in\mathbb R\,,\quad\text{for a.e.~}x\in\Omega\,.
  \]
  \item The mobilities are constant, namely $m\equiv 1$ and $n\equiv1$.

  \item The proliferation and truncation functions
  satisfy $P,h\in C^1(\mathbb R)\cap W^{1,\infty}(\mathbb R)$
  and there exists a constant $P_0>0$ such that 
  \[
  P(r)\geq P_0 \quad\forall\,r\in\mathbb R\,.
  \]
  Moreover, we set $C_h:=\|h\|_{W^{1,\infty}(\mathbb R)}$. 
  \item The initial data of the $\ep$-nonlocal problem satisfy
    \begin{equation}\label{init_nl1_str}
    \varphi_{0,\ep}\in H^2(\Omega)\,, \quad
    \quad
    \sigma_{0,\ep}\in V\cap L^\infty(\Omega)
    \qquad\forall\,\ep\in(0,\ep_0)\,.
    \end{equation}
\end{enumerate}
Examples of proliferation functions satisfying our assumptions are those described in \eqref{eq:ex-P}.
In this refined setting, the nonlocal problem is well-posed 
in the strong sense, as specified in the following result, following directly from \cite[Thm~3.5]{fornoni}.


  \begin{theorem}\label{wp_nl_strong}
  Assume {\bf A1}--{\bf A7}, {\bf B1}--{\bf B5}, 
  and let $\ep\in(0,\ep_0)$. Then, for every $(u,w)$
  satisfying \eqref{uw}, 
  there exists a unique triplet $(\varphi_\ep, \mu_\ep, \sigma_\ep)$,
  with 
\begin{align}
  \label{nl1_str}
  &\varphi_\ep\in 
  W^{1,\infty}(0,T; H)\cap H^1(0,T; H^2(\Omega))\,,\\
  \label{nl2_str}
  &\sigma_\ep\in 
  H^1(0,T; H)\cap L^\infty(0,T; V)\cap L^2(0,T; W)\,,\\
  \label{nl3_str}
  &\mu_\ep=
  \tau\varphi_\ep+
  B_\ep(\varphi_\ep)+
  \psi'(\varphi_\ep)-\chi\sigma_\ep
  \in L^\infty(0,T; W)\,,\\
  \label{nl4}
  &R_\ep:=P(\varphi_\ep)(\sigma_\ep+\chi(1-\varphi_\ep)-\mu_\ep)
  \in L^2(0,T; L^{6}(\Omega))\,,
  \end{align}
  such that
  \begin{equation}
      \label{nl5_str}
      \varphi_\ep(0)=\varphi_{0,\ep}\,,
      \quad
      \sigma_\ep(0)=\sigma_{0,\ep}\,,
  \end{equation}
  and
  \begin{align}
  \label{nl6_str}
      &\partial_t\varphi_\ep
      -\Delta\mu_\ep= 
      R_\ep-
      h(\varphi_\ep)u \quad\text{a.e.~in } Q\,,\\
  \label{nl7_str}
      &\partial_t\sigma_\ep
      -\Delta(\sigma_\ep+
      \chi(1-\varphi_\ep))=
      -R_\ep +w \quad\text{a.e.~in } Q\,.
  \end{align}
  Moreover, 
  there exists a constant $C_\ep>0$,
  depending only on the structure of the problem,
  such that, for every pair of
  initial data 
  $\{(\varphi_{0,\ep}^i, \sigma_{0,\ep}^i)\}_{i=1,2}$
  satisfying \eqref{init_nl1} and \eqref{init_nl1_str}
  and for every pair 
  $\{(u_i, w_i)\}_{i=1,2}$ satisfying \eqref{uw},
  then the corresponding strong solutions
  $\{(\varphi_\ep^i, \mu_\ep^i, \sigma_\ep^i)\}_{i=1,2}$
  to \eqref{nl1_str}--\eqref{nl7_str} satisfy
  \begin{align}
      \nonumber
      &\|\varphi_\ep^1
      -\varphi_\ep^2\|_{H^1(0,T; H)\cap L^\infty(0,T; H^2(\Omega))}
      +\|\mu_\ep^1-\mu_\ep^2\|_{L^2(0,T;W)}
      +\|\sigma_\ep^1-\sigma_\ep^2\|_{H^1(0,T; H)\cap L^2(0,T; W)}\\
      \label{cont_nl}
      &\leq
      C_\ep\left(
      \|\varphi_{0,\ep}^1
      -\varphi_{0,\ep}^2\|_{H^2(\Omega)}
      +\|\sigma_{0,\ep}^1
      -\sigma_{0,\ep}^2\|_V
      +\|u_1-u_2\|_{L^2(0,T; H)} + \|w_1-w_2\|_{L^2(0,T; H)}
      \right)\,.
  \end{align}
\end{theorem}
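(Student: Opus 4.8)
This is \cite[Theorem~3.5]{fornoni}; one only checks that {\bf A1}--{\bf A7} and {\bf B1}--{\bf B5} imply the hypotheses used there. For completeness we sketch the strategy. The plan is to regularise the state system, derive a layered family of a-priori estimates (with $\ep>0$ fixed throughout), pass to the limit by compactness, and then run a contraction argument for uniqueness and for the Lipschitz estimate \eqref{cont_nl}. Concretely, for $\lambda>0$ one replaces $\psi'$ by a smooth Lipschitz truncation $\psi'_\lambda$ compatible with {\bf A2}--{\bf B1}, possibly adding a Faedo--Galerkin discretisation in space, and obtains approximate solutions $(\varphi_\lambda,\mu_\lambda,\sigma_\lambda)$ by ODE theory; the heart of the matter is to estimate these uniformly in $\lambda$.

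The estimates come in two layers. \emph{Energy layer:} testing the $\varphi$-equation by $\mu_\ep$, the identity defining $\mu_\ep$ by $\partial_t\varphi_\ep$, and the $\sigma$-equation by $\sigma_\ep+\chi(1-\varphi_\ep)$, then summing and using the coercivity of $J_\ep*1+\psi''$ from {\bf B2} together with $\chi<\sqrt{c_\psi}$ ({\bf A3}) to absorb the chemotaxis cross-terms, yields $\varphi_\ep\in H^1(0,T;H)\cap L^\infty(0,T;V)$, $\sigma_\ep\in L^\infty(0,T;H)\cap L^2(0,T;V)$, $\mu_\ep\in L^2(0,T;V)$ and $\psi(\varphi_\ep)\in L^\infty(0,T;L^1(\Omega))$, hence $\psi'(\varphi_\ep)\in L^2(0,T;H)$ via {\bf A2}. \emph{Strong layer:} here {\bf B3} ($m\equiv n\equiv1$) and, crucially, {\bf B4} ($P\ge P_0>0$) enter: from $R_\ep=P(\varphi_\ep)(\sigma_\ep+\chi(1-\varphi_\ep)-\mu_\ep)$ one writes $\mu_\ep=\sigma_\ep+\chi(1-\varphi_\ep)-R_\ep/P(\varphi_\ep)$, so controlling $R_\ep$ controls $\mu_\ep$ without inverting $-\Delta$; and $R_\ep$ is controlled by testing the $\varphi$-equation by $\partial_t\varphi_\ep$, the viscous term $\tau\partial_t\varphi_\ep$ inside $\mu_\ep$ producing $\tau\|\partial_t\varphi_\ep\|_H^2$. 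Differentiating the $\varphi$-equation in time (or testing by $-\Delta\mu_\ep$ and $-\Delta\partial_t\varphi_\ep$), combining with parabolic regularity for the $\sigma$-equation, the Sobolev embeddings valid in $d\le3$, and {\bf B1}, {\bf B4}--{\bf B5} to handle the nonlinearities, one obtains the asserted regularity $\varphi_\ep\in W^{1,\infty}(0,T;H)\cap H^1(0,T;H^2(\Omega))$, $\sigma_\ep\in H^1(0,T;H)\cap L^\infty(0,T;V)\cap L^2(0,T;W)$, $\mu_\ep\in L^\infty(0,T;W)$ and $R_\ep\in L^2(0,T;L^6(\Omega))$, the constants depending on $\ep$ only through $\|B_\ep\|_{\mathcal L(H)}$, finite by {\bf A1}.

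One then passes to the limit $\lambda\to0$ via Aubin--Lions--Simon compactness — strong $H$-convergence of $\varphi_\ep$ being enough to identify $\psi'$, $P$ and $h$ — which gives existence. For uniqueness and \eqref{cont_nl}, one takes two strong solutions, subtracts the equations, tests the difference of the $\varphi$-equations by $\partial_t(\varphi_\ep^1-\varphi_\ep^2)$ and by $\mu_\ep^1-\mu_\ep^2$ and the difference of the $\sigma$-equations by $\sigma_\ep^1-\sigma_\ep^2$, uses the $W^{1,\infty}$ bounds on $P,h$ from {\bf B4}--{\bf B5} together with the regularity just proved to absorb the right-hand sides, and closes by Gronwall's lemma; again $C_\ep$ depends on $\ep$ through $\|B_\ep\|_{\mathcal L(H)}$.

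The main obstacle is the strong layer: the chemotaxis coupling (the term $\chi\Delta\varphi_\ep$ in the $\sigma$-equation and $-\chi\sigma_\ep$ in $\mu_\ep$) forces the $\varphi_\ep$- and $\sigma_\ep$-estimates to be closed simultaneously, while $R_\ep$ is genuinely nonlinear in all three unknowns. The hypothesis $P\ge P_0>0$ is exactly what makes $\mu_\ep$ recoverable algebraically from $R_\ep$, compensating for the absence of a direct second-order elliptic estimate on $\varphi_\ep$: here the Dirichlet term is replaced by the order-zero operator $B_\ep$, so the identity defining $\mu_\ep$ does not by itself gain two spatial derivatives on $\varphi_\ep$.
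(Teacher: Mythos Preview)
Your proposal is correct and matches the paper's treatment: the paper does not prove this theorem at all but simply records it as ``following directly from \cite[Thm.~3.5]{fornoni}'', which is exactly your opening line. The additional sketch you supply is a reasonable outline of how that cited result is obtained, but the paper itself offers no such sketch.
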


\section{Nonlocal-to-local convergence of the state systems}
\label{sec:state}
The first main result of this paper is the convergence of the nonlocal system \eqref{eq1:nl}--\eqref{eq3:nl}
to the local one \eqref{eq1:l}--\eqref{eq3:l}, as $\ep\searrow0$. This shows, as a byproduct, existence of weak solutions for the local problem.

\begin{theorem}[Convergence of the state systems]
  \label{th:conv}
  Assume {\bf A1}--{\bf A7}, let 
  $(u,w)$ and $\{(u_\ep,w_\ep)\}_{\ep\in(0,\ep_0)}$ satisfy 
  \eqref{uw} and 
  \begin{align*}
  u_\ep\wstarto u \quad&\text{in } L^\infty(Q)\cap H^1(0,T; H)\,,\\
  w_\ep\wstarto w \quad&\text{in } L^\infty(Q)\,.
  \end{align*}
  Let also $\{(\varphi_\ep, \mu_\ep, \sigma_\ep)\}_{\ep\in(0,\ep_0)}$ 
  be a family of weak solutions to the nonlocal system
  in the sense of Theorem~\ref{th:wp_nl}
  corresponding to the family $\{(u_\ep,w_\ep)\}_{\ep\in(0,\ep_0)}$.
  Then, there exists a triplet 
  $(\varphi,\mu,\sigma)$, with
  \begin{align}
  \label{loc_first}
  &\varphi\in 
  H^1(0,T; H)\cap L^\infty(0,T; V)
  \cap L^2(0,T; W)\,,\\
  &\sigma \in H^1(0,T; V^*)
  \cap L^2(0,T; V)\,,\\
  &\mu=
  \tau\partial_t\varphi
  -\Delta\varphi+
  \psi'(\varphi)-\chi\sigma
  \in L^2(0,T; V)\,,\\
  &R:=P(\varphi)(\sigma+\chi(1-\varphi)-\mu)
  \in L^2(0,T; L^{6/5}(\Omega))\,,
  \end{align}
  such that 
  \begin{equation}
  \varphi(0)=\varphi_{0}\,, \qquad
  \sigma(0)=\sigma_{0}\,,
  \end{equation}
  and
  \begin{align}
      &\nonumber\langle\partial_t\varphi,
      \zeta\rangle
      +\int_\Omega
      m(\varphi)\nabla\mu\cdot
      \nabla\zeta\, \xd x = 
      \int_\Omega
      R\zeta\,\xd x
      -\int_\Omega
      h(\varphi)u\zeta\,\xd x,\\
      &\langle\partial_t\sigma,
      \zeta\rangle +
      \int_\Omega n(\varphi)
      \nabla(\sigma+
      \chi(1-\varphi))\cdot
      \nabla\zeta\,\xd x=
      -\int_\Omega
      R\zeta\,\xd x
      +\int_\Omega
      w\zeta\,\xd x,
      \label{loc_last}
  \end{align}
  almost everywhere in $(0,T)$,
  for every $\zeta\in V$. Moreover,
  as $\ep\searrow0$ there holds
  \begin{align}
  \label{conv_loc_first}
      &\varphi_\ep \to\varphi
      \quad\text{in } C^0([0,T]; H)\,,\\
      &\varphi_\ep\wstarto\varphi
      \quad\text{in }
      H^1(0,T; H)\cap 
      L^\infty(0,T; L^6(\Omega))\,,\\
      &\mu_\ep\rightharpoonup\mu 
      \quad\text{in } L^2(0,T; V)\,,\\
      &\sigma_\ep\to\sigma 
      \quad\text{in }
      L^2(0,T; H)\cap C^0([0,T]; V^*)\,,\\
      &\sigma_\ep\wstarto\sigma 
      \quad\text{in }
      H^1(0,T; V^*)\cap L^\infty(0,T; H)\,,\\
      &\sigma_\ep+\chi(1-\varphi_\ep)\rightharpoonup
      \sigma+\chi(1-\varphi) \quad\text{in }
      L^2(0,T; V)\,.
  \label{conv_loc_last}
  \end{align}
  Additionally, if also {\bf B3}--{\bf B4} are satisfied, then 
  $(\varphi,\mu,\sigma)$
  is the unique weak solution to the local system in the 
  sense of \eqref{loc_first}--\eqref{loc_last}, there holds 
  \[
  \varphi\in L^2(0,T; H^3(\Omega))\,,
  \]
  and we also have
  \[
  \sigma_\ep(t)\to\sigma(t) \quad\text{in } H \quad\forall\,t\in[0,T]\,.
  \]
  Moreover, for every pair of initial data 
  $\{(\varphi_{0}^i, \sigma_{0}^i)\}_{i=1,2}$
  satisfying \eqref{init_l} and for every pair 
  $\{(u_i, w_i)\}_{i=1,2}$ satisfying \eqref{uw},
  there exists a constant $C>0$ such that
  the respective weak solutions $(\varphi_1,\mu_1,\sigma_1)$ and 
  $(\varphi_2,\mu_2,\sigma_2)$ to the local system in the 
  sense of \eqref{loc_first}--\eqref{loc_last} 
  satisfy
  \begin{align}
      \nonumber
      &\|\varphi_1-\varphi_2\|_{H^1(0,T; H)\cap L^\infty(0,T; V)}
      +\|\mu_1-\mu_2\|_{L^2(0,T; V)}
      +\|\sigma_1-\sigma_2\|_{L^\infty(0,T; H)\cap L^2(0,T; V)}\\
      \label{cont_dep_loc}
      &\leq C\left(\|\varphi_{0}^1-\varphi_{0}^2\|_V
      +\|\sigma_{0}^1-\sigma_{0}^2\|_H+
      \|u_1-u_2\|_{L^2(0,T; L^{6/5}(\Omega))}
      +\|w_1-w_2\|_{L^2(0,T; L^{6/5}(\Omega))}\right).
  \end{align}
\end{theorem}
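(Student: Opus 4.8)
The plan is to establish $\ep$-independent a priori estimates on the nonlocal solutions, then extract limits and identify them with a weak solution of the local problem, then upgrade regularity and prove uniqueness under {\bf B3}--{\bf B4}. I would begin by testing the equations with the natural energy-type multipliers. Testing \eqref{nl6} by $\mu_\ep$, \eqref{nl7} by $\sigma_\ep+\chi(1-\varphi_\ep)$, and adding, the chemotaxis cross-terms combine and, exploiting the nonlocal energy identity $\frac{\xd}{\xd t}E_\ep(\varphi_\ep)=\int_\Omega B_\ep(\varphi_\ep)\partial_t\varphi_\ep$ together with the definition $\mu_\ep=\tau\varphi_\ep+B_\ep(\varphi_\ep)+\psi'(\varphi_\ep)-\chi\sigma_\ep$, produces a differential inequality for $\mathcal E_\ep(\varphi_\ep,\sigma_\ep):=E_\ep(\varphi_\ep)+\int_\Omega(\psi(\varphi_\ep)+\tfrac12|\sigma_\ep|^2+\chi\sigma_\ep(1-\varphi_\ep))$. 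The viscosity term gives a further $\tau\|\partial_t\varphi_\ep\|_H^2$ contribution; the reaction term $R_\ep$ is estimated using {\bf A5} ($P$ with quartic growth) together with the $L^6$-control on $\varphi_\ep$ coming from $\psi(r)\geq c_\psi|r|^2$ and from the uniform bound \eqref{init_nl3} on $\tau\|\varphi_{0,\ep}\|_{L^6}^6$; the condition $\chi<\sqrt{c_\psi}$ in {\bf A3} makes the energy coercive in $(\varphi_\ep,\sigma_\ep)$. Combined with the uniform bound on $E_\ep(\varphi_{0,\ep})$ and a Grönwall argument, this yields
\[
\|\varphi_\ep\|_{H^1(0,T;H)\cap L^\infty(0,T;L^6(\Omega))}
+\sup_t E_\ep(\varphi_\ep(t))
+\|\sigma_\ep\|_{L^\infty(0,T;H)}
+\|\nabla\mu_\ep\|_{L^2(Q)}
+\|\nabla(\sigma_\ep+\chi(1-\varphi_\ep))\|_{L^2(Q)}
\le C.
\]
To control $\mu_\ep$ in $L^2(0,T;V)$ I would test \eqref{nl6} by the constant $1/|\Omega|$ to estimate the mean $(\mu_\ep)_\Omega$ (using the bound on $R_\ep$, which in turn needs $\|\psi'(\varphi_\ep)\|$ controlled via $|\psi'|\le M_\psi(1+\psi)$), and combine with Poincaré--Wirtinger. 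The time derivative $\partial_t\sigma_\ep$ is bounded in $L^2(0,T;V^*)$ by comparison in \eqref{nl7}.

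Next I would pass to the limit. By Aubin--Lions and the bounds above, up to subsequences $\varphi_\ep\to\varphi$ in $C^0([0,T];H)$ and a.e. in $Q$, $\sigma_\ep\to\sigma$ in $L^2(0,T;H)\cap C^0([0,T];V^*)$, with the weak-$*$ convergences in the stated spaces; continuity of $m,n,h,P$ plus a.e. convergence and dominated/Vitali convergence handle $m(\varphi_\ep)\nabla\mu_\ep\rightharpoonup m(\varphi)\nabla\mu$ etc. The one genuinely nonlocal point is identifying the limit of $B_\ep(\varphi_\ep)$: here I would invoke the interpolation and Gamma-convergence-type lemmas of \cite{DST2} — the uniform bound $\sup_t E_\ep(\varphi_\ep(t))\le C$ together with $\varphi_\ep\to\varphi$ in $L^2$ forces $\varphi\in L^\infty(0,T;V)$ with $\|\nabla\varphi\|\le\liminf$, and, combined with the uniform $L^2(0,T;V)$ bound on $\mu_\ep=\tau\varphi_\ep+B_\ep(\varphi_\ep)+\psi'(\varphi_\ep)-\chi\sigma_\ep$, yields $\varphi\in L^2(0,T;W)$ with $B_\ep(\varphi_\ep)\rightharpoonup-\Delta\varphi$ in $L^2(0,T;H)$ and $\mu\rightharpoonup\tau\partial_t\varphi-\Delta\varphi+\psi'(\varphi)-\chi\sigma$. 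For the term $\psi'(\varphi_\ep)$ I would use the bound $|\psi'(r)|\le M_\psi(1+\psi(r))$ with equi-integrability of $\psi(\varphi_\ep)$ (uniform $\sup_t\int\psi(\varphi_\ep(t))\le C$) to get strong $L^1$, then $\psi'(\varphi_\ep)\to\psi'(\varphi)$ by Vitali; passing to the limit in \eqref{nl6}--\eqref{nl7} with fixed test function $\zeta\in V$ then gives \eqref{loc_first}--\eqref{loc_last}. The initial conditions pass to the limit by \eqref{init_nl2} and the $C^0$-convergences. The convergence $\sigma_\ep+\chi(1-\varphi_\ep)\rightharpoonup\sigma+\chi(1-\varphi)$ in $L^2(0,T;V)$ is immediate from the gradient bound.

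For the final part, under {\bf B3}--{\bf B4} the mobilities are constant so the local system is a viscous Cahn-Hilliard equation with an extra Laplacian; testing the (now semilinear) $\varphi$-equation by $B\mu$ or differentiating and using elliptic regularity on $-\Delta\varphi=\mu-\tau\partial_t\varphi-\psi'(\varphi)+\chi\sigma$ with the $L^2(0,T;V)$ bound on $\mu$ and $H^1(0,T;H)$ bound on $\varphi$ gives $\varphi\in L^2(0,T;H^3(\Omega))$ by a bootstrap. Uniqueness and the continuous dependence estimate \eqref{cont_dep_loc} I would obtain by the standard contraction argument: write the system for the difference of two solutions, test the $\varphi$-difference equation by $B^{-1}(\varphi_1-\varphi_2)$ (after subtracting means — using that the mean is controlled by the $L^{6/5}$ data) or directly by $\mu_1-\mu_2$, test the $\sigma$-difference equation by $\sigma_1-\sigma_2$, use $\psi''\ge-C_\psi$ to absorb the potential term, $\chi<\sqrt{c_\psi}$ for coercivity, Lipschitz-type control of $h$ and $P$ (here {\bf B4} gives $P,h\in W^{1,\infty}$), and close with Grönwall. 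Finally $\sigma_\ep(t)\to\sigma(t)$ in $H$ for every $t$ follows by combining $\sigma_\ep\to\sigma$ in $C^0([0,T];V^*)$ with $\sigma_\ep\rightharpoonup\sigma$ in $L^\infty(0,T;H)$ and an energy-identity/lower-semicontinuity argument — showing $\limsup_\ep\|\sigma_\ep(t)\|_H\le\|\sigma(t)\|_H$ via the (in the limit) energy balance, which requires the strong convergences already obtained to pass to the limit in the dissipation terms.

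The main obstacle I anticipate is twofold: first, obtaining the uniform $L^2(0,T;V)$ bound on $\mu_\ep$ (equivalently, controlling $R_\ep$ and the mean of $\mu_\ep$) despite only quartic growth of $P$ and merely $L^\infty(0,T;L^6)$ control of $\varphi_\ep$ — this is why $\tau>0$ and the extra $L^6$-bound \eqref{init_nl3} on the initial data are assumed, and the estimate of $R_\ep$ in $L^2(0,T;L^{6/5})$ must be done carefully with Hölder. Second, the rigorous identification $B_\ep(\varphi_\ep)\rightharpoonup-\Delta\varphi$ together with the gain of $W$-regularity for $\varphi$ in the limit, which is exactly the nonlocal-to-local core and must be imported verbatim from the interpolation machinery of \cite{DST2}; the coupling terms (chemotaxis, reaction, $h(\varphi)u$) then only require soft compactness arguments on top of this.
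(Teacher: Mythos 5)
Your overall strategy matches the paper's: energy estimate from testing by $\mu_\ep$, (effectively) $-\partial_t\varphi_\ep$, and $\sigma_\ep+\chi(1-\varphi_\ep)$; control of the mean of $\mu_\ep$; identification of $B_\ep(\varphi_\ep)\rightharpoonup -\Delta\varphi$ and of $\psi'(\varphi)$ by the DST2 interpolation/$\Gamma$-convergence lemmas and maximal-monotonicity; Vitali-type arguments for $m,n,P,h$; lower-semicontinuity and the energy balance for $\sigma_\ep(t)\to\sigma(t)$ in $H$; a contraction argument for uniqueness. However, two concrete technical steps as you describe them would not work as written.

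First, you propose to control $(\mu_\ep)_\Omega$ by testing the $\varphi$-equation \eqref{nl6} by the constant $1/|\Omega|$. This gives only
$\partial_t(\varphi_\ep)_\Omega=(R_\ep)_\Omega-(h(\varphi_\ep)u_\ep)_\Omega$, since the divergence term vanishes; no term $(\mu_\ep)_\Omega$ survives. Extracting it from $(R_\ep)_\Omega$ would require a uniform lower bound on $P$, which is not available under {\bf A5} alone (only under {\bf B4}). The correct argument is instead to average the algebraic $\mu$-equation \eqref{eq2:nl}: using the symmetry of $J_\ep$, the nonlocal term $B_\ep(\varphi_\ep)$ has zero mean, so
\[
(\mu_\ep)_\Omega=\tau(\partial_t\varphi_\ep)_\Omega+(\psi'(\varphi_\ep))_\Omega-\chi(\sigma_\ep)_\Omega\,,
\]
and then $|\psi'|\le M_\psi(1+\psi)$ together with the bound on $\|\psi(\varphi_\ep)\|_{L^\infty(L^1)}$ and $\|\partial_t\varphi_\ep\|_{L^2(H)}$ closes the estimate. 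You had the right ingredient ($|\psi'|\le M_\psi(1+\psi)$) applied to the wrong equation.

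Second, the $L^\infty(0,T;L^6(\Omega))$ bound on $\varphi_\ep$ does \emph{not} drop out of the energy estimate: $\psi(r)\ge c_\psi|r|^2$ only yields an $L^\infty(0,T;H)$ bound, and $\sup_t E_\ep(\varphi_\ep(t))$ does not give $L^6$ before passing to the limit (there is no uniform $V$-bound at the $\ep$-level). One needs an additional estimate: testing \eqref{eq2:nl} by $\gamma_p(\varphi_\ep)=|\varphi_\ep|^{p-2}\varphi_\ep$ for $p\in[2,6]$, using the monotonicity of $\gamma_p$ to discard $\int B_\ep(\varphi_\ep)\gamma_p(\varphi_\ep)\ge 0$, the sign information from $\psi'(r)r\ge c_\psi|r|^2-c_\psi^{-1}$, and absorbing the $\mu_\ep+\chi\sigma_\ep$ term via $V\hookrightarrow L^6$. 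This is precisely where the uniform $\tau\|\varphi_{0,\ep}\|_{L^6}^6$ bound in \eqref{init_nl3} is used, to control $\frac{\tau}{p}\int_\Omega|\varphi_{0,\ep}|^p$ on the right-hand side. As written, your proposal treats the $L^6$-control as automatic, which is a genuine gap, and this bound is indispensable downstream (for $\|\psi'(\varphi_\ep)\|_{L^2(0,T;H)}$, for $\|R_\ep\|_{L^2(0,T;L^{6/5})}$, and for identifying $P(\varphi)$ in the limit). A related inaccuracy: in the energy estimate $R_\ep$ itself contributes with a good sign, $-\int P(\varphi_\ep)|\sigma_\ep+\chi(1-\varphi_\ep)-\mu_\ep|^2\le 0$; it does not require the quartic growth of $P$ or the $L^6$ bound at that stage.

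Beyond these, minor remarks: the paper obtains $\|B_\ep(\varphi_\ep)\|_{L^2(0,T;H)}$ by directly testing \eqref{eq2:nl} by $B_\ep(\varphi_\ep)$ (using monotonicity of $\psi'+C_\psi I$); your route via comparison can be made to work once the $L^6$ bound and hence $\psi'(\varphi_\ep)\in L^\infty(0,T;H)$ are in hand, so the two are not far apart. For uniqueness, the paper tests the $\varphi$-difference by $\hat\mu$ and the $\mu$-difference by $\partial_t\hat\varphi$, rather than $B^{-1}$; since the $\varphi$-equation is not mass-conserving here, the $B^{-1}$ approach would require additional care with the mean, so the paper's choice is more convenient.
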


The rest of this section is devoted to the proof of Theorem~\ref{th:conv}.
\subsection{Uniform estimates}
We test \eqref{eq1:nl} by $\mu_\ep$,
\eqref{eq2:nl} by $-\partial_t\varphi_\ep$,
\eqref{eq3:nl} by $\sigma_\ep+\chi(1-\varphi_\ep)$, and sum all contributions.
Integrating by parts in time, we obtain
\begin{align*}
    &\int_{Q_t}m(\varphi_\ep)|\nabla\mu_\ep|^2\,\xd x\xd s+
    \tau\int_{Q_t}|\partial_t\varphi_\ep|^2\,\xd x \xd s+
    E_\ep(\varphi_\ep(t))+\int_\Omega\psi(\varphi_\ep(t))\,\xd x\\
    &\qquad+\frac{1}{2}\int_\Omega |\sigma_\ep(t)|^2\,\xd x
    +\int_{Q_t}n(\varphi_\ep)|\nabla(\sigma_\ep+\chi(1-\varphi_\ep)|^2\,\xd x \xd s\\
    &= E_\ep(\varphi_{0,\ep})+
    \int_\Omega\psi(\varphi_{0,\ep})\,\xd x
    +\frac{1}{2}\int_\Omega |\sigma_{0,\ep}|^2\,\xd x
    +\int_{Q_t}(R_\ep-h(\varphi_\ep)u_\ep) \mu_\ep\,\xd x \xd s\\ 
    &\qquad+\int_{Q_t}(-R_\ep+w_\ep)
    [\sigma_\ep+\chi(1-\varphi_\ep)]\,\xd x \xd s
    +\chi\int_{Q_t}\sigma_\ep\partial_t \varphi_\ep\,\xd x \xd s-\chi\int_{Q_t}\partial_t \sigma_\ep(1-\varphi_\ep)\,\xd x \xd s\\
    &= E_\ep(\varphi_{0,\ep})
    +\int_\Omega\psi(\varphi_{0,\ep})\,\xd x
    +\frac{1}{2}\int_\Omega |\sigma_{0,\ep}|^2\,\xd x 
    -\int_{Q_t}P_\ep(\varphi)
    |\sigma_\ep+\chi(1-\varphi_\ep)-\mu_\ep|^2\,\xd x \xd s\\
    &\qquad-\int_{Q_t}h(\varphi_\ep)u_\ep \mu_\ep\,\xd x \xd s
    +\int_{Q_t}w_\ep
    [\sigma_\ep+\chi(1-\varphi_\ep)]\,\xd x \xd s\\
    &\qquad-\chi\int_\Omega \sigma_\ep(t)(1-\varphi_\ep(t))\,\xd x+
    \chi\int_\Omega \sigma_{0,\ep}(1-\varphi_{0,\ep})\,\xd x.
\end{align*}
Now, on the left-hand side we use
assumptions {\bf A2}, while to estimate the right-hand side
we rely on assumption {\bf A7}, the fact that $(u_\ep)_\ep$ and
$(w_\ep)_\ep$ are uniformly bounded in $L^\infty(Q)$, and
Young inequality. We infer 
\begin{align*}
    &\int_{Q_t}m(\varphi_\ep)|\nabla\mu_\ep|^2\,\xd x\xd s
    +\tau\int_{Q_t}|\partial_t\varphi_\ep|^2\,\xd x\xd s+
    E_\ep(\varphi_\ep(t))+
    c_\psi\int_\Omega|\varphi_\ep(t)|^2\,\xd x
    \\
    &\qquad
    +\frac{1}{2}\int_\Omega |\sigma_\ep(t)|^2\,\xd x
    +\int_{Q_t}n(\varphi_\ep)|\nabla(\sigma_\ep+\chi(1-\varphi_\ep)|^2
    \,\xd x\xd s\\
    &\qquad+\int_{Q_t}P(\varphi_\ep)|\sigma_\ep+
    \chi(1-\varphi_\ep)-\mu_\ep|^2\,\xd x\xd s\\
    &\leq \frac{|\Omega|}{c_\psi} 
    +E_\ep(\varphi_{0,\ep})+
    \int_\Omega\psi(\varphi_{0,\ep})\,\xd x
    +\frac{1}{2}\int_\Omega |\sigma_{0,\ep}|^2\,\xd x
    +M\int_{Q_t}(|\mu_\ep|+|\varphi_\ep|^2+|\sigma_\ep^2|)\,\xd x\xd s\\
    &\qquad+ M 
    -\chi\int_\Omega \sigma_\ep(t)(1-\varphi_\ep(t))\,\xd x+
    \chi\int_\Omega \sigma_{0,\ep}(1-\varphi_{0,\ep})\,\xd x\\
    &\leq \frac{|\Omega|}{c_\psi} 
    +E_\ep(\varphi_{0,\ep})+
    \int_\Omega\psi(\varphi_{0,\ep})\,\xd x
    +\int_\Omega |\sigma_{0,\ep}|^2\,\xd x
    +M\int_{Q_t}(|\mu_\ep|+|\varphi_\ep|^2+|\sigma_\ep^2|)\,\xd x\xd s\\
    &\qquad+ M 
    +\chi^2\int_\Omega|\varphi_{0,\ep}|^2\,\xd x
    +\chi^2|\Omega|
    +\frac{\delta}{2}\int_\Omega|\sigma_\ep(t)|^2\,\xd x
    +\frac{\chi^2}{\delta}\int_\Omega|\varphi_\ep(t)|^2\,\xd x
    +\frac{\chi^2}{\delta}|\Omega|\,,
\end{align*}
where $M>0$ and $\delta>0$ are constants and independent of $\ep$. 
Now, since $\chi^2<c_\psi$ by assumption
{\bf A3}, it is possible 
to choose $\delta$ sufficiently small such that the two terms 
on the right-hand side corresponding to $\sigma_\ep(t)$ and
$\varphi_\ep(t)$ can be incorporated in the left-hand side.
Indeed, it is enough to choose $\delta>0$ such that 
\[
  \frac\delta2<\frac12 \qquad\text{and}\qquad
  \frac{\chi^2}\delta<c_\psi\,,
\]
which is possible if and only if 
$\frac{\chi^2}{c_\psi}<1$.

Recalling also assumptions {\bf A4} and {\bf A7}, and using the 
Poincar\'e inequality on the term in $\mu_\ep$ on the right-hand side,
we infer the existence of a positive constant 
$M$, independent of $\ep$, such that
\begin{align*}
    &\int_{Q_t}|\nabla\mu_\ep|^2\,\xd x\xd s
    +\int_{Q_t}|\partial_t\varphi_\ep|^2\,\xd x\xd s+
    E_\ep(\varphi_\ep(t))+
    \int_\Omega|\varphi_\ep(t)|^2\,\xd x
    +\int_\Omega |\sigma_\ep(t)|^2\,\xd x
    \\
    &\qquad
    +\int_{Q_t}|\nabla(\sigma_\ep+\chi(1-\varphi_\ep)|^2
    \,\xd x\xd s
    +\int_{Q_t}P(\varphi_\ep)|\sigma_\ep+
    \chi(1-\varphi_\ep)-\mu_\ep|^2\,\xd x\xd s\\
    &\leq M
    \left(1+\int_0^t|(\mu_\ep)_\Omega|\,\xd s+
    \int_{Q_t}(|\varphi_\ep|^2+|\sigma_\ep^2|)\,\xd x\xd s\right)\,.
\end{align*}
Now, by the symmetry of the kernel $J_\ep$, arguing by comparison in equation \eqref{eq2:nl} we find 
\[
  (\mu_\ep)_\Omega = \tau(\partial_t\varphi_\ep)_\Omega
  +(\psi'(\varphi_\ep))_\Omega - \chi(\sigma_\ep)_\Omega\,,
\]
so that assumption {\bf A2} and the Young inequality 
ensure that, for every $\delta>0$ there exists a suitable constant $M_\delta>0$ such that,
\[
  \int_0^t|(\mu_\ep)_\Omega|\,\xd s
  \leq \delta\tau^2\int_{Q_t}|\partial_t\varphi_\ep|^2+
  M_\delta\int_{Q_t}
  \left(1+|\sigma_\ep|^2 + |\psi(\varphi_\ep)|\right)\,\xd x\xd s\,.
\]
Putting everything together, choosing $\delta$ small enough, 
and using the Gronwall lemma
yields then
the following estimates:
\begin{align}
    \label{eq:est1}
    \Vert E_\ep(\varphi_\ep)\Vert_{L^\infty(0,T)}+\Vert \psi(\varphi_\ep)\Vert_{L^\infty(0,T;L^1(\Omega)}
    +\|\varphi_\ep\|_{H^1(0,T; H)}^2&\leq M\\
    \label{eq:est2}
    \Vert\nabla\mu_\ep\Vert_{L^2(0,T; H)}
    &\leq M\\
    \label{eq:est3}
    \Vert\nabla(\sigma_\ep +\chi(1-\varphi_\ep))\Vert_{L^2(0,T; H)}
    &\leq M\\
    \label{eq:est4}
    \Vert\sigma_\ep\Vert_{L^\infty(0,T;H)} &\leq M\,.
\end{align}
It follows in particular, using again 
assumption {\bf A2}, 
the Poincar\'e-Wirtinger 
inequality, and comparison in \eqref{eq2:nl}, that
\begin{align}
    \label{eq:est5}
  \Vert\varphi_\ep\Vert_{L^\infty(0,T;H)}
  +\|\mu_\ep\|_{L^2(0,T; V)}
  +\|\sigma_\ep 
    +\chi(1-\varphi_\ep)\|_{L^2(0,T; V)}
  &\leq M\,.
\end{align}

At this point, we test equation \eqref{eq2:nl} by $B_\ep(\varphi_\ep)$ and integrate in time, obtaining
\begin{align*}
    &\tau E_\ep(\varphi_\ep(t)) + 
    \int_{Q_t}|B_\ep(\varphi_\ep)|^2\,\xd x \xd s +
    \int_{Q_t}\psi'(\varphi_\ep)B_\ep(\varphi_\ep)\,\xd x \xd s\\
    &\qquad=\tau E_\ep(\varphi_{0,\ep})
    + \int_{Q_t}(\mu_\ep+\chi\sigma_\ep)B_\ep(\varphi_\ep)\,\xd x \xd s.
\end{align*}
Recalling that 
$r\mapsto \psi'(r) + C_\psi r$ is nondecreasing on $\mathbb{R}$
thanks to assumption {\bf A2} on $\psi''$, we have
\begin{align*}
    &\int_{Q_t}\psi'(\varphi_\ep)B_\ep(\varphi_\ep)\,\xd x \xd s\\
    &=\int_0^t\int_{\Omega\times\Omega}
    J_\ep(x-y)(\psi'(\varphi_\ep(s,x))-\psi'(\varphi_\ep(s,y)))
    (\varphi_\ep(s,x)-\varphi_\ep(s,y))\,\xd x\,\xd y\,\xd s\\
    &\geq-C_\psi\int_0^t\int_{\Omega\times\Omega}
    J_\ep(x-y)|\varphi_\ep(s,x)-\varphi_\ep(s,y)|^2\,\xd x\,\xd y\,\xd s
    =-4C_\psi \int_0^t E_\ep(\varphi_\ep(s))\,\xd s\,. 
\end{align*}
Putting this information together, using Young inequality,
assumption {\bf A7},
and estimates \eqref{eq:est1}, \eqref{eq:est4} and \eqref{eq:est5}, 
we deduce that
\begin{align*}
    \tau E_\ep(\varphi_\ep(t)) + 
    \int_{Q_t}|B_\ep(\varphi_\ep)|^2\,\xd x \xd s
    \leq M + \frac12\int_{Q_t}|B_\ep(\varphi_\ep)|^2\,\xd x \xd s
\end{align*}
for a certain constant $M>0$ independent of $\ep$.
It follows that 
\begin{equation}
    \label{eq:est8}
    \Vert B_\ep(\varphi_\ep)\Vert_{L^2(0,T;H)}\leq M\,,
\end{equation}
and hence, by comparison in \eqref{eq2:nl},
\begin{equation}
    \label{eq:est9}
    \Vert\psi'(\varphi_\ep)\Vert_{L^2(0,T;H)}\leq M\,.
\end{equation}

We now show a further integrability estimate
on $(\varphi_\ep)_\ep$. For every $p\in[2,6]$,
we test equation \eqref{eq2:nl}
by $\gamma_p(\varphi_\ep):=
|\varphi_\ep|^{p-2}\varphi_\ep$, and we integrate in time:
using assumption {\bf A2} on the left-hand side 
we obtain
\begin{align*}
    &\frac\tau p\int_\Omega|\varphi_\ep(t)|^p\, \xd x+
    \int_{Q_t}B_\ep(\varphi_\ep)\gamma_p(\varphi_\ep)\,\xd x \xd s
    +c_\psi\int_{Q_t}|\varphi_\ep|^{p}\, \xd x \xd s
    -c_\psi^{-1}\int_{Q_t}|\varphi_\ep|^{p-2}\,\xd x \xd s\\
    &\leq
    \frac\tau p\int_\Omega|\varphi_{0,\ep}|^p\,\xd x+
    \int_{Q_t}(\mu_\ep+\chi\sigma_\ep)\gamma_p(\varphi_\ep)\,\xd x \xd s.
\end{align*}
Now, since $\gamma_p$ is nondecreasing, we have
\begin{align*}
    &\int_{Q_t}B_\ep(\varphi_\ep)\gamma_p(\varphi_\ep)\,\xd x \xd s\\
    &=
    \int_0^t\int_{\Omega\times\Omega}
    J_\ep(x-y)(\gamma_p(\varphi_\ep(s,x))-
    \gamma_p(\varphi_\ep(s,y)))
    (\varphi_\ep(s,x)-\varphi_\ep(s,y))\,\xd x
    \,\xd y\,\xd s\geq0\,.
\end{align*}
Hence, using
H\"older and Young inequalities we infer that,
for every $\delta_1,\delta_2>0$,
\begin{align*}
    &\frac\tau p\sup_{t\in[0,T]}\int_\Omega|\varphi_\ep(t)|^p\,\xd x
    +c_\psi\int_{Q}|\varphi_\ep|^{p}\,\xd x \xd t\\
    &\leq
    \frac\tau p\int_\Omega|\varphi_{0,\ep}|^p\,\xd x+
    c_\psi^{-1}\int_{Q}|\varphi_\ep|^{p-2}\,\xd x \xd t+
    \int_0^T\|\mu_\ep(s)
    +\chi\sigma_\ep(s)\|_{L^p(\Omega)}
    \|\varphi_\ep(s)\|^{p-1}_{L^p(\Omega)}\,\xd s\\
    &\leq\frac\tau p\int_\Omega|\varphi_{0,\ep}|^p\,\xd x+
    \delta_1^{\frac{p}{p-2}}\frac{p-2}{p}\int_Q|\varphi_\ep|^p\,\xd x \xd t
    +\frac{2|Q|}{\delta_1^{p/2}c_\psi^{p/2}p}\\
    &\qquad+
    \frac{p-1}p\delta_2^{\frac{p}{p-1}}
    \sup_{t\in[0,T]}
    \|\varphi_\ep(t)\|_{L^p(\Omega)}^p
    +\frac1{p}\delta_2^p
    \|\mu_\ep+\chi\sigma_\ep\|_{L^1(0,T; L^p(\Omega))}^p\,.
\end{align*}
Hence, using the continuous inclusion $V\embed L^6(\Omega)$
and choosing $\delta_1$ and $\delta_2$ sufficiently small, 
recalling assumption {\bf A7} and the estimates
\eqref{eq:est1}, \eqref{eq:est4}, and \eqref{eq:est5}
we deduce that there exists $M>0$ independent of $\ep$
such that 
\begin{equation}
    \label{eq:est10}
    \|\varphi_\ep\|_{L^\infty(0,T; L^6(\Omega))}^6\leq M\,.
\end{equation}

Let us focus now on the term $R_\ep$. By H\"older 
inequality, the inclusion $V\embed L^6(\Omega)$,
the estimate \eqref{eq:est5}, and assumption {\bf A5}
we immediately have
\begin{align*}
  \|R_\ep\|_{L^2(0,T; L^{6/5}(\Omega))}&\leq
  \|P(\varphi_\ep)\|_{L^\infty(0,T; L^{3/2}(\Omega))}
  \|\sigma_\ep+\chi(1-\varphi_\ep) 
  - \mu_\ep\|_{L^2(0,T; L^6(\Omega))}\\
  &\leq M\left(1 + 
  \|\varphi_\ep\|_{L^\infty(0,T;
  L^4(\Omega))}^4\right)\,,
\end{align*}
so that by \eqref{eq:est10}
we have that 
\begin{equation}
    \label{eq:est11}
    \|R_\ep\|_{L^2(0,T; L^{6/5}(\Omega))}\leq M\,.
\end{equation}
Recalling that $L^{6/5}(\Omega)\embed V^*$
continuously, by comparison in equation \eqref{eq3:nl},
estimate \eqref{eq:est3}, 
and the boundedness of  $n$,
it also follows that 
\begin{equation}
    \label{eq:est11}
    \|\sigma_\ep\|_{H^1(0,T; V^*)}\leq M\,.
\end{equation}

\subsection{Passage to the limit}
By estimates \eqref{eq:est1}--\eqref{eq:est11}
and by the classical weak compactness results, 
we deduce that there exist
\begin{align*}
    &\varphi\in H^1(0,T; H)\cap L^\infty(0,T; L^6(\Omega))\,,
    \qquad\mu\in L^2(0,T; V)\,,\\
    &R\in L^2(0,T; L^{6/5}(\Omega))\,,\\
    &\xi,\nu\in L^2(0,T; H)\,,\\
    &\sigma\in H^1(0,T; V^*)\cap L^\infty(0,T; H)\,,
\end{align*}
with
\begin{align*}
    &\sigma+\chi(1-\varphi) \in L^2(0,T; V)\,,
\end{align*}
and such that, as $\ep\searrow0$ 
(possibly along a not relabelled subsequence),
the following convergences hold:
\begin{align}
    \nonumber \varphi_\ep\wstarto\varphi
    \qquad&\text{in } 
    H^1(0,T; H)\cap L^\infty(0,T; L^6(\Omega))\,,\\
    \label{eq:mu-ep}\mu_\ep\rightharpoonup\mu 
    \qquad&\text{in } L^2(0,T;V)\,,\\
    \nonumber R_\ep\rightharpoonup R
    \qquad&\text{in } L^2(0,T; L^{6/5}(\Omega))\,,\\
    \nonumber \psi'(\varphi_\ep)\rightharpoonup\xi 
    \qquad&\text{in } L^2(0,T;H)\,,\\
     \nonumber B_\ep(\varphi_\ep) \rightharpoonup \nu 
     \qquad&\text{in } L^2(0,T;H)\,,\\
     \nonumber \sigma_\ep\wstarto\sigma
     \qquad&\text{in } 
     H^1(0,T; V^*)\cap L^\infty(0,T; H)\,,\\
     \label{eq:term-ep}\sigma_\ep+\chi(1-\varphi_\ep)\rightharpoonup
     \sigma+\chi(1-\varphi)
     \qquad&\text{in } L^2(0,T; V)\,.
\end{align}
It follows then, by the Aubin-Lions compactness 
results (see \cite[Cor.~4]{simon}), that also
\begin{align*}
  \varphi_\ep\to\varphi \qquad&\text{in } 
  C^0([0,T]; V^*)\,,\\
  \sigma_\ep\to\sigma \qquad&\text{in } 
  C^0([0,T]; V^*)\,,\\
  \sigma_\ep+\chi(1-\varphi_\ep)\to
  \sigma+\chi(1-\varphi)
  \qquad&\text{in } L^2(0,T; H)\,.
\end{align*}
We proceed by proving a stronger convergence 
for $(\varphi_\ep)_\ep$. By using the compactness 
lemma \cite[Lem.~3.4]{DST2} and the 
estimate \eqref{eq:est1}, for 
every $\delta>0$, there exists $c_\delta>0$
such that, for every 
$\ep_1,\ep_2>0$ we have 
\begin{align*}
    &\|\varphi_{\ep_1}-
    \varphi_{\ep_2}\|_{C^0([0,T]; H)}\\
    &\leq \delta \left(
    \|E_{\ep_1}(\varphi_{\ep_1})\|_{L^\infty(0,T)}
    +\|E_{\ep_2}(\varphi_{\ep_2})\|_{L^\infty(0,T)}
    \right)+
    c_\delta\|\varphi_{\ep_1}-
    \varphi_{\ep_2}\|_{C^0([0,T]; V^*)}\\
    &\leq \delta M + c_\delta\|\varphi_{\ep_1}-
    \varphi_{\ep_2}\|_{C^0([0,T]; V^*)}\,.
\end{align*}
Since $\delta>0$ is arbitrary and we already know that 
$\varphi_\ep\to\varphi$ in $C^0([0,T]; V^*)$, we infer that 
\[
  \varphi_\ep\to \varphi \qquad\text{in } 
  C^0([0,T]; H)\,,
\]
which yields, by comparison, that 
\[
  \sigma_\ep\to \sigma \qquad\text{in } 
  L^2(0,T; H)\,.
\]
This strong convergence of $\varphi$ allows us to 
identify the limit $\xi$ by strong-weak closure.
Indeed, we can write
$\psi'=(\psi'+C_\psi I)-C_\psi I$, where
$\psi'+C_\psi I$ is maximal monotone by {\bf A2}
and $C_\psi I$ is linear: hence, the strong-weak 
closure of maximal monotone graphs immediately yields
that 
\[
  \xi=\psi'(\varphi) \quad\text{a.e.~in } Q\,.
\]

Let us focus now on the identification of $\nu$:
more specifically, we need to prove
that $\varphi\in L^\infty(0,T; V)\cap L^2(0,T; W)$
and that $\nu=-\Delta\varphi$.
To this end, we argue as in the works
\cite{DST2, DST}: by the Gamma convergence result
in \cite[Lem.~3.3]{DST2}, 
from the estimate \eqref{eq:est1} 
we directly have that 
$\varphi\in L^\infty(0,T; V)$ and 
\[
  \frac12\int_\Omega|\nabla\varphi(t)|^2
  \leq\liminf_{\ep\searrow0}E_\ep(\varphi_\ep(t))\leq M
  \qquad\text{for a.e.~}t\in(0,T)\,.
\]
Furthermore, as a consequence of \cite[Prop.~2.1]{DST2}
(see also \cite[\S~5]{DST2}), by the estimate
\eqref{eq:est1} 
and the strong convergence of $(\varphi_\ep)_\ep$ 
we have that 
\[
  B_\ep(\varphi_\ep) \rightharpoonup
  B(\varphi) \qquad\text{in } L^2(0,T; V^*)\,,
\]
where $B$ is the weak (variational) formulation
of the negative Laplacian with homogeneous 
Neumann conditions, i.e.
\[
  \langle B(v), \zeta\rangle=
  \int_\Omega\nabla v\cdot\nabla\zeta\,,
  \quad v,\zeta\in V\,\xd x.
\]
Since $B_\ep(\varphi)\rightharpoonup\nu$
in $L^2(0,T; H)$, we deduce that 
\[
  \nu=B(\varphi) \in L^2(0,T; H)\,,
\]
which implies by the classical elliptic regularity 
theory that $\varphi\in L^2(0,T; W)$ and 
$\nu=-\Delta\varphi$, as required.

Let us show now how to pass to the limit
in the terms with the mobilities $m$ and $n$.
Since $m$ and $n$ are bounded and continuous, the 
dominated convergence theorem yields that 
\[
  m(\varphi_\ep)\to m(\varphi)\,, \quad
  n(\varphi_\ep)\to n(\varphi)
  \qquad\text{in } 
  L^r(Q) \quad\forall\,r\in[2,+\infty)\,.
\]
Hence, since also 
\[
  \nabla \mu_\ep\rightharpoonup\nabla\mu\,, \quad
  \nabla(\sigma_\ep-\chi(1-\varphi_\ep))\rightharpoonup
  \nabla(\sigma+\chi(1-\varphi))
  \qquad\text{in } L^2(0,T; L^2(\Omega)^3)\,,
\]
we have that, for every $p\in[1,2)$,
\begin{align*}
    m(\varphi_\ep)\nabla\mu_\ep \rightharpoonup
    m(\varphi)\nabla\mu 
    \qquad&\text{in } L^p(0,T; L^p(\Omega)^3)\,,\\
    n(\varphi_\ep)
    \nabla(\sigma_\ep-\chi(1-\varphi_\ep)) \rightharpoonup
    n(\varphi)\nabla(\sigma-\chi(1-\varphi))
    \qquad&\text{in } L^p(0,T; L^p(\Omega)^3)\,.
\end{align*}
Since $W\embed L^\infty(\Omega)$ continuously,
this implies in particular that, for every $p\in[1,2)$,
\begin{align*}
    -\div(m(\varphi_\ep)\nabla\mu_\ep) \rightharpoonup
    -\div(m(\varphi)\nabla\mu)
    \qquad&\text{in } L^p(0,T; W^*)\,,\\
    -\div(n(\varphi_\ep)
    \nabla(\sigma_\ep-\chi(1-\varphi_\ep))) \rightharpoonup
    -\div(n(\varphi)\nabla(\sigma-\chi(1-\varphi)))
    \qquad&\text{in } L^p(0,T; W^*)\,.
\end{align*}

Finally, the last thing that we need to show is
to identify the term $R$. To this end, 
we note first of all that by the continuous inclusion
$V\embed L^6(\Omega)$, from \eqref{eq:mu-ep} and \eqref{eq:term-ep} we have
\[
  \sigma_\ep + \chi(1-\varphi_\ep)-\mu_\ep
  \rightharpoonup
  \sigma+\chi(1-\varphi) - \mu 
  \qquad\text{in } L^2(0,T; L^6(\Omega))\,.
\]
By the continuity of $P$, 
\[
  P(\varphi_\ep)\to P(\varphi)
  \quad\text{a.e.~in } Q\,,
\]
where, by assumption {\bf A5},
\[
  |P(\varphi_\ep)| \leq M\left(1+|\varphi_\ep|^4\right)\,.
\]
Now,
$(|\varphi_\ep|^4)_\ep$ is bounded in 
$L^\infty(0,T; L^{3/2}(\Omega))$ by \eqref{eq:est10},
hence the
Vitali's dominated convergence theorem yields  
\[
  P(\varphi_\ep)\to P(\varphi) \qquad\text{in }
  L^r(0,T; L^p(\Omega)) \quad\forall\,p\in[1,3/2)\,,
  \quad\forall\,r\in[2,+\infty)\,.
\]
We deduce that, for every $s\in[1,2)$
\[
  R_\ep=P(\varphi_\ep)
  (\sigma_\ep+\chi(1-\varphi_\ep) - \mu_\ep)
  \rightharpoonup P(\varphi)
  (\sigma+\chi(1-\varphi) - \mu) \qquad
  \text{in } L^s(0,T; L^{6/5}(\Omega))\,,
\]
which implies  
\[
  R= P(\varphi)
  (\sigma+\chi(1-\varphi) - \mu)\,.
\]

The information on $\{(u_\ep,w_\ep)\}_\ep$
and assumption {\bf A6} readily imply
by the dominated convergence theorem that 
\begin{align*}
    h(\varphi_\ep)u_\ep\rightharpoonup h(\varphi)u
    \quad\text{in } L^2(0,T; H)\,.
\end{align*}

Letting $\ep\searrow0$ in the weak formulation 
of the nonlocal system in Theorem~\ref{th:wp_nl},
we pass to the limit and obtain exactly 
the variational formulation of the local system 
in Theorem~\ref{th:conv}, for any test function 
$\zeta\in W$. As $W\embed V$ densely, 
the existence part of Theorem~\ref{th:conv} is proved.

\subsection{Uniqueness of the local state system}
Eventually, we show here that under the additional assumptions
{\bf B3}--{\bf B4} the triplet $(\varphi,\mu,\sigma)$ is actually
the unique weak solution of the local system
and there is continuous dependence with respect to the initial data and the controls.
Let us introduce the following notation 
\begin{align*}
&\hat\varphi:=\varphi_1-\varphi_2,\quad \hat\mu:=\mu_1-\mu_2, \quad \hat\sigma:=\sigma_1-\sigma_2, \\
&\hat u:=u_1-u_2, \quad \hat w:=w_1-w_2, \quad \hat\varphi_0:=\varphi_{0}^1-\varphi_{0}^2, \quad \hat\sigma_0:=\sigma_{0}^1-\sigma_{0}^2\,, 
\end{align*}
where $(\varphi_1,\sigma_1,\mu_1)$ and $(\varphi_2,\sigma_2,\mu_2)$ are two solutions of \eqref{loc_last} corresponding to the data $(u_1,w_1)$, $(\varphi_{0}^1, \sigma_{0}^1)$ and $(u_2,w_2)$, $(\varphi_{0}^2, \sigma_{0}^2)$.
Recalling that in this setting both mobilities are constant and equal to $1$ and that $\tau>0$, we rewrite  system \eqref{loc_last} for the differences and we find 
\begin{align}
    \label{loc-diff1}
    &\langle\partial_t\hat\varphi,
      \zeta\rangle
      +\int_\Omega
     \nabla\hat\mu\cdot
      \nabla\zeta\, \xd x = 
      \int_\Omega
     ( R_1-R_2)\zeta\,\xd x
      -\int_\Omega
      ((h(\varphi_1)-h(\varphi_2))u_1+h(\varphi_2)\hat u)\zeta\,\xd x,\\
      \label{loc-diff2}
      &\langle\partial_t\hat\sigma,
      \zeta\rangle +
      \int_\Omega 
      \nabla(\hat\sigma+
      \chi(1-\hat\varphi))\cdot
      \nabla\zeta\,\xd x=
      -\int_\Omega
      (R_1-R_2)\zeta\,\xd x
      +\int_\Omega
      \hat w\zeta\,\xd x,
\end{align}
 almost everywhere in $(0,T)$,
  for every $\zeta\in V$, where 
\begin{align}
    \label{loc-diff-3}
&\hat\mu=
  \tau\partial_t\hat\varphi
  -\Delta\hat\varphi+
  \psi'(\varphi_1)-\psi'(\varphi_2)-\chi\hat\sigma\,,\\
  \label{loc-diff4}
  &R_1-R_2:=(P(\varphi_1)-P(\varphi_2))(\sigma_1+\chi(1-\varphi_1)-\mu_1)+P(\varphi_2)(\hat\sigma+\chi(1-\hat\varphi))\,.
  \end{align}
In the following estimates we denote by $C$ positive constants (which may also differ from line to line) depending on the problem data, and, in particular on $\tau$.
Testing \eqref{loc-diff1} by $\hat\mu$ and \eqref{loc-diff-3}  by $\partial_t\hat\varphi$, summing both expressions, and integrating in time, we infer
\begin{align}\label{u-1}
&\int_0^t\|\nabla\mu\|^2_{H}\,\xd s+\tau\int_0^t\|\partial_t\hat\varphi\|_H^2 \,\xd s+\frac12\|\nabla\hat\varphi(t)\|_H^2\leq \\
\nonumber &\frac12\|\nabla\hat\varphi_0\|_H^2+\int_0^t\int_\Omega (R_1-R_2)\hat\mu\,\xd x\,\xd s+\int_0^t\int_\Omega ((h(\varphi_1)-h(\varphi_2))u_1+h(\varphi_2)\hat u)\hat\mu\,\xd x\,\xd s\\
\nonumber &
+\int_0^t\int_\Omega(\psi'(\varphi_1)-\psi'(\varphi_2))\partial_t\hat\varphi\,\xd x\,\xd s+\chi\int_0^t\int_\Omega\hat\sigma \partial_t\hat\varphi\,\xd x\,\xd s.
\end{align}
Testing \eqref{loc-diff2} by $\hat\sigma+\chi(1-\hat\varphi)$, we deduce
\begin{align}\label{u-2}
&\frac12\|\hat\sigma(t)\|_H^2+\int_0^t\|\nabla(\hat\sigma+\chi(1-\hat\varphi))\|_H^2\,\xd s\leq \frac12 \|\hat\sigma_0\|_H^2\\
\nonumber
&-\int_0^t\int_\Omega\partial_t\hat\sigma(\chi(1-\hat\varphi))\,\xd x\,\xd s+\int_0^t\int_\Omega \left|((R_1-R_2)-\hat w)( \hat\sigma+\chi(1-\hat\varphi))\right|\,\xd x\,\xd s.
\end{align}
In view of \eqref{loc-diff-3}, recalling that $\partial_{\bf n}\hat\varphi=0$ we obtain
\[
\left|\int_\Omega\hat\mu\,\xd x\right|\leq \tau\left|\int_\Omega \partial_t\hat\varphi\,\xd x\right|+\int_\Omega (\psi'(\varphi_1)-\psi'(\varphi_2)-\chi\hat\sigma)\,\xd x.
\]
In particular, taking the square in the previous inequality, using assumption {\bf (A2)} on $\psi$, and integrating the result in time, yields
\begin{align}
\nonumber
\int_0^t\left(\int_\Omega\hat\mu\,\xd x\right)^2\,\xd s&\leq 2\tau^2 |\Omega| \int_0^t\|\partial_t\hat\varphi\|^2_{H}\,\xd s+
C\int_{Q_t}(1+|\varphi_1|^4+|\varphi_2|^4)|\hat\varphi|^2\,\xd x\,\xd s\\
&\quad+C\int_{Q_t} |\hat\sigma|^2\,\xd x\,\xd s\,.
\label{u-3}
\end{align}
We proceed multiplying \eqref{u-3} by a constant $\delta<\min\{1,\frac{1}{4\tau|\Omega|}\}$, and summing the resulting inequality with \eqref{u-1} and \eqref{u-2}: 
\begin{align*}
&\int_0^t\|\nabla\hat\mu\|_H^2\,\xd s +\delta\int_0^t\left(\int_\Omega\hat\mu\,\xd x\right)^2\,\xd s+\frac\tau2\int_0^t\|\partial_t\hat\varphi\|_H^2 \,\xd s+\frac12\|\nabla\hat\varphi(t)\|_H^2 \\
&\qquad+ \frac12\|\hat\sigma(t)\|_H^2
+\int_0^t\|\nabla(\hat\sigma+\chi(1-\hat\varphi))\|_H^2\,\xd s\\
&\leq C\delta
\int_{Q_t}(1+|\varphi_1|^4+|\varphi_2|^4)|\hat\varphi|^2\,\xd x\,\xd s+  C\delta\int_{Q_t}|\hat\sigma|^2\,\xd x\xd s\\
&\qquad+\frac12 \|\hat\sigma_0\|_H^2+\int_\Omega (\hat\sigma(t)\hat\varphi(t)-\hat\sigma_0\hat\varphi_0)\,\xd x\\
&\qquad+\int_0^t \int_\Omega|(R_1-R_2-\hat w))(\hat\sigma+\chi(1-\hat\varphi))|\,\xd x\xd s+\frac12\|\nabla\hat\varphi_0\|_H^2+\int_0^t\int_\Omega (R_1-R_2)\hat\mu\,\xd x\,\xd s\\
\nonumber &\qquad
+\int_0^t\int_\Omega ((h(\varphi_1)-h(\varphi_2))u_1+h(\varphi_2)\hat u)\hat\mu\,\xd x\,\xd s+\int_0^t\int_\Omega(\psi'(\varphi_1)-\psi'(\varphi_2))\partial_t\hat\varphi\,\xd x\,\xd s.
\end{align*}
Adding to both sides of the previous estimate $\frac12\|\hat\varphi\|_H^2$ and in view of assumption {\bf (A2)} on $\psi$, the Poincar\'e-Wirtinger inequality, the H\"older and Young inequalities, and the inclusion $V\embed L^6(\Omega)$, we further infer
\begin{align}
\nonumber
&\frac\delta2\int_0^t\|\hat\mu\|_V^2\,\xd s
+\frac\tau4\int_0^t\|\partial_t\hat\varphi\|_H^2 \,\xd s+\frac12\|\hat\varphi(t)\|_V^2
+ \frac14\|\hat\sigma(t)\|_H^2
+\frac12\int_0^t\|\hat\sigma+\chi(1-\hat\varphi)\|_V^2\,\xd s\\
\nonumber&\leq 
C(1+\|\hat w\|_{L^\infty(Q)})
\int_0^t(1+\|\varphi_1\|_V^4
+\|\varphi_2\|_V^4)\|\hat\varphi\|_V^2\,\xd s
+ C(1+\|\hat w\|_{L^\infty(Q)})\int_{Q_t}|\hat\sigma|^2\,\xd x\xd s\\
\nonumber
&\qquad +C \|\hat\sigma_0\|_H^2+C\|\hat\varphi_0\|_V^2
+C\|\hat\varphi(t)\|_H^2
+C\int_0^t \|R_1-R_2\|^2_{L^{6/5}(\Omega)}\,\xd s\\
\label{laststepuniqueness}&\qquad
+C\int_0^t \|\hat w\|^2_{L^{6/5}(\Omega)}\,\xd s
+\int_0^t\|(h(\varphi_1)-h(\varphi_2))u_1\|_{L^{6/5}(\Omega)}^2\,\xd s
+\int_0^t\|h(\varphi_2)\hat u\|_{L^{6/5}(\Omega)}^2\,\xd s\,.
\end{align}
Now, on the one hand, recalling \eqref{loc-diff4} as well as the Lipschitz regularity of the proliferation function $P$ (see assumption {\bf B4}), by the H\"older inequality and the inclusion $V\embed L^6(\Omega)$
we infer the estimate 
\begin{align}
   \nonumber
   \int_0^t\|R_1-R_2\|^2_{L^{6/5}(\Omega))}\,\xd s&\leq
C\int_0^t\left(1+\|\sigma_1\|_V^2+\|\varphi_1\|_V^2+\|\mu_1\|_V^2\right)
   \|\hat \varphi\|_H^2\,\xd s\\
   \label{stimaR}
   &\quad+C\int_0^t\|\hat\sigma+\chi(1-\hat\varphi)\|_H^2\,\xd s.
\end{align}
On the other hand, by assumption {\bf B4} we also have
\begin{align}
   \nonumber
   &\int_0^t\|(h(\varphi_1)-h(\varphi_2))u_1\|_{L^{6/5}(\Omega)}^2\,\xd s
  +\int_0^t\|h(\varphi_2)\hat u\|_{L^{6/5}(\Omega)}^2\,\xd s\\
  \label{stimah}
  &\leq C\|u_1\|_{L^\infty(Q)}\int_0^t\|\hat\varphi\|_H^2\,\xd s+
  C\int_0^t\|\hat u\|^2_{L^{6/5}(\Omega)}\,\xd s.
\end{align}
By combining \eqref{laststepuniqueness}--\eqref{stimah}, and using the chain rule formula $$\|\hat\varphi(t)\|_H^2={\color{blue}\|\hat\varphi_0\|_H^2+}2\int_0^t(\partial_t\hat\varphi, \hat\varphi)\,\xd s\leq {\color{blue}\|\hat\varphi_0\|_H^2+}\frac\tau8\int_0^t\|\partial_t\hat\varphi\|_H^2\xd s+C\int_0^t\|\hat\varphi\|_H^2\,\xd s\,,$$ 
we get 
\begin{align}
\nonumber
&\int_0^t\|\hat\mu\|_V^2\,\xd s
+\int_0^t\|\partial_t\hat\varphi\|_H^2 \,\xd s+\|\hat\varphi(t)\|_V^2
+ \|\hat\sigma(t)\|_H^2
+\int_0^t\|\hat\sigma+\chi(1-\hat\varphi)\|_V^2\,\xd s\\
\nonumber&\leq 
C\int_0^t\left(\left(1+\|\sigma_1\|_V^2+\|\mu_1\|_V^2\right)\|\hat\varphi\|_V^2
+ \|\hat\sigma\|_H^2\right)\,\xd s+C\|\hat\sigma_0\|_H^2+C\|\hat\varphi_0\|_V^2\\
\nonumber
&\quad +C\int_0^t \left(\|\hat w\|^2_{L^{6/5}(\Omega)}+\|\hat u\|^2_{L^{6/5}(\Omega)}\right)\,\xd s\,.
\end{align}
Finally, the thesis follows by applying the Gronwall Lemma. Uniqueness 
follows by taking $\hat u=\hat w=\hat \varphi_0=\hat \sigma_0=0$.

\subsection{Further properties of the limit system}
The extra regularity $\varphi\in L^2(0,T; H^3(\Omega))$
is a consequence of uniqueness and of \cite[Thm.~1]{frig-grass-roc}.
Eventually, under {\bf B4} one has that $P$ is Lipschitz-continuous so that 
\[
P(\varphi_\ep)\to P(\varphi) \quad\text{in } C^0([0,T]; H)\,.
\]
Moreover, since by the inclusion 
$V\embed L^6(\Omega)$ one has that 
\[
\sigma_\ep+\chi(1-\varphi_\ep)-\mu_\ep
\rightharpoonup
\sigma+\chi(1-\varphi)-\mu \quad\text{in } L^2(0,T; L^6(\Omega))\,,
\]
we readily deduce that 
\[
R_\ep\rightharpoonup R \quad\text{in } L^2(0,T; L^{3/2}(\Omega))\,.
\]
Now, testing \eqref{eq3:nl} by $\sigma_\ep+\chi(1-\varphi_\ep)$ and integrating by parts in time yields, 
for every $t\in[0,T]$, 
\begin{align*}
    &\frac12\Vert\sigma_\ep(t)\Vert_H^2
    +\int_{Q_t}n(\varphi_\ep)|\nabla(\sigma_\ep+\chi(1-\varphi_\ep))|^2
    \,\xd x\xd s\\
    &=\frac12\Vert\sigma_{0,\ep}\Vert_H^2
    +\int_{Q_t}(-R_\ep+w_\ep)(\sigma_\ep+\chi(1-\varphi_\ep))\,\xd x\xd s\\
    &\qquad
    -\chi\int_{Q_t}\sigma_\ep\partial_t\varphi_\ep
    -\chi\int_\Omega \sigma_\ep(t)(1-\varphi_\ep(t))\,\xd x+
    \chi\int_\Omega \sigma_{0,\ep}(1-\varphi_{0,\ep})\,\xd x\,.
\end{align*}
By recalling that $\sigma_\ep(t)\rightharpoonup\sigma(t)$
in $H$, that $\sigma_\ep+\chi(1-\varphi_\ep)\rightharpoonup
\sigma+\chi(1-\varphi)$ in $L^2(0,T; V)$ and, by compactness, also that
$\sigma_\ep+\chi(1-\varphi_\ep)\to
\sigma+\chi(1-\varphi)$ in $L^2(0,T; L^p(\Omega))$ for every $p\in[1,6)$,
by exploiting the convergences obtained above we get 
by weak lower semicontinuity that 
\begin{align*}
  \limsup_{\ep\searrow0}\frac12\Vert\sigma_\ep(t)\Vert_H^2
  &=\lim_{\ep\searrow0}\frac12\Vert\sigma_{0,\ep}\Vert_H^2
  -\liminf_{\ep\searrow0}
  \int_{Q_t}n(\varphi_\ep)|\nabla(\sigma_\ep+\chi(1-\varphi_\ep))|^2
    \,\xd x\xd s\\
  &\qquad+\lim_{\ep\searrow0}\left[
  \int_{Q_t}(-R_\ep+w_\ep)(\sigma_\ep+\chi(1-\varphi_\ep))\,\xd x\xd s
  \right.\\
  &\qquad\left.-\chi\int_{Q_t}\sigma_\ep\partial_t\varphi_\ep
    -\chi\int_\Omega \sigma_\ep(t)(1-\varphi_\ep(t))\,\xd x+
    \chi\int_\Omega \sigma_{0,\ep}(1-\varphi_{0,\ep})\,\xd x\right]\\
  &\leq\frac12\Vert\sigma_{0}\Vert_H^2
  -\int_{Q_t}n(\varphi)|\nabla(\sigma+\chi(1-\varphi))|^2
    \,\xd x\xd s\\
  &\qquad+\int_{Q_t}(-R+w)(\sigma+\chi(1-\varphi))\,\xd x\xd s\\
  &\qquad-\chi\int_{Q_t}\sigma\partial_t\varphi
    -\chi\int_\Omega \sigma(t)(1-\varphi(t))\,\xd x+
    \chi\int_\Omega \sigma_{0}(1-\varphi_{0})\,\xd x\,.
\end{align*}
Now, the analogous computations on the local system imply that 
\begin{align*}
    &\frac12\Vert\sigma(t)\Vert_H^2
    +\int_{Q_t}n(\varphi)|\nabla(\sigma+\chi(1-\varphi))|^2
    \,\xd x\xd s\\
    &=\frac12\Vert\sigma_{0}\Vert_H^2
    +\int_{Q_t}(-R+w)(\sigma+\chi(1-\varphi))\,\xd x\xd s\\
    &\qquad
    -\chi\int_{Q_t}\sigma\partial_t\varphi
    -\chi\int_\Omega \sigma(t)(1-\varphi(t))\,\xd x+
    \chi\int_\Omega \sigma_{0}(1-\varphi_{0})\,\xd x\,,
\end{align*}
so that we deduce 
\[
\limsup_{\ep\searrow0}\frac12\Vert\sigma_\ep(t)\Vert_H^2\leq
\frac12\Vert\sigma(t)\Vert_H^2 \quad\forall\,t\in[0,T]\,,
\]
yielding the last desired convergence 
\[
\sigma_\ep(t)\to \sigma(t) \quad\text{in } H\quad\forall\,t\in[0,T]\,.
\]
This concludes the proof of Theorem~\ref{th:conv}.

\section{The optimal control problem}
\label{sec:opt}
Once the nonlocal-to-local convergence of the state-system is established, 
we are ready to state the optimal control problem.
In this direction, we introduce the following assumptions.
\begin{enumerate}[start=1,label={{\bf C\arabic*}}]
  \item
    The space of admissible controls is defined as
    \begin{align*}
    \mU&:=\left\{(u,w)\in [L^\infty(Q)\cap H^1(0,T; H)]
    \times L^\infty(Q):\right.\\
    &\qquad\left.\;
    u_{min}\leq u\leq u_{max}\,,\quad
    w_{min}\leq w\leq w_{max} \quad\text{a.e.~in } Q\,,
    \right.\\
    &\qquad\left.
    \; \|u\|_{L^\infty(Q)\cap H^1(0,T; H)}
    +\|w\|_{L^\infty(Q)}\leq C_\mU\right\}\,,
    \end{align*}
    where $u_{min}, u_{max}, w_{min}, w_{max}\in L^\infty(Q)$, 
    $u_{min}\geq0$ a.e.~in $Q$, and
    $C_\mU>0$ is a prescribed constant.
  \item The coefficients $\alpha_\Omega, \alpha_Q, \beta_Q, \alpha_u, \beta_w$ are nonnegative but not all zero.
  \item The targets satisfy $\varphi_\Omega\in H$ and
  $\varphi_Q, \sigma_Q\in L^2(0,T; H)$.
  \item The proliferation and truncation functions
  satisfy $P,h\in C^2(\mathbb R)\cap W^{1,\infty}(\mathbb R)$.
\end{enumerate}
In this setting, the
optimisation problem associated to the local state-system reads:\\

\noindent(CP) \textit{Minimise the cost functional}
\begin{equation}
	\begin{split}
		\mathcal{J}(\varphi, \sigma, u, w) & = \, \frac{\alpha_{\Omega}}{2} \int_{\Omega} |\varphi(T) - \varphi_{\Omega}|^2 + \frac{\alpha_Q}{2} \int_{0}^{T} \int_{\Omega} |\varphi - \varphi_Q|^2 
         + \frac{\beta_Q}{2} \int_{0}^{T} \int_{\Omega} |\sigma - \sigma_Q|^2 \\ 
		& \quad + \frac{\alpha_u}{2} \int_{0}^{T} \int_{\Omega} |u|^2  + \frac{\beta_w}{2} \int_{0}^{T} \int_{\Omega} |w|^2\,,
	\end{split}
\end{equation}
\textit{subject to the control constraint $(u,w)\in\mathcal U$,
where $(\varphi,\mu,\sigma)$ is a weak solution
to the local state system associated to $(u,w)$
in the sense of conditions
\eqref{loc_first}--\eqref{loc_last}.}\\

In the setting {\bf A1}--{\bf A7}, {\bf B1}--{\bf B5}, {\bf C1}--{\bf C4},
for every admissible control $(u,w)\in\mU$,
the local state system admits a unique solution $(\varphi,\mu,\sigma)$
in the sense of \eqref{loc_first}--\eqref{loc_last}. Hence, the solution map
\begin{align*}
  \mathcal S:\mU\to 
  &[H^1(0,T; H)\cap L^\infty(0,T; V)\cap L^2(0,T; H^3(\Omega))]\times
  [H^1(0,T; V^*)\cap L^2(0,T; V)]
\end{align*}
is well-defined as
\[
  \mathcal S:(u,w)\mapsto(\varphi,\sigma)\,.
\]
Hence, the optimisation problem (CP) can be written in so-called reduced form
and we define optimal control for problem (CP) any 
$(\overline u, \overline w)\in\mU$ such that 
\[
  \mathcal J(\mathcal S(\overline u, \overline w), \overline u, \overline w)
  =\min_{(u,w)\in\mU}\mathcal J(\mathcal S(u,w), u, w)\,.
\]
Existence of optimal controls for (CP) follows directly by variational arguments.
\begin{proposition}[Existence of optimal controls for (CP)]
    \label{th:ex_opt}
    Assume {\bf A1}--{\bf A7}, {\bf B1}--{\bf B5}, {\bf C1}--{\bf C4}.
    Then, the problem (CP) admits at least an optimal control $(\overline u, 
    \overline w)\in\mathcal U$.
\end{proposition}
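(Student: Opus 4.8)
The natural approach is the direct method of the calculus of variations applied to the reduced functional $(u,w)\mapsto\mathcal J(\mathcal S(u,w),u,w)$, the only nontrivial ingredient being the weak sequential closedness of the graph of the solution operator $\mathcal S$. I would fix a minimizing sequence $\{(u_n,w_n)\}_n\subset\mU$ for this functional (the infimum being a finite nonnegative number since $\mathcal J\geq0$) and extract, using the boundedness built into the definition of $\mU$ in {\bf C1}, a subsequence (not relabelled) with $u_n\rightharpoonup\bar u$ in $H^1(0,T;H)$, $u_n\wstarto\bar u$ in $L^\infty(Q)$, and $w_n\wstarto\bar w$ in $L^\infty(Q)$. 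That $(\bar u,\bar w)\in\mU$ is then immediate: the pointwise bounds $u_{min}\leq u_n\leq u_{max}$ and $w_{min}\leq w_n\leq w_{max}$ pass to the weak-$*$ limit upon testing against nonnegative functions in $L^1(Q)$, while the norm constraint survives by weak (resp. weak-$*$) lower semicontinuity of $\|\cdot\|_{H^1(0,T;H)}$ and $\|\cdot\|_{L^\infty(Q)}$.

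Next I would consider the associated states $(\varphi_n,\mu_n,\sigma_n):=\mathcal S(u_n,w_n)$. The a priori estimates of Section~\ref{sec:state}, which survive the limit $\ep\searrow0$ and depend on the controls only through their (uniformly bounded) $L^\infty(Q)$-norms, provide bounds for $\{(\varphi_n,\mu_n,\sigma_n)\}_n$ uniform in $n$ in the regularity classes of \eqref{loc_first}--\eqref{loc_last}, together with $\varphi_n$ bounded in $L^2(0,T;H^3(\Omega))$. One then repeats the compactness argument of the proof of Theorem~\ref{th:conv} — now at fixed $\ep=0$ with the controls playing the role of the varying parameter — obtaining $\varphi_n\to\bar\varphi$ in $C^0([0,T];H)$, $\sigma_n\to\bar\sigma$ in $L^2(0,T;H)$, weak limits for $\mu_n$ and for $\partial_t\varphi_n,\partial_t\sigma_n$, identification of $\psi'(\varphi_n)$, $P(\varphi_n)$, $R_n$ and of the mobility terms by continuity together with Vitali's theorem, and $h(\varphi_n)u_n\rightharpoonup h(\bar\varphi)\bar u$ in $L^2(0,T;H)$. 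Passing to the limit in \eqref{loc_first}--\eqref{loc_last} shows that $(\bar\varphi,\bar\mu,\bar\sigma)$ is a weak solution of the local state system with controls $(\bar u,\bar w)$; by the uniqueness statement of Theorem~\ref{th:conv} (available since {\bf B3}--{\bf B4} hold in the present setting) this gives $(\bar\varphi,\bar\sigma)=\mathcal S(\bar u,\bar w)$.

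Finally, lower semicontinuity of $\mathcal J$ closes the argument. The three tracking terms are continuous along the strong convergences just obtained, since $\varphi_n(T)\to\bar\varphi(T)$ in $H$ and $\varphi_n\to\bar\varphi$, $\sigma_n\to\bar\sigma$ in $L^2(Q)$; the two control penalisation terms are convex and continuous, hence weakly (resp. weakly-$*$) lower semicontinuous. Therefore
\[
\mathcal J(\mathcal S(\bar u,\bar w),\bar u,\bar w)\leq\liminf_{n\to\infty}\mathcal J(\mathcal S(u_n,w_n),u_n,w_n)=\inf_{(u,w)\in\mU}\mathcal J(\mathcal S(u,w),u,w),
\]
and since $(\bar u,\bar w)\in\mU$ the reverse inequality is trivial, so $(\bar u,\bar w)$ is an optimal control for (CP).

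The only step requiring genuine work is the state passage to the limit, i.e. the weak sequential closedness of $\mathcal S$; however, this is a direct transcription of the compactness machinery already deployed for Theorem~\ref{th:conv}, so no essentially new obstacle arises. The one small point deserving care is that the product $h(\varphi_n)u_n$ must converge weakly even though $u_n$ converges only weakly, which is handled by writing $h(\varphi_n)u_n=h(\bar\varphi)u_n+(h(\varphi_n)-h(\bar\varphi))u_n$ and using that $h(\varphi_n)\to h(\bar\varphi)$ strongly in $L^r(Q)$ for every finite $r$ (by boundedness of $h$ and dominated convergence) while $u_n$ remains bounded in $L^\infty(Q)$.
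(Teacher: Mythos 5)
Your proposal is correct and follows essentially the same approach as the paper: the paper's own proof invokes the direct method of the calculus of variations and refers to \cite[Thm~4.2]{fornoni} for the detailed argument in the nonlocal case, noting it adapts to the local setting. Your write-up simply spells out the direct-method details (coercivity and weak-$*$ closedness of $\mU$, weak sequential closedness of $\mathcal S$ via the same compactness machinery as Theorem~\ref{th:conv}, lower semicontinuity of $\mathcal J$) that the paper leaves to the cited reference.
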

\begin{proof}[Proof of Proposition~\ref{th:ex_opt}]
  This is a direct consequence of the direct method 
  of calculus of variations. For a detailed proof in the nonlocal case 
  we refer to \cite[Thm~4.2]{fornoni}: this can be 
  easily adapted to the local case with no additional technical difficulties.
\end{proof}

\begin{remark}
Let us notice that the uniqueness of the optimal control is not to be expected here, because of the nonlinearity of the state system, which destroys the convexity of the problem.
\end{remark}

The main issue on which we focus in this paper is the identification of first 
order conditions for optimality related to optimal controls for (CP).
Instead of tackling the optimisation problem directly
at the local level, 
we exploit a suitable approximation of 
(CP) via some optimisation problems (CP)$_{\ep}$
involving the nonlocal system.
More specifically, (CP)$_\ep$ will be constructed in such a way 
that the optimal controls $(\overline u_\ep)_\ep$ of (CP)$_\ep$
converge to a prescribed optimal control $\overline u$ of (CP).
The necessary conditions for optimality for (CP)$_\ep$
are available in the literature in terms of a suitable dual system:
by proving an analogous nonlocal-to-local convergence result 
for such dual systems, we will then pass to the limit in the first order
conditions for optimality for (CP)$_\ep$ and deduce the 
corresponding ones for (CP).
This will avoid the imposition of extra regularity on
the solutions to the local problem, which would be needed
should one tackle the local optimisation directly via 
differentiability of $\mathcal S$ for example.

\subsection{The adapted optimal control problem}
Let us introduce now the family (CP)$_\ep$ of adapted optimal control problems:
we recall that we work in the setting
{\bf A1}--{\bf A7}, {\bf B1}--{\bf B5}, {\bf C1}--{\bf C4}.

Let $(\overline u, \overline w)\in\mathcal U$
be an optimal control for the problem (CP)
and let $(\overline\varphi, \overline\sigma)
:=\mathcal S(\overline u, \overline w)$ be its corresponding 
optimal state.
For every $\ep\in(0,\ep_0)$, we introduce 
the following adapted optimisation problem.\\

\noindent(CP)$_\ep$ \textit{Minimise the cost functional}
\begin{equation}
	\begin{split}
		\mathcal{J_\ep}(\phi_\ep, \sigma_\ep, u, w) & = 
        \mathcal{J}(\phi_\ep, \sigma_\ep, u, w)
        +\frac12\Vert u-\overline u\Vert_{L^2(0,T; H)}^2
        +\frac12\Vert w-\overline w\Vert_{L^2(0,T; H)}^2
	\end{split}
\end{equation}
\textit{subject to the control constraint $(u,w)\in\mathcal U$,
where $(\varphi_\ep,\mu_\ep,\sigma_\ep)$ is the unique strong 
solution
to the nonlocal state system associated to $(u,w)$
in the sense of conditions
\eqref{nl1_str}--\eqref{nl7_str}.}\\

Since the nonlocal state system is well posed in the sense of Theorem~\ref{wp_nl_strong}, the nonlocal solution 
map 
\begin{align*}
  \mathcal S_\ep:\mU\to 
  &[W^{1,\infty}(0,T; H)\cap H^1(0,T; H^2(\Omega))]
  \times [H^1(0,T; H)\cap L^\infty(0,T; V)\cap L^2(0,T; W)]
\end{align*}
is well-defined as
\[
  \mathcal S_\ep:(u,w)\mapsto(\varphi_\ep,\sigma_\ep)\,.
\]
Hence, also 
the optimisation problem (CP)$_\ep$ can be written in reduced form
and we define optimal control for the problem (CP)$_\ep$ any 
$(\overline u_\ep, \overline w_\ep)\in\mU$ such that 
\[
  \mathcal J_\ep(\mathcal S_\ep(\overline u_\ep, \overline w_\ep), 
  \overline u_\ep, \overline w_\ep)
  =\min_{(u,w)\in\mU}\mathcal J_\ep(\mathcal S_\ep(u,w), u, w)\,.
\]
Note that the existence of optimal controls for (CP)$_\ep$ follows by \cite[Thm. 4.2]{fornoni}. In the theorem below we characterize the local asymptotics for optimal controls associated to (CP)$_\ep$.

\begin{theorem}[Convergence of optimal controls]
    \label{th:opt_adap}
    Assume {\bf A1}--{\bf A7}, {\bf B1}--{\bf B5}, {\bf C1}--{\bf C4}.
    Let $(\overline u, \overline w)\in\mathcal U$ 
    be an optimal control for the problem (CP)
    and let $(\overline\varphi, \overline\sigma)
    :=\mathcal S(\overline u, \overline w)$
    be the corresponding optimal state of the local system.
    Then, for every $\ep\in(0,\ep_0)$
    the adapted optimisation problem (CP)$_\ep$ admits 
    at least an optimal control $(\overline u_\ep, \overline w_\ep)$.
    Moreover, for every family 
    $\{(\overline u_\ep, \overline w_\ep)\}_{\ep\in(0,\ep_0)}$
    of optimal controls for the problems (CP)$_\ep$, 
    if ${(\overline\varphi_\ep, \overline\sigma_\ep)
    :=\mathcal S(\overline u_\ep, \overline w_\ep)}_{\ep\in(0,\ep_0)}$
    are the corresponding optimal states of the nonlocal systems,
    as $\ep\searrow0$ there holds 
    \begin{align*}
    \overline u_\ep \to \overline u \quad&\text{in } 
  L^2(0,T; H)\,,\\
  \overline u_\ep \wstarto \overline u \quad&\text{in } 
  L^\infty(Q)\cap H^1(0,T; H)\,,\\
  \overline w_\ep \to \overline w \quad&\text{in } 
  L^2(0,T; H)\,,\\
  \overline w_\ep \wstarto \overline w \quad&\text{in } 
  L^\infty(Q)\,,\\
    \overline\varphi_\ep\to\overline\varphi
    \quad&\text{in } C^0([0,T], H)\,,\\
    \overline\sigma_\ep\to\overline\sigma
    \quad&\text{in } L^2(0,T, H)\,,\\
    \overline\sigma_\ep(t)\to\overline\sigma(t)
    \quad&\text{in } H\quad\forall\,t\in[0,T]\,.
\end{align*}
\end{theorem}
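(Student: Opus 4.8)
The plan is to split the argument into three parts: existence of optimal controls for each $(\text{CP})_\ep$, extraction of a limit for an arbitrary family of such optimal controls, and identification of the limit with the prescribed optimal control $(\overline u, \overline w)$ via the additional quadratic penalization in $\mathcal J_\ep$. For existence, I would simply invoke \cite[Thm.~4.2]{fornoni} as already noted in the excerpt: the functional $\mathcal J_\ep$ differs from $\mathcal J$ only by two continuous, convex, coercive terms, and the strong well-posedness of Theorem~\ref{wp_nl_strong} gives a well-defined (and, by the continuous dependence estimate \eqref{cont_nl}, sequentially weak-strong continuous) solution operator $\mathcal S_\ep$ on the closed, bounded, convex set $\mU$; the direct method then applies verbatim.

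\smallskip

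For the convergence, fix a family $\{(\overline u_\ep, \overline w_\ep)\}_{\ep}$ of optimal controls for $(\text{CP})_\ep$. Since $\mU$ is bounded in $[L^\infty(Q)\cap H^1(0,T;H)]\times L^\infty(Q)$, up to a (not relabelled) subsequence we have $\overline u_\ep \wstarto \hat u$ in $L^\infty(Q)\cap H^1(0,T;H)$ and $\overline w_\ep \wstarto \hat w$ in $L^\infty(Q)$ for some $(\hat u, \hat w)\in\mU$ (the constraint set is weak-$*$ closed: the pointwise bounds and the $H^1$-norm bound all pass to the weak-$*$ limit). Applying Theorem~\ref{th:conv} to the family $\{(\overline u_\ep, \overline w_\ep)\}_\ep$ — whose hypotheses are met since {\bf B3}--{\bf B4} hold under our standing assumptions — the corresponding nonlocal states $(\overline\varphi_\ep, \overline\mu_\ep, \overline\sigma_\ep)=\mathcal S_\ep(\overline u_\ep, \overline w_\ep)$ converge, in the senses \eqref{conv_loc_first}--\eqref{conv_loc_last}, to the \emph{unique} weak solution $(\hat\varphi, \hat\mu, \hat\sigma)$ of the local system associated to $(\hat u, \hat w)$, i.e.\ $(\hat\varphi, \hat\sigma)=\mathcal S(\hat u, \hat w)$; in particular $\overline\varphi_\ep \to \hat\varphi$ in $C^0([0,T];H)$, $\overline\sigma_\ep \to \hat\sigma$ in $L^2(0,T;H)$, and $\overline\sigma_\ep(t)\to\hat\sigma(t)$ in $H$ for every $t$.

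\smallskip

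It remains to show $(\hat u, \hat w) = (\overline u, \overline w)$ and to upgrade the weak convergence of the controls to strong convergence in $L^2(0,T;H)$; this is where the penalization is crucial and constitutes the heart of the proof. The key comparison is to test the optimality of $(\overline u_\ep, \overline w_\ep)$ for $(\text{CP})_\ep$ against the competitor $(\overline u, \overline w)$ itself: writing $(\varphi_\ep^*, \sigma_\ep^*):=\mathcal S_\ep(\overline u, \overline w)$, optimality gives
\[
\mathcal J(\overline\varphi_\ep, \overline\sigma_\ep, \overline u_\ep, \overline w_\ep)
+\tfrac12\|\overline u_\ep-\overline u\|_{L^2(0,T;H)}^2
+\tfrac12\|\overline w_\ep-\overline w\|_{L^2(0,T;H)}^2
\le \mathcal J(\varphi_\ep^*, \sigma_\ep^*, \overline u, \overline w).
\]
On the right-hand side, $(\varphi_\ep^*, \sigma_\ep^*)\to(\overline\varphi, \overline\sigma)=\mathcal S(\overline u, \overline w)$ by Theorem~\ref{th:conv} applied with the constant sequence $(\overline u, \overline w)$, so by continuity of $\mathcal J$ with respect to these convergences (the $\varphi(T)$-term uses $C^0([0,T];H)$ convergence, the space-time terms use $L^2(Q)$ convergence) the right-hand side tends to $\mathcal J(\mathcal S(\overline u, \overline w), \overline u, \overline w)$. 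On the left-hand side, $\mathcal J$ is weakly-$*$ lower semicontinuous along our convergences (convex continuous in $(u,w)$, and $(\overline\varphi_\ep, \overline\sigma_\ep)\to(\hat\varphi, \hat\sigma)$ strongly enough for the tracking terms), hence
\[
\mathcal J(\mathcal S(\hat u, \hat w), \hat u, \hat w)
+\tfrac12\limsup_{\ep}\big(\|\overline u_\ep-\overline u\|_{L^2(0,T;H)}^2+\|\overline w_\ep-\overline w\|_{L^2(0,T;H)}^2\big)
\le \mathcal J(\mathcal S(\overline u, \overline w), \overline u, \overline w).
\]
Since $(\overline u, \overline w)$ is optimal for $(\text{CP})$ and $(\hat u,\hat w)\in\mU$, we have $\mathcal J(\mathcal S(\overline u,\overline w),\overline u,\overline w)\le \mathcal J(\mathcal S(\hat u,\hat w),\hat u,\hat w)$; combining, the $\limsup$ of the penalization terms is $\le 0$, forcing $\overline u_\ep\to\overline u$ and $\overline w_\ep\to\overline w$ strongly in $L^2(0,T;H)$, and also $\mathcal J(\mathcal S(\hat u,\hat w),\hat u,\hat w)=\mathcal J(\mathcal S(\overline u,\overline w),\overline u,\overline w)$. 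Strong $L^2$-convergence gives $\hat u=\overline u$, $\hat w=\overline w$, hence $(\hat\varphi,\hat\sigma)=(\overline\varphi,\overline\sigma)$, and since the limit is independent of the subsequence a standard argument promotes the convergences to the full family. The main obstacle is precisely this lower-semicontinuity/upper-bound sandwich: one must be careful that the convergences supplied by Theorem~\ref{th:conv} are strong enough in the right topologies for every term of $\mathcal J$ — in particular the final-time term $\|\varphi(T)-\varphi_\Omega\|^2$ needs the $C^0([0,T];H)$ convergence of $\overline\varphi_\ep$, which is exactly what Theorem~\ref{th:conv} provides.
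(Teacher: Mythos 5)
Your proposal is correct and follows essentially the same strategy as the paper: extract a weak-$*$ limit $(\hat u,\hat w)$ from the boundedness of $\mathcal U$, use Theorem~\ref{th:conv} to pass to the limit in the states, compare with the competitor $(\overline u,\overline w)$ in the optimality inequality for $(\text{CP})_\ep$, and squeeze the penalization terms to zero via lower semicontinuity of $\mathcal J$ and optimality of $(\overline u,\overline w)$ for (CP). The only stylistic difference is that you plug in $(u,w)=(\overline u,\overline w)$ at the $\ep$-level from the start and extract both the identification $\hat u=\overline u$, $\hat w=\overline w$ and the strong $L^2$ convergence in a single $\liminf/\limsup$ step, whereas the paper first passes to the limit in the inequality for arbitrary competitors, specializes to $(\overline u,\overline w)$ to identify the limit, and then returns to the $\ep$-level inequality for the strong convergence; the underlying argument (and the implicit use of $\liminf a_\ep+\limsup b_\ep\le\limsup(a_\ep+b_\ep)$) is the same.
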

\begin{proof}[Proof of Theorem~\ref{th:opt_adap}]
Let ${(\overline u_\ep, 
\overline w_\ep)}_{\ep\in(0,\ep_0)}\subset \mU$
be an arbitrary family 
of optimal controls for (CP)$_\ep$, and 
for every $\ep\in(0,\ep_0)$ let 
$(\overline\varphi_\ep, \overline\sigma_\ep)
:=\mathcal S(\overline u_\ep, \overline w_\ep)$
be the corresponding nonlocal optimal state.
By definition of $\mU$, we deduce that there exists some 
$(\tilde u, \tilde w)\in\mU$ such that, as $\ep\searrow0$,
\begin{align*}
  \overline u_\ep \wstarto \tilde u \quad&\text{in } 
  L^\infty(Q)\cap H^1(0,T; H)\,,\\
  \overline w_\ep \wstarto \tilde w \quad&\text{in } 
  L^\infty(Q)\,.
\end{align*}
Moreover, 
setting $(\tilde\varphi, \tilde\sigma):=\mathcal S(\tilde u, \tilde w)$,
by  Theorem~\ref{th:conv} one has also, as
$\ep\searrow0$, that 
\begin{align*}
    \overline\varphi_\ep\to\tilde\varphi
    \quad&\text{in } C^0([0,T], H)\,,\\
    \overline\sigma_\ep\to\tilde\sigma
    \quad&\text{in } L^2(0,T, H)\,,\\
    \overline\sigma_\ep(t)\to\tilde\sigma(t)
    \quad&\text{in } H\quad\forall\,t\in[0,T]\,.
\end{align*}
Let now $(u,w)\in\mathcal U$ be arbitrary and let 
$(\varphi_\ep, \sigma_\ep):=\mathcal S_\ep(u,w)$, 
$(\varphi, \sigma):=\mathcal S(u,w)$
be 
the corresponding nonlocal and local optimal states.
Then, again by Theorem~\ref{th:conv} one has also,
as $\ep\searrow0$, that 
\begin{align*}
    \varphi_\ep\to\varphi
    \quad&\text{in } C^0([0,T], H)\,,\\
    \sigma_\ep\to\sigma
    \quad&\text{in } L^2(0,T, H)\,,\\
    \sigma_\ep(t)\to\sigma(t)
    \quad&\text{in } H\quad\forall\,t\in[0,T]\,.
\end{align*}
By definition of 
optimal control for (CP)$_\ep$ we can write the inequality  
\begin{align}
\nonumber
  &\mathcal J(\overline\varphi_\ep,  \overline\sigma_\ep,
  \overline u_\ep, \overline w_\ep)
  +\frac12\Vert \overline u_\ep-\overline u\Vert_{L^2(0,T; H)}^2
    +\frac12\Vert \overline w_\ep-\overline w\Vert_{L^2(0,T; H)}^2\\
    &\leq
    \mathcal J(\varphi_\ep, \sigma_\ep,
    u, w)
  +\frac12\Vert u-\overline u\Vert_{L^2(0,T; H)}^2
    +\frac12\Vert w-\overline w\Vert_{L^2(0,T; H)}^2
    \qquad\forall\,(u,w)\in\mU\,.
    \label{eq:aux_J}
\end{align}
The convergences above imply then by weak lower semicontinuity that 
\begin{align*}
  &\mathcal J(\tilde\varphi,  \tilde\sigma,
  \tilde u, \tilde w)
  +\frac12\Vert \tilde u-\overline u\Vert_{L^2(0,T; H)}^2
    +\frac12\Vert \tilde w-\overline w\Vert_{L^2(0,T; H)}^2\\
    &\leq
    \mathcal J(\varphi, \sigma,
    u, w)
  +\frac12\Vert u-\overline u\Vert_{L^2(0,T; H)}^2
    +\frac12\Vert w-\overline w\Vert_{L^2(0,T; H)}^2
    \qquad\forall\,(u,w)\in\mU\,.
\end{align*}
Since $(u,w)\in\mU$ is arbitrary, 
choosing now $(u,w)=(\overline u, \overline w)\in\mU$ and 
exploiting that $(\overline u, \overline w)$ is an optimal 
control for (CP) we infer that 
\begin{align*}
  &\mathcal J(\tilde\varphi,  \tilde\sigma,
  \tilde u, \tilde w)
  +\frac12\Vert \tilde u-\overline u\Vert_{L^2(0,T; H)}^2
    +\frac12\Vert \tilde w-\overline w\Vert_{L^2(0,T; H)}^2\\
    &\leq
    \mathcal J(\overline\varphi, \overline\sigma,
    \overline u, \overline w)
    \leq \mathcal J(\tilde\varphi,  \tilde\sigma,
  \tilde u, \tilde w) <+\infty\,,
\end{align*}
from which we deduce that $\tilde u=\overline u$ and $\tilde w=\overline w$,
hence also that $\overline \varphi=\tilde \varphi$, $\overline \sigma=\tilde \sigma$, and
\begin{align*}
  \overline u_\ep \wstarto \overline u \quad&\text{in } 
  L^\infty(Q)\cap H^1(0,T; H)\,,\\
  \overline w_\ep \wstarto \overline w \quad&\text{in } 
  L^\infty(Q)\,.
\end{align*}
It only remains to show the strong convergences 
of $(\overline u_\ep)_\ep$ and $(\overline w_\ep)_\ep$ in $L^2(0,T; H)$.
To this end, choosing $(u,w)=(\overline u, \overline w)$ in
\eqref{eq:aux_J} we infer 
\begin{align*}
  \mathcal J(\overline\varphi_\ep,  \overline\sigma_\ep,
  \overline u_\ep, \overline w_\ep)
  +\frac12\Vert \overline u_\ep-\overline u\Vert_{L^2(0,T; H)}^2
    +\frac12\Vert \overline w_\ep-\overline w\Vert_{L^2(0,T; H)}^2
    \leq
    \mathcal J(\varphi_\ep, \sigma_\ep,
    \overline u, \overline w)\,,
\end{align*}
where now
\begin{align*}
    \varphi_\ep\to\overline\varphi
    \quad&\text{in } C^0([0,T], H)\,,\\
    \sigma_\ep\to\overline\sigma
    \quad&\text{in } L^2(0,T, H)\,,\\
    \sigma_\ep(t)\to\overline\sigma(t)
    \quad&\text{in } H\quad\forall\,t\in[0,T]\,.
\end{align*}
Hence, by letting $\ep\searrow0$ one has first that 
\begin{align*}
  \mathcal J(\overline \varphi, \overline \sigma,
    \overline u, \overline w)
    &\leq\liminf_{\ep\searrow0}
  \mathcal J(\overline \varphi_\ep, \overline \sigma_\ep,
    \overline u_\ep, \overline w_\ep)\\
    &\leq\limsup_{\ep\searrow0}
    \mathcal J(\overline \varphi_\ep, \overline \sigma_\ep,
    \overline u_\ep, \overline w_\ep)\\
    &\leq\lim_{\ep\searrow0}\mathcal J(\varphi_\ep, \sigma_\ep,
    \overline u, \overline w)=
    \mathcal J(\overline \varphi, \overline \sigma,
    \overline u, \overline w)\,,
\end{align*}
and also that 
\begin{align*}
  \mathcal J(\overline \varphi, \overline \sigma,
    \overline u, \overline w)
    &\leq\liminf_{\ep\searrow0}\left[
  \mathcal J(\overline \varphi_\ep, \overline \sigma_\ep,
    \overline u_\ep, \overline w_\ep)
  +\frac12\Vert \overline u_\ep-\overline u\Vert_{L^2(0,T; H)}^2
    +\frac12\Vert \overline w_\ep-\overline w\Vert_{L^2(0,T; H)}^2
    \right]\\
    &\leq\limsup_{\ep\searrow0}\left[
    \mathcal J(\overline \varphi_\ep, \overline \sigma_\ep,
    \overline u_\ep, \overline w_\ep)+
  \frac12\Vert \overline u_\ep-\overline u\Vert_{L^2(0,T; H)}^2
    +\frac12\Vert \overline w_\ep-\overline w\Vert_{L^2(0,T; H)}^2
    \right]\\
    &\leq\lim_{\ep\searrow0}\mathcal J(\varphi_\ep, \sigma_\ep,
    \overline u, \overline w)=
    \mathcal J(\overline \varphi, \overline \sigma,
    \overline u, \overline w)\,,
\end{align*}
yielding 
\[
\lim_{\ep\searrow0}
\mathcal J(\overline \varphi_\ep, \overline \sigma_\ep,
    \overline u_\ep, \overline w_\ep)
=\mathcal J(\overline \varphi, \overline \sigma,
    \overline u, \overline w)
\]
and
\[
\lim_{\ep\searrow0}\left[
\frac12\Vert \overline u_\ep-\overline u\Vert_{L^2(0,T; H)}^2
    +\frac12\Vert \overline w_\ep-\overline w\Vert_{L^2(0,T; H)}^2
\right]=0\,.
\]
This concludes the proof.
\end{proof}

\subsection{First order conditions for the adapted problem}
Let us focus here on the first order conditions for optimality 
for the adapted problem (CP)$_\ep$. Thanks to the results in 
\cite{fornoni}, these can be written in terms of a variational 
inequality involving the solutions of a suitable adjoint, or dual, system.

In this direction, the following result ensures that the dual system 
is indeed well posed.
\begin{theorem}[Well posedness of the nonlocal dual system]
\label{th:wp_dual}
Assume {\bf A1}--{\bf A7}, {\bf B1}--{\bf B5}, {\bf C1}--{\bf C4},
and let $\ep\in(0,\ep_0)$.
Let $(\overline u_\ep, \overline w_\ep)\in\mU$ and let 
$(\overline\varphi_\ep, \overline\sigma_\ep):=
\mathcal S_\ep(\overline u, \overline w)$ be the solution to the nonlocal 
state system.
Then, there exists a unique triplet $(p_\ep, q_\ep, r_\ep)$, with 
\begin{align}
    \label{dual1'}
    &p_\ep\in L^2(0,T; W)\,,\\
    \label{dual2'}
    &q_\ep\in L^2(0,T; H)\,,\\
    \label{dual3'}
    &r_\ep\in H^1(0,T; H)\cap L^\infty(0,T; V)\cap L^2(0,T; W)\,,\\
    \label{dual4'}
    &p_\ep+\tau q_\ep \in H^1(0,T; H)\,,
\end{align}
such that 
\begin{equation}
    \label{dual5'}
    (p_\ep+\tau q_\ep)(T)=\alpha_\Omega(\overline\varphi_\ep(T)-\varphi_\Omega)\,, \qquad
    r_\ep(T)=0\,,
\end{equation}
and
\begin{align}
\nonumber
&-\partial_t(p_\ep+\tau q_\ep)
+B_\ep (q_\ep)+\psi''(\overline\varphi_\ep)q_\ep
+\chi\Delta r_\ep
+\chi P(\overline\varphi_\ep)(p_\ep-r_\ep)\\
\label{eq:dual1}
&\qquad=
P'(\overline\varphi_\ep)
(\overline\sigma_\ep+\chi(1-\overline\varphi_\ep)-\overline\mu_\ep)
(p_\ep-r_\ep)
-h'(\overline\varphi_\ep)\overline u_\ep p_\ep+
\alpha_Q(\overline\varphi_\ep-\varphi_Q)\,,\\
\label{eq:dual2}
&-q_\ep
-\Delta p_\ep+P(\overline \varphi_\ep)(p_\ep-r_\ep)=0\,,\\
\label{eq:dual3}
&-\partial_t r_\ep-\Delta r_\ep-
P(\overline \varphi_\ep)(p_\ep-r_\ep)-\chi q_\ep=
\beta_Q(\overline\sigma_\ep-\sigma_Q)\,.
\end{align}
\end{theorem}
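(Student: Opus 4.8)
The system \eqref{eq:dual1}--\eqref{eq:dual3}, together with the final conditions \eqref{dual5'}, is \emph{linear} in the unknown $(p_\ep,q_\ep,r_\ep)$: every coefficient---namely $\psi''(\overline\varphi_\ep)$, $P(\overline\varphi_\ep)$, $P'(\overline\varphi_\ep)\big(\overline\sigma_\ep+\chi(1-\overline\varphi_\ep)-\overline\mu_\ep\big)$ and $h'(\overline\varphi_\ep)\overline u_\ep$, as well as the data $\alpha_Q(\overline\varphi_\ep-\varphi_Q)$ and $\beta_Q(\overline\sigma_\ep-\sigma_Q)$---depends only on the \emph{given} strong state solution, $\overline\mu_\ep$ being the associated potential $\tau\overline\varphi_\ep+B_\ep(\overline\varphi_\ep)+\psi'(\overline\varphi_\ep)-\chi\overline\sigma_\ep$. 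By Theorem~\ref{wp_nl_strong} and $d\le3$ one has $\overline\varphi_\ep\in H^1(0,T;H^2(\Omega))\embed C^0(\overline Q)$, $\overline\sigma_\ep\in L^\infty(0,T;V)$ and $\overline\mu_\ep\in L^\infty(0,T;W)$; hence $\psi''(\overline\varphi_\ep)\in L^\infty(Q)$ by \textbf{A2}, $P(\overline\varphi_\ep),P'(\overline\varphi_\ep),h'(\overline\varphi_\ep)\in L^\infty(Q)$ by \textbf{B4}, and $\overline\sigma_\ep+\chi(1-\overline\varphi_\ep)-\overline\mu_\ep\in L^\infty(0,T;L^6(\Omega))$. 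The plan is the standard one for backward linear parabolic systems: reverse time $t\mapsto T-t$ to obtain a forward Cauchy problem, solve it via a Faedo--Galerkin scheme, derive a priori bounds uniform in the discretisation parameter, pass to the limit using only weak/weak-$*$ compactness (no Aubin--Lions being needed, the system being linear), and deduce uniqueness by applying the same estimate to the difference of two solutions. This parallels---and the statement in fact follows directly from---the well-posedness of the nonlocal adjoint system established in \cite{fornoni}.

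A useful structural observation is that \eqref{eq:dual2}, rewritten as $-\Delta p_\ep+P(\overline\varphi_\ep)\,p_\ep=q_\ep+P(\overline\varphi_\ep)\,r_\ep$ with homogeneous Neumann boundary condition, is a \emph{uniformly coercive} elliptic problem, since $P(\overline\varphi_\ep)\ge P_0>0$ by \textbf{B4}---this is precisely where the strict positivity of the proliferation function enters. Hence, for a.e.\ $t$, $p_\ep(t)\in W$ is uniquely determined by $(q_\ep(t),r_\ep(t))$, and elliptic regularity gives
\[
\|p_\ep\|_{L^2(0,T;W)}\le C_\ep\big(\|q_\ep\|_{L^2(0,T;H)}+\|r_\ep\|_{L^2(0,T;H)}\big).
\]
Thus the only genuinely time-differentiated unknowns are $r_\ep$ and $y_\ep:=p_\ep+\tau q_\ep$, and in the Galerkin scheme one discretises $q_\ep$ and $r_\ep$ in the eigenbasis $\{e_j\}_j\subset W$ of the Neumann Laplacian, slaving $p_\ep$ through the discrete (still coercive) version of \eqref{eq:dual2}.

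The heart of the argument is the a priori estimate, which is the dual counterpart of the energy estimate of Section~\ref{sec:state}. Testing the discrete equations by a suitable combination built from $q_\ep$, $\partial_t p_\ep$, $r_\ep$ and $\partial_t r_\ep$---arranged so that only the prescribed endpoint $y_\ep(T)=\alpha_\Omega(\overline\varphi_\ep(T)-\varphi_\Omega)$ appears at $t=T$, merely the combination $p_\ep+\tau q_\ep$ being prescribed there---and integrating backward from $T$, one exploits the nonnegativity $\int_\Omega B_\ep(v)v\,\xd x=2E_\ep(v)\ge0$, the bound $\psi''\ge-C_\psi$ of \textbf{A2}, the coercivity $P\ge P_0$, and Young's inequality. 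The chemotaxis cross-terms---$\chi\int_\Omega\Delta r_\ep\,q_\ep$ from \eqref{eq:dual1} and $\chi\int_\Omega q_\ep\,r_\ep$, $\chi\int_\Omega q_\ep\,\partial_t r_\ep$ from \eqref{eq:dual3}, together with $\chi P(\overline\varphi_\ep)(p_\ep-r_\ep)q_\ep$---are absorbed into the dissipative terms thanks to the smallness of $\chi$ encoded in \textbf{B2} (equivalently $\chi<\sqrt{c_\psi}$ in \textbf{A3}), exactly as in the forward analysis. A backward Grönwall lemma then yields, for a constant $C_\ep$ depending on $\ep$ and the data,
\[
\|q_\ep\|_{L^2(0,T;H)}+\|p_\ep\|_{L^2(0,T;W)}+\|y_\ep\|_{L^\infty(0,T;H)}+\|r_\ep\|_{H^1(0,T;H)\cap L^\infty(0,T;V)\cap L^2(0,T;W)}\le C_\ep,
\]
and a comparison argument in \eqref{eq:dual1}---whose remaining terms $B_\ep(q_\ep)$, $\psi''(\overline\varphi_\ep)q_\ep$, $\chi\Delta r_\ep$, $\chi P(\overline\varphi_\ep)(p_\ep-r_\ep)$ and the forcing all lie in $L^2(0,T;H)$ by the bounds above together with $W\embed L^\infty(\Omega)$, $V\embed L^6(\Omega)$ and the $L^\infty(Q)$/$L^\infty(0,T;L^6(\Omega))$ bounds on the coefficients---upgrades $y_\ep=p_\ep+\tau q_\ep$ to $H^1(0,T;H)$, establishing \eqref{dual1'}--\eqref{dual4'}.

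Passing to the limit in the Galerkin parameter is then immediate: the uniform bounds provide weak/weak-$*$ limits $(p_\ep,q_\ep,r_\ep)$ in the spaces \eqref{dual1'}--\eqref{dual4'}, and since every term of \eqref{eq:dual1}--\eqref{eq:dual3} is linear in the unknowns, weak convergence suffices to identify the limit as a solution, the endpoint conditions \eqref{dual5'} being recovered from the $H^1(0,T;H)$ bounds on $y_\ep$ and $r_\ep$ and the convergence of the Galerkin data at $t=T$. Uniqueness follows at once, the difference of two solutions solving the homogeneous system with zero endpoint data, to which the a priori estimate applies. The one genuine difficulty---just as in \cite{fornoni} and mirroring the forward problem---is closing the energy estimate in the presence of the chemotaxis coupling, made more delicate by the fact that $q_\ep$ enjoys only $L^2(0,T;H)$ regularity, so that the term $\chi\Delta r_\ep$ in \eqref{eq:dual1} must be paired directly with $q_\ep$ in $H$ (using $r_\ep\in L^2(0,T;W)$) rather than integrated by parts; this is exactly why the strengthened spectral lower bound \textbf{B2} is required.
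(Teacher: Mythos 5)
The paper's entire proof of Theorem~\ref{th:wp_dual} is the one-line citation ``This follows directly from \cite[Thm.~4.2]{fornoni},'' and you correctly flag this (``the statement in fact follows directly from the well-posedness of the nonlocal adjoint system established in \cite{fornoni}''). What you then add is a plausible reconstruction of how such a proof goes: linearity, backward-in-time Galerkin, elliptic slaving of $p_\ep$ through \eqref{eq:dual2} using $P\ge P_0>0$, energy estimates, and uniqueness from the homogeneous estimate. The elliptic-slaving observation is correct and genuinely clarifying; the overall strategy matches what is done in the cited reference and what the paper itself does for the $\ep$-uniform estimates in Section~\ref{ssec:est_dual}.

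Two details of your sketch are, however, imprecise. First, the test functions: you say the discrete equations are tested by ``a suitable combination built from $q_\ep$, $\partial_t p_\ep$, $r_\ep$ and $\partial_t r_\ep$ --- arranged so that only the prescribed endpoint $y_\ep(T)$ appears at $t=T$.'' Testing \eqref{eq:dual1} by $q_\ep$ does \emph{not} achieve this: the boundary term produced at $t=T$ involves $\|q_\ep(T)\|_H^2$ (and, if you also test \eqref{eq:dual2} by $\partial_t p_\ep$, $\|\nabla p_\ep(T)\|_H^2$), neither of which is prescribed --- only the combination $y_\ep(T)=(p_\ep+\tau q_\ep)(T)$ is. To make only $y_\ep(T)$ appear one should test \eqref{eq:dual1} by $y_\ep$ itself, as the paper does in Section~\ref{ssec:est_dual} (where \eqref{eq:dual1} is tested by $p_\ep+\tau q_\ep$, \eqref{eq:dual2} by $C_\tau p_\ep$, \eqref{eq:dual3} by $-\partial_t r_\ep+C_\tau r_\ep$). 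Second, the closing of the chemotaxis terms: you ascribe it to the spectral lower bound \textbf{B2}, but in the dual estimate it is handled differently --- the paper adds $\tau C_\tau\|q_\ep\|_H^2$ to both sides and takes the weight $C_\tau$ large, using the coercivity $P\ge P_0$ and the boundedness of the coefficients, not the inequality $\psi''+J_\ep*1>\chi^2$. The bound \textbf{B2} is what makes the \emph{forward} strong state system well-posed (which in turn makes the coefficients of the adjoint system smooth), rather than being the mechanism that closes the adjoint energy estimate.
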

\begin{proof}[Proof of Theorem~\ref{th:wp_dual}]
  This follows directly from \cite[Thm.~4.2]{fornoni}.
\end{proof}

The well posedness of the adjoint system allows then to 
formulate first order conditions for optimality for the
adapted problem (CP)$_\ep$. In this sense, we have the following result.
\begin{theorem}[First order conditions for (CP)$_\ep$]
    \label{th:foc_nl}
    Assume {\bf A1}--{\bf A7}, {\bf B1}--{\bf B5}, {\bf C1}--{\bf C4},
    let $\ep\in(0,\ep_0)$, 
    let $(\overline u, \overline w)\in\mU$
    be an optimal control for (CP),
    let $(\overline u_\ep, \overline w_\ep)\in\mU$
    be an optimal control for (CP)$_\ep$, and
    let $(\overline\varphi_\ep, \overline\sigma_\ep):=
    \mathcal S_\ep(\overline u_\ep, \overline w_\ep)$ 
    be the respective optimal state of the nonlocal system.
    Then, if $(p_\ep, q_\ep, r_\ep)$ is the unique solution 
    to the nonlocal dual system in the sense of 
    \eqref{dual1'}--\eqref{eq:dual3}, it holds
    for every $(u,w)\in\mU$ that 
    \begin{equation}
        \label{eq:foc_nl}
        \int_Q(-h(\overline\varphi_\ep)p_\ep
        +\alpha_u\overline u_\ep + \overline u_\ep-\overline u)
        (u-\overline u_\ep)\,\xd x\xd s
        +\int_Q 
        (r_\ep+\beta_w\overline w_\ep + \overline w_\ep - \overline w)
        (w-\overline w_\ep)\,\xd x\xd s
        \geq0\,.
    \end{equation}
\end{theorem}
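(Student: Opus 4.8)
The plan is to derive the variational inequality as the first-order necessary condition of the reduced adapted control problem, using the well-posedness of the nonlocal dual system from Theorem~\ref{th:wp_dual} together with the differentiability of the nonlocal control-to-state map established in \cite{fornoni}. First I would recall that, under {\bf A1}--{\bf A7}, {\bf B1}--{\bf B5}, {\bf C1}--{\bf C4}, the solution operator $\mathcal S_\ep:\mU\to[W^{1,\infty}(0,T;H)\cap H^1(0,T;H^2(\Omega))]\times[H^1(0,T;H)\cap L^\infty(0,T;V)\cap L^2(0,T;W)]$ is Fréchet differentiable (this is part of the results of \cite{fornoni} that underlie Theorem~\ref{th:wp_dual}); its linearisation at $(\overline u_\ep,\overline w_\ep)$ in a direction $(k,\ell)$, say $(\eta_\ep,\rho_\ep)=\mathcal S_\ep'(\overline u_\ep,\overline w_\ep)[k,\ell]$, solves the linearised state system obtained by formally differentiating \eqref{nl6_str}--\eqref{nl7_str}. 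Since $\mU$ is convex, the necessary optimality condition for the reduced functional $\widetilde{\mathcal J}_\ep(u,w):=\mathcal J_\ep(\mathcal S_\ep(u,w),u,w)$ at the optimal $(\overline u_\ep,\overline w_\ep)$ reads $\widetilde{\mathcal J}_\ep'(\overline u_\ep,\overline w_\ep)[u-\overline u_\ep,w-\overline w_\ep]\ge 0$ for all $(u,w)\in\mU$.

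The second step is to expand this inequality via the chain rule. The derivative of $\mathcal J_\ep$ with respect to $(u,w)$ directly contributes the terms $\int_Q(\alpha_u\overline u_\ep+\overline u_\ep-\overline u)(u-\overline u_\ep)+\int_Q(\beta_w\overline w_\ep+\overline w_\ep-\overline w)(w-\overline w_\ep)$, where the extra summands $\overline u_\ep-\overline u$ and $\overline w_\ep-\overline w$ come precisely from the adapting penalty $\tfrac12\|u-\overline u\|^2_{L^2(0,T;H)}+\tfrac12\|w-\overline w\|^2_{L^2(0,T;H)}$ in $\mathcal J_\ep$. The derivative with respect to the state, evaluated against the linearised state $(\eta_\ep,\rho_\ep)$, gives $\alpha_\Omega\int_\Omega(\overline\varphi_\ep(T)-\varphi_\Omega)\eta_\ep(T)+\alpha_Q\int_Q(\overline\varphi_\ep-\varphi_Q)\eta_\ep+\beta_Q\int_Q(\overline\sigma_\ep-\sigma_Q)\rho_\ep$. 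The key manoeuvre is then to test the dual system \eqref{eq:dual1}--\eqref{eq:dual3} against the linearised state $(\eta_\ep,\rho_\ep)$ (and, correspondingly, the linearised state system against $(p_\ep,q_\ep,r_\ep)$), integrate by parts in space and time, and use the terminal conditions \eqref{dual5'} together with $\eta_\ep(0)=0$, $\rho_\ep(0)=0$. Because the dual system in Theorem~\ref{th:wp_dual} is designed to be the formal adjoint of the linearisation, all the ``state-like'' bilinear contributions cancel, and one is left exactly with the identity that rewrites $\alpha_\Omega\int_\Omega(\overline\varphi_\ep(T)-\varphi_\Omega)\eta_\ep(T)+\alpha_Q\int_Q(\overline\varphi_\ep-\varphi_Q)\eta_\ep+\beta_Q\int_Q(\overline\sigma_\ep-\sigma_Q)\rho_\ep=\int_Q(-h(\overline\varphi_\ep)p_\ep)k+\int_Q r_\ep\,\ell$, where $(k,\ell)=(u-\overline u_\ep,w-\overline w_\ep)$. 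Substituting this into the expanded necessary condition yields \eqref{eq:foc_nl}.

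Since the bulk of this argument — differentiability of $\mathcal S_\ep$, the precise form of the linearised and adjoint systems, and the integration-by-parts identity connecting them — is carried out in \cite{fornoni} for the nonlocal problem with cost functional $\mathcal J$, the cleanest route is to invoke that result directly and only track the modification induced by passing from $\mathcal J$ to $\mathcal J_\ep$. Concretely: \cite[Thm.~4.2 (and its proof)]{fornoni} establishes \eqref{eq:foc_nl} with $\mathcal J$ in place of $\mathcal J_\ep$, i.e.\ without the two terms $\overline u_\ep-\overline u$ and $\overline w_\ep-\overline w$; the perturbation $\tfrac12\|u-\overline u\|^2+\tfrac12\|w-\overline w\|^2$ is smooth and convex in $(u,w)$ and independent of the state, so its Gateaux derivative at $(\overline u_\ep,\overline w_\ep)$ in the direction $(u-\overline u_\ep,w-\overline w_\ep)$ adds exactly $\int_Q(\overline u_\ep-\overline u)(u-\overline u_\ep)+\int_Q(\overline w_\ep-\overline w)(w-\overline w_\ep)$ to the left-hand side, and the adjoint system is unaffected since the added term does not depend on the state. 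This gives \eqref{eq:foc_nl} with no further work. The only point requiring a modicum of care — and the step I expect to be the main (if minor) obstacle — is checking that the function spaces in {\bf C4} (namely $P,h\in C^2(\mathbb R)\cap W^{1,\infty}(\mathbb R)$, slightly stronger than the $C^1$ regularity in {\bf B4}) and the regularity \eqref{nl1_str}--\eqref{nl4} of the strong nonlocal state are enough to justify all the integrations by parts and the duality pairing $\langle\partial_t r_\ep,\eta_\ep\rangle$, $\langle\partial_t(p_\ep+\tau q_\ep),\cdot\rangle$ etc.; but this is exactly the functional-analytic setting already verified in \cite{fornoni}, so the verification amounts to citing the corresponding estimates there.
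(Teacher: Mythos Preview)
Your proposal is correct and follows essentially the same approach as the paper: the paper's proof is the one-line statement that the result follows directly from \cite[Thm.~4.8]{fornoni}, and your argument amounts to exactly that citation together with the (correct) observation that the added quadratic penalty $\tfrac12\|u-\overline u\|^2+\tfrac12\|w-\overline w\|^2$ contributes precisely the extra terms $\overline u_\ep-\overline u$ and $\overline w_\ep-\overline w$ in \eqref{eq:foc_nl} without affecting the adjoint system. The only slip is the reference number: the first-order conditions in \cite{fornoni} are Thm.~4.8 there, while Thm.~4.2 is the well-posedness of the dual system (cf.\ the citation in Theorem~\ref{th:wp_dual}).
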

\begin{proof}[Proof of Theorem~\ref{th:foc_nl}]
    This follows directly from \cite[Thm.~4.8]{fornoni}.
\end{proof}

\section{Nonlocal-to-local convergence of the optimal control problems}
\label{sec:n-l opt cont}
This section is devoted to proving first order conditions for 
optimality for the limit problem (CP), by passing to the limit as 
$\ep\searrow0$ in the first order conditions for (CP)$_\ep$.
This requires two passages to the limit, one in the dual system
obtained in Theorem~\ref{th:wp_dual} and one in the 
variational inequality of Theorem~\ref{th:opt_adap}.
The two main results that we prove are the following.

\begin{theorem}[Convergence of the dual systems]
    \label{th:conv_dual}
    Assume {\bf A1}--{\bf A7}, {\bf B1}--{\bf B5}, {\bf C1}--{\bf C4},
    let $\{(\overline u_\ep, \overline w_\ep)\}_{\ep\in(0,\ep_0)}
    \subset\mU$ be a family of admissible controls, 
    let $\{(\overline\varphi_\ep,
    \overline\sigma_\ep):=
    \mathcal S_\ep(\overline u_\ep, 
    \overline w_\ep)\}_{\ep\in(0,\ep_0)}$ be the corresponding family 
    of nonlocal states, and let $\{(p_\ep, q_\ep, r_\ep)\}_{\ep\in(0,\ep_0)}$ be the corresponding solutions
    to the nonlocal dual problem in the sense of \eqref{dual1'}--\eqref{eq:dual3}. Let also $(\overline u, \overline w)\in\mU$
    be an admissible control such that, as $\ep\searrow0$,
    \begin{align*}
    \overline u_\ep\wstarto \overline u \quad&\text{in } L^\infty(Q)\cap H^1(0,T; H)\,,\\
    \overline w_\ep\wstarto \overline w \quad&\text{in } L^\infty(Q)\,,
    \end{align*}
    and let 
    $(\overline\varphi, \overline\sigma):=
    \mathcal S(\overline u, \overline w)$ be the corresponding local state.
    Then, there exists a 
    unique triplet $(p, q, r)$, with 
\begin{align}
    \label{dual1'_loc}
    &p\in L^2(0,T; W)\,,\\
    \label{dual2'_loc}
    &q\in L^2(0,T; V)\,,\\
    \label{dual3'_loc}
    &r\in H^1(0,T; H)\cap L^\infty(0,T; V)\cap L^2(0,T; W)\,,\\
    \label{dual4'_loc}
    &p+\tau q \in H^1(0,T; V^*)\cap L^\infty(0,T; H)\,,
\end{align}
such that 
\begin{equation}
    \label{dual5'_loc}
    (p+\tau q)(T)=
    \alpha_\Omega(\overline\varphi(T)-\varphi_\Omega)\,, \qquad
    r(T)=0\,,
\end{equation}
and
\begin{align}
\nonumber
&-\partial_t(p+\tau q)
+B(q)+\psi''(\bar{\varphi})q
+\chi\Delta r
+\chi P(\overline\varphi)(p-r)\\
\label{eq:dual1_loc}
&\qquad=
P'(\overline\varphi)
(\overline\sigma+\chi(1-\overline\varphi)-\overline\mu)
(p-r)
-h'(\overline\varphi)\overline u p+
\alpha_Q(\overline\varphi-\varphi_Q)\,,\\
\label{eq:dual2_loc}
&-q
-\Delta p+P(\overline \varphi)(p-r)=0\,,\\
\label{eq:dual3_loc}
&-\partial_t r-\Delta r-
P(\overline \varphi)(p-r)-\chi q=
\beta_Q(\overline\sigma-\sigma_Q)\,,
\end{align}
where $\bar{\mu}:=\tau\partial_t\bar{\varphi}-\Delta\bar{\varphi}+\psi'(\bar{\varphi})-\chi\bar{\sigma}$. 
Additionally, as $\ep\searrow0$ there holds 
\begin{align}
    \label{conv1_dual}
    p_\ep\to p \quad&\text{in } L^2(0,T; V)\,,\\
    \label{conv2_dual}
    p_\ep\rightharpoonup p \quad&\text{in } L^2(0,T; W)\,,\\
    \label{conv3_dual}
    q_\ep\to q\quad&\text{in } L^2(0,T; H)\,,\\
    \label{conv4_dual}
    r_\ep\to r 
    \quad&\text{in } C^0([0,T]; H)\cap L^2(0,T; V)\,,\\
    \label{conv5_dual}
    r_\ep\wstarto r 
    \quad&\text{in } H^1(0,T; H)\cap L^\infty(0,T; V)\cap L^2(0,T; W)\,,\\
    \label{conv6_dual}
    p_\ep+\tau q_\ep\to p+\tau q
    \quad&\text{in } C^0([0,T]; V^*)\,,\\
    \label{conv7_dual}
    p_\ep+\tau q_\ep\wstarto p+\tau q
    \quad&\text{in } H^1(0,T; V^*)\cap L^\infty(0,T; H)\,.
\end{align}
\end{theorem}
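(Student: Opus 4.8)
The plan is to derive $\ep$-independent a-priori estimates for the nonlocal dual triplet $(p_\ep,q_\ep,r_\ep)$, pass to the limit by compactness, and identify the limit as the (unique) solution of the local dual system. Since the dual system is linear and backward in time, the natural first step is to perform the standard backward-in-time energy estimate: multiply \eqref{eq:dual1} by $q_\ep$ (or rather by $p_\ep+\tau q_\ep$ together with its time derivative), \eqref{eq:dual2} by a suitable combination involving $\Delta p_\ep$ and $q_\ep$, and \eqref{eq:dual3} by $r_\ep$ and $-\partial_t r_\ep$, then sum and integrate over $(t,T)$. The crucial structural point is that the ``bad'' terms $\psi''(\overline\varphi_\ep)q_\ep$ and $B_\ep(q_\ep)$ combine with a positive sign: by assumption {\bf B2} one has $\psi''(r)+J_\ep*1 \geq \chi^2>0$ uniformly, which is precisely what makes the quadratic form in $q_\ep$ coercive — this is the nonlocal analogue of the argument already used in the state-system estimates. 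The terms carrying $P'(\overline\varphi_\ep)(\overline\sigma_\ep+\chi(1-\overline\varphi_\ep)-\overline\mu_\ep)$ are the genuinely delicate ones; here one uses that $P'\in L^\infty$ (assumption {\bf B4}/{\bf C4}) and that, by Theorem~\ref{th:conv}, the factor $\overline\sigma_\ep+\chi(1-\overline\varphi_\ep)-\overline\mu_\ep$ is bounded in $L^2(0,T;L^6(\Omega))$ uniformly in $\ep$, so this term is controlled in $L^2(0,T;L^{6/5})\hookrightarrow L^2(0,T;V^*)$ and can be absorbed after testing against $p_\ep-r_\ep$, with the $V$-norm of $p_\ep$ and the $L^\infty$-$L^2$ plus $L^2$-$V$ norm of $r_\ep$ on the left. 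The remaining source terms $h'(\overline\varphi_\ep)\overline u_\ep p_\ep$, $\alpha_Q(\overline\varphi_\ep-\varphi_Q)$, $\beta_Q(\overline\sigma_\ep-\sigma_Q)$ are harmless: the first because $h'\in L^\infty$ and $\overline u_\ep$ is bounded in $L^\infty(Q)$, the latter two because of the uniform bounds from Theorem~\ref{th:conv}. A backward Gronwall argument then yields
\begin{align*}
\|p_\ep\|_{L^2(0,T;V)} + \|q_\ep\|_{L^2(0,T;H)} + \|r_\ep\|_{H^1(0,T;H)\cap L^\infty(0,T;V)} + \|p_\ep+\tau q_\ep\|_{H^1(0,T;V^*)\cap L^\infty(0,T;H)} \leq M,
\end{align*}
and then, by elliptic comparison in \eqref{eq:dual2}–\eqref{eq:dual3} using $B_\ep(p_\ep)$-type test functions (as in the state estimate \eqref{eq:est8}), one upgrades to $\|p_\ep\|_{L^2(0,T;W)}+\|r_\ep\|_{L^2(0,T;W)}\leq M$.

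Next I would extract weakly/weak-$*$ convergent subsequences $p_\ep\rightharpoonup p$ in $L^2(0,T;W)$, $q_\ep\rightharpoonup q$ in $L^2(0,T;H)$, $r_\ep\wstarto r$ in $H^1(0,T;H)\cap L^\infty(0,T;V)\cap L^2(0,T;W)$, and $p_\ep+\tau q_\ep\wstarto p+\tau q$ in $H^1(0,T;V^*)\cap L^\infty(0,T;H)$. The Aubin–Lions lemma gives $r_\ep\to r$ in $C^0([0,T];H)\cap L^2(0,T;V)$ and $p_\ep+\tau q_\ep\to p+\tau q$ in $C^0([0,T];V^*)$, which in particular passes the final-time conditions \eqref{dual5'} to \eqref{dual5'_loc}, using $\overline\varphi_\ep(T)\to\overline\varphi(T)$ in $H$ from Theorem~\ref{th:conv}. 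For the nonlocal term $B_\ep(q_\ep)$ one invokes the nonlocal-to-local convergence of $B_\ep$ established in \cite{DST2} (as used for the state system): since $q_\ep$ is only bounded in $L^2(0,T;H)$ this requires care — one should test the dual equation against a fixed smooth $\zeta$, write $\int B_\ep(q_\ep)\zeta = \int q_\ep B_\ep(\zeta)$ by symmetry of $J_\ep$, and use $B_\ep(\zeta)\to B(\zeta)$ strongly to get $B_\ep(q_\ep)\rightharpoonup B(q)$ in the sense of distributions (equivalently weakly in $L^2(0,T;V^*)$ once one knows $q\in L^2(0,T;V)$, which itself follows a posteriori from \eqref{eq:dual2_loc} by elliptic regularity since $p\in L^2(0,T;W)$ and $r\in L^2(0,T;H)$). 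For the nonlinear coefficients I would use $\psi''(\overline\varphi_\ep)\to\psi''(\overline\varphi)$, $P(\overline\varphi_\ep)\to P(\overline\varphi)$, $P'(\overline\varphi_\ep)\to P'(\overline\varphi)$, $h'(\overline\varphi_\ep)\to h'(\overline\varphi)$ strongly in every $L^r(Q)$, $r<\infty$, thanks to the strong convergence $\overline\varphi_\ep\to\overline\varphi$ in $C^0([0,T];H)$ and the polynomial growth of $\psi''$; combined with the weak convergences of $q_\ep$, $p_\ep$, $r_\ep$ this lets each product pass to the limit (e.g.\ $\psi''(\overline\varphi_\ep)q_\ep\rightharpoonup\psi''(\overline\varphi)q$ in $L^1(Q)$, upgraded to a better space by interpolating the uniform $L^2(0,T;V)$–$L^2(0,T;H)$ bounds). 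Passing to the limit in each of \eqref{eq:dual1}–\eqref{eq:dual3} then yields \eqref{eq:dual1_loc}–\eqref{eq:dual3_loc}.

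Uniqueness of $(p,q,r)$ follows by a linear backward energy estimate for the difference of two solutions — identical in structure to the a-priori estimate but without source terms — again using $\psi''(\overline\varphi)\geq-C_\psi$ together with $\chi<\sqrt{c_\psi}$ and Gronwall; since the limit is unique, the whole family (not merely a subsequence) converges. Finally, the \emph{strong} convergences \eqref{conv1_dual} and \eqref{conv3_dual} are obtained by a standard energy-identity argument: write the energy balance for $(p_\ep,q_\ep,r_\ep)$ and for the limit $(p,q,r)$, take $\limsup$ as $\ep\searrow0$ of the former using the already-established weak convergences and the strong convergence of the coefficients, and compare with the latter to get $\limsup_\ep\bigl(\|p_\ep\|_{L^2(0,T;V)}^2+\|q_\ep\|_{L^2(0,T;H)}^2+\dots\bigr)\leq$ the corresponding limit quantity; combined with weak lower semicontinuity this forces strong convergence, and then \eqref{conv4_dual}–\eqref{conv7_dual} follow. \textbf{The main obstacle} I anticipate is controlling the term $P'(\overline\varphi_\ep)(\overline\sigma_\ep+\chi(1-\overline\varphi_\ep)-\overline\mu_\ep)(p_\ep-r_\ep)$ uniformly in $\ep$: one only has $\overline\mu_\ep$ bounded in $L^2(0,T;V)$ (not better), so this product sits in $L^1(0,T;L^{3/2})$ at best after using $p_\ep-r_\ep\in L^2(0,T;L^6)$, and absorbing it into the left-hand side requires a careful Gronwall bookkeeping — in particular it is here that the uniform-in-$\ep$ $L^2(0,T;V)$ bound on $\overline\mu_\ep$ from Theorem~\ref{th:conv} is essential, and one must check that the implicit constant in the backward Gronwall inequality does not blow up as $\ep\to0$.
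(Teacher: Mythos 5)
Your overall plan (uniform estimates, compactness, limit identification, uniqueness via a backward energy estimate) matches the paper's. However, the central mechanism you identify for closing the a-priori estimate is wrong, and it matters.

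You claim that {\bf B2}, i.e.\ $\psi''(r)+(J_\ep*1)(x)\geq\chi^2$, makes the quadratic form coming from $B_\ep(q_\ep)+\psi''(\overline\varphi_\ep)q_\ep$ coercive in $q_\ep$. This is not what happens. Testing by $q_\ep$ gives
\begin{align*}
\int_\Omega \bigl(B_\ep(q_\ep)+\psi''(\overline\varphi_\ep)q_\ep\bigr)q_\ep\,\xd x
=2E_\ep(q_\ep)+\int_\Omega\psi''(\overline\varphi_\ep)|q_\ep|^2\,\xd x\,,
\end{align*}
and the pointwise bound $\psi''+J_\ep*1\geq\chi^2$ does \emph{not} transfer to this expression, because
\begin{align*}
\int_\Omega B_\ep(q_\ep)q_\ep\,\xd x
=\int_\Omega(J_\ep*1)|q_\ep|^2\,\xd x-\int_\Omega(J_\ep*q_\ep)q_\ep\,\xd x\,,
\end{align*}
and the convolution cross term is not controlled by $\chi^2\|q_\ep\|_H^2$ (both summands scale like $\ep^{-2}$). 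Taking $q_\ep\equiv 1$ shows the quadratic form equals $\int_\Omega\psi''(\overline\varphi_\ep)\,\xd x$, which can be as negative as $-C_\psi|\Omega|$; no coercivity in $L^2$ follows from {\bf B2} alone. The only unconditional structural facts are $E_\ep(q_\ep)\geq0$ and $\psi''\geq-C_\psi$, and these by themselves do not give you an $L^2$-in-space bound on $q_\ep$. Moreover, testing \eqref{eq:dual1} by $q_\ep$ (your first suggestion) destroys the time structure, since $-\partial_t(p_\ep+\tau q_\ep)q_\ep$ is not a total derivative; the ``or rather by $p_\ep+\tau q_\ep$ together with its time derivative'' hedge does not repair this, since $\partial_t(p_\ep+\tau q_\ep)$ is only an $L^2(0,T;V^*)$ object and cannot be used as a test function.

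The paper closes the estimate by a different and essential device that your sketch omits: one tests \eqref{eq:dual1} by $p_\ep+\tau q_\ep$, \eqref{eq:dual2} by $C_\tau p_\ep$ for a large constant $C_\tau$, and \eqref{eq:dual3} by $-\partial_t r_\ep+C_\tau r_\ep$, and then adds $\tau C_\tau\int_t^T\|q_\ep\|_H^2$ to both sides. The key point is that testing \eqref{eq:dual2} by $C_\tau p_\ep$ produces $C_\tau\int\bigl(|\nabla p_\ep|^2+P(\overline\varphi_\ep)|p_\ep|^2\bigr)$ on the good side, and the strict positivity $P\geq P_0>0$ of assumption {\bf B4} turns the second piece into a genuine $L^2$-coercive term $C_\tau P_0\|p_\ep\|_H^2$. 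It is this term, together with the added $\tau C_\tau\|q_\ep\|_H^2$ and the nonnegative $E_\ep(q_\ep)$, that absorbs all the lower-order contributions (including the $\psi''$ term, handled simply via $\psi''\geq-C_\psi$ plus Gronwall) and yields the uniform bounds \eqref{est-dual1}--\eqref{est-dual7}. Your proposed test for \eqref{eq:dual2} ``involving $\Delta p_\ep$ and $q_\ep$'' does not produce this $P_0\|p_\ep\|_H^2$ term, and you nowhere invoke $P\geq P_0$, so the estimate as you have sketched it does not close.

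Two smaller remarks on the limit passage. For the strong convergences \eqref{conv1_dual} and \eqref{conv3_dual}, the paper does not use an energy-identity/$\limsup$ comparison as you propose; instead, $q_\ep\to q$ in $L^2(0,T;V^*)$ is obtained by comparison and upgraded to $L^2(0,T;H)$ via the compactness lemma \cite[Lem.~3.4]{DST2}, which is precisely where the uniform bound on $E_\ep(q_\ep)$ (retained on the left-hand side of the energy estimate) is used. Your energy-identity route is plausible in principle but would require you to justify convergence of all cross terms, which is more delicate than the DST2 lemma. For identifying $B_\ep(q_\ep)\rightharpoonup B(q)$, the paper relies on \cite[Prop.~3.1]{DST2} combined with the strong $L^2(0,T;H)$ convergence of $q_\ep$ and the $E_\ep(q_\ep)$ bound; your suggestion to dualize onto a smooth $\zeta$ and use $B_\ep(\zeta)\to B(\zeta)$ is in the right spirit, but you should make precise in which space that convergence holds near $\partial\Omega$ before relying on it.
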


\begin{theorem}[First order conditions for (CP)]
    \label{th:foc_local}
    Assume {\bf A1}--{\bf A7}, {\bf B1}--{\bf B5}, {\bf C1}--{\bf C4},
    let $(\overline u, \overline w)\in\mU$
    be an optimal control for (CP),
    and
    let $(\overline\varphi, \overline\sigma):=
    \mathcal S(\overline u, \overline w)$ 
    be the respective optimal state of the local system.
    Then, if $(p, q, r)$ is the unique solution 
    to the local dual system in the sense of 
    \eqref{dual1'_loc}--\eqref{eq:dual3_loc}, 
    for every $(u,w)\in\mU$ there holds
    \begin{equation}
        \label{eq:foc_loc}
        \int_Q(-h(\overline\varphi)p
        +\alpha_u\overline u)
        (u-\overline u)\,\xd x\xd s
        +\int_Q 
        (r+\beta_w\overline w)
        (w-\overline w)\,\xd x\xd s
        \geq0\,.
    \end{equation}
\end{theorem}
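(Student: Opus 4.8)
The plan is to obtain \eqref{eq:foc_loc} by passing to the limit as $\ep\searrow0$ in the first order conditions \eqref{eq:foc_nl} for the adapted problems (CP)$_\ep$, using the convergences already established in Theorems~\ref{th:opt_adap} and \ref{th:conv_dual}. First I would fix the optimal control $(\overline u,\overline w)\in\mU$ of (CP) and its optimal state $(\overline\varphi,\overline\sigma):=\mathcal S(\overline u,\overline w)$, and use exactly these data to build the family (CP)$_\ep$ as in Section~\ref{sec:opt}. By Theorem~\ref{th:opt_adap}, for every $\ep\in(0,\ep_0)$ the problem (CP)$_\ep$ admits an optimal control $(\overline u_\ep,\overline w_\ep)\in\mU$ with optimal nonlocal state $(\overline\varphi_\ep,\overline\sigma_\ep):=\mathcal S_\ep(\overline u_\ep,\overline w_\ep)$, and along $\ep\searrow0$ one has $\overline u_\ep\to\overline u$ and $\overline w_\ep\to\overline w$ strongly in $L^2(0,T;H)$, $\overline u_\ep\wstarto\overline u$ in $L^\infty(Q)\cap H^1(0,T;H)$, $\overline w_\ep\wstarto\overline w$ in $L^\infty(Q)$, $\overline\varphi_\ep\to\overline\varphi$ in $C^0([0,T];H)$ and $\overline\sigma_\ep\to\overline\sigma$ in $L^2(0,T;H)$.

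Next I would introduce, for each $\ep$, the unique solution $(p_\ep,q_\ep,r_\ep)$ of the nonlocal dual system \eqref{dual1'}--\eqref{eq:dual3} associated with $(\overline u_\ep,\overline w_\ep,\overline\varphi_\ep,\overline\sigma_\ep)$. Since the weak-$*$ convergences of $(\overline u_\ep,\overline w_\ep)$ are precisely the hypotheses of Theorem~\ref{th:conv_dual}, that result supplies a unique limit triplet $(p,q,r)$ solving the local dual system \eqref{dual1'_loc}--\eqref{eq:dual3_loc}, together with $p_\ep\to p$ in $L^2(0,T;V)$ and $r_\ep\to r$ in $C^0([0,T];H)\cap L^2(0,T;V)$; in particular $p_\ep\to p$ and $r_\ep\to r$ strongly in $L^2(Q)$. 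By Theorem~\ref{th:foc_nl}, the variational inequality \eqref{eq:foc_nl} holds, for this $\ep$, for every $(u,w)\in\mU$. I would then fix an arbitrary $(u,w)\in\mU$ and let $\ep\searrow0$ termwise in \eqref{eq:foc_nl}. The only nonlinear factor is $h(\overline\varphi_\ep)p_\ep$: since $\overline\varphi_\ep\to\overline\varphi$ a.e.\ in $Q$ (up to a subsequence) and $h$ is bounded and continuous, $h(\overline\varphi_\ep)\to h(\overline\varphi)$ in every $L^\rho(Q)$ with $\rho<\infty$ by dominated convergence, so, combining with $p_\ep\to p$ in $L^2(Q)$ and the boundedness of $h$, one gets $h(\overline\varphi_\ep)p_\ep\to h(\overline\varphi)p$ in $L^2(Q)$. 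Multiplying by $u-\overline u_\ep\to u-\overline u$ in $L^2(Q)$, and treating analogously the terms $\alpha_u\overline u_\ep(u-\overline u_\ep)$ and $(r_\ep+\beta_w\overline w_\ep)(w-\overline w_\ep)$ (using the strong $L^2(Q)$ convergences of $\overline u_\ep$, $\overline w_\ep$ and $r_\ep$), all the remaining terms of \eqref{eq:foc_nl} converge to their counterparts in \eqref{eq:foc_loc}. For the two penalisation remainders, the Cauchy--Schwarz inequality gives
\[
\left|\int_Q(\overline u_\ep-\overline u)(u-\overline u_\ep)\,\xd x\xd s\right|\le\|\overline u_\ep-\overline u\|_{L^2(Q)}\|u-\overline u_\ep\|_{L^2(Q)}\to0\,,
\]
and likewise for the $w$-remainder, since $\|\overline u_\ep-\overline u\|_{L^2(Q)}\to0$ and $\|\overline w_\ep-\overline w\|_{L^2(Q)}\to0$ while $\|u-\overline u_\ep\|_{L^2(Q)}$ and $\|w-\overline w_\ep\|_{L^2(Q)}$ stay bounded by admissibility. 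Passing to the limit in \eqref{eq:foc_nl} then yields \eqref{eq:foc_loc} for the fixed but arbitrary $(u,w)\in\mU$.

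I expect the main (and essentially only) delicate point to be the passage to the limit in the products inside \eqref{eq:foc_nl}: one needs at least one factor in each product to converge strongly in $L^2(Q)$, which is exactly why the strong convergences $p_\ep\to p$ and $r_\ep\to r$ in $L^2(Q)$ obtained in Theorem~\ref{th:conv_dual} (as opposed to mere weak convergence) are indispensable; no control on $q_\ep$ is required here, since $q_\ep$ does not appear in the variational inequality. Everything else is a routine combination of the already established convergence results, so the proof reduces to carefully assembling Theorems~\ref{th:opt_adap}, \ref{th:foc_nl} and \ref{th:conv_dual}.
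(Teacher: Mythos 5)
Your proposal is correct and follows essentially the same approach as the paper: build the adapted problems $(\mathrm{CP})_\ep$ around the fixed optimal control $(\overline u,\overline w)$, invoke Theorem~\ref{th:opt_adap} for the strong $L^2$ convergence of $(\overline u_\ep,\overline w_\ep)$, Theorem~\ref{th:conv_dual} for the convergence of the dual systems, and pass to the limit termwise in \eqref{eq:foc_nl}. Your write-up is in fact more explicit than the paper's (which simply invokes dominated convergence) about why each product converges and why the penalisation remainders vanish.
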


The remaining part of the section is devoted to the proofs of 
Theorems~\ref{th:conv_dual}--\ref{th:foc_local}. For convenience of the reader, we subdivide the proof into steps associated to the
corresponding subsections.

\subsection{Uniform estimates on the nonlocal dual system}
\label{ssec:est_dual}
We test \eqref{eq:dual1} by $p_\ep+\tau q_\ep$, \eqref{eq:dual2} by $C_\tau p_\ep$, and \eqref{eq:dual3} by $-\partial_t r_\ep+C_\tau r_\ep$, where $C_\tau>0$ is a constant, dependent on $\tau$, and big enough, which will be chosen later. 

Fix $t\in (0,T)$. We sum the resulting equations,
integrate in $(t,T)\times \Omega$, and add
to both side of the resulting identity the term $\tau C_\tau\int_t^T\int_\Omega |q_\ep|^2\,\xd x\xd s$: we find
\begin{align*}
&\frac12\int_\Omega |p_\ep(t)+\tau q_\ep(t)|^2\,\xd x
+2\tau\int_t^T E_\ep(q_\ep)\,\xd s
+C_\tau \int_t^T \int_\Omega (|\nabla p_\ep|^2
+P(\bar{\varphi}_\ep)|p_\ep|^2)\,\xd x\xd s\\
&\quad+\int_t^T \int_\Omega |\partial_t r_\ep|^2\,\xd x\xd s
+\frac{1}2\int_\Omega \left(C_\tau|r_\ep(t)|^2
+|\nabla r_\ep(t)|^2\right)\,\xd x
+C_\tau \int_t^T\int_\Omega|\nabla r_\ep|^2 \,\xd x\xd s\\
&\quad+\tau C_\tau\int_t^T \int_\Omega |q_\ep|^2\,\xd x\xd s
\\
&= \frac12\int_\Omega\alpha_\Omega^2 
|\overline\varphi_\ep(T)-\varphi_\Omega|^2\,\xd x
-\int_t^T\int_\Omega B_\ep(q_\ep)p_\ep \,\xd x\xd s
-\int_t^T\int_\Omega \psi''(\overline\varphi_\ep)
q_\ep(p_\ep+\tau q_\ep)\,\xd x\xd s\\
&\quad-\int_t^T\int_\Omega \chi \Delta r_\ep 
(p_\ep+\tau q_\ep)\,\xd x\xd s
-\int_t^T\int_\Omega \chi P(\overline\varphi_\ep)
(p_\ep-r_\ep)(p_\ep+\tau q_\ep)\,\xd x\xd s\\
&\quad+\int_t^T\int_\Omega P'(\overline\varphi_\ep)
(\overline\sigma_\ep+\chi(1-\overline\varphi_\ep)-\overline\mu_\ep)
(p_\ep-r_\ep)
(p_\ep+\tau q_\ep)\,\xd x\xd s\\
&\quad-\int_t^T h'(\overline\varphi_\ep)
\overline u_\ep p_\ep(p_\ep+\tau q_\ep)\,\xd x\xd s
+\int_t^T\int_\Omega \alpha_Q
(\overline\varphi_\ep-\varphi_Q)(p_\ep+\tau q_\ep)\,\xd x\xd s\\
&\quad
+C_\tau \int_t^T\int_\Omega P(\overline\varphi_\ep)
r_\ep p_\ep\,\xd x\xd s
+C_\tau\int_t^T\int_\Omega q_\ep(p_\ep+\tau q_\ep)\,\xd x\xd s\\
&\quad +\int_t^T\int_\Omega P(\overline\varphi_\ep)(p_\ep-r_\ep)
(-\partial_t r_\ep+C_\tau r_\ep)\,\xd x\xd s
+\int_t^T\int_\Omega \chi q_\ep(-\partial_t r_\ep+C_\tau r_\ep)\,\xd x\xd s
\\
&\quad
+\int_t^T\int_\Omega \beta_Q(\overline\sigma_\ep-\sigma_Q)
(-\partial_t r_\ep+C_\tau r_\ep)\,\xd x\xd s=:\sum_{i=1}^{13}I_i\,.
\end{align*}
We estimate the 13 terms on the right-hand side separately:
we will use the symbol $C$ for 
a positive constant independent of $\ep$, $\tau$, and $C_\tau$.
Clearly, $I_1$ is bounded since $\overline\varphi_\ep(T)\to\overline\varphi(T)$ in $H$ by \ref{th:conv}.
As for $I_2$, we have
\begin{align}
\label{eq:bd5-dual}
\notag
I_2&=-\int_t^T \int_\Omega B_\ep(q_\ep)p_\ep\,\xd x\xd s\leq 
2\int_t^T\sqrt{E_\ep(q_\ep)}\sqrt{E_\ep(p_\ep)}\,\xd s\\
\notag
&\leq
\frac{\tau}{4}\int_t^T E_\ep(q_\ep)\,\xd s
+\frac{4}{\tau}\int_t^T E_\ep(p_\ep)\,\xd s\\
&\leq \frac{\tau}{4}\int_t^T E_\ep(q_\ep)\,\xd s+
\frac{C}\tau\int_t^T\int_\Omega |\nabla p_\ep|^2\,\xd x\xd s\,.
\end{align}
By \eqref{eq:est1} and the inclusion 
$V\embed L^6(\Omega)$ we infer
\begin{align}
\label{eq:bd6-dual}
\notag I_3=-\int_t^T\int_\Omega \psi''(\bar{\varphi}_\ep)q_\ep p_\ep\,\xd x\xd s
&\leq C\int_t^T
(1+\|\bar{\varphi}_\ep\|^2_{L^6})\|p_\ep\|_{L^6}\|q_\ep\|_{L^2}\,\xd s\\
&\leq C\left(\int_t^T\|q_\ep\|_H^2\xd s + 
\int_t^T\|p_\ep\|_V^2\,\xd s\right).
\end{align}
As for $I_4$, by \eqref{eq:dual3} and the boundedness of $P$ we deduce
\begin{align}
\label{eq:bd7-dual}
\notag
I_4&=-\int_t^T\int_\Omega \chi \Delta r_\ep (p_\ep+\tau q_\ep)\,\xd x\xd s\\
\notag
&\leq 
\chi\int_t^T \int_\Omega |\partial_t r_\ep+P(\bar{\varphi}_\ep)(p_\ep-r_\ep)+\chi q_\ep||p_\ep+\tau q_\ep|\,\xd x\xd s\\
\notag
&\leq\frac14\int_t^T \int_\Omega |\partial_t r_\ep|^2\,\xd x\xd s
+C\int_t^T \int_\Omega (|p_\ep|^2+|r_\ep|^2+|q_\ep|^2)\,\xd x\xd s\\
&\qquad+C\int_t^T \int_\Omega |p_\ep+\tau q_\ep|^2\,\xd x\xd s
\end{align}
and similarly
\begin{align}
\label{eq:bd9-dual}
I_5\leq
C\int_t^T \int_\Omega (|p_\ep|^2+|r_\ep|^2)\,\xd x\xd s
+C\int_t^T \int_\Omega |p_\ep+\tau q_\ep|^2\,\xd x\xd s\,.
\end{align}
As for $I_6$, the boundedness of $P'$ yields, together with
H\"older and Young inequalities,
\begin{align}
\label{eq:bd8-dual}
\notag
I_6&\leq C\int_t^T\int_\Omega 
|\overline\sigma_\ep+
\chi(1-\overline\varphi_\ep)-
\overline\mu_\ep||p_\ep-r_\ep||p_\ep+\tau q_\ep|\,\xd x\xd s\\
\notag
&\leq C \|p_\ep-r_\ep\|_{L^2(t,T;L^3(\Omega))}^2
+C\int_t^T\|\bar{\sigma}_\ep+\chi(1-\bar{\varphi}_\ep)-\bar{\mu}_\ep\|_{L^6(\Omega)}^2\|p_\ep+\tau q_\ep\|_H^2\,\xd s\\
\notag
&\leq C\left(\int_t^T\|p_\ep\|_V^2\xd s+ 
\int_t^T\|r_\ep\|_V^2\,\xd s\right) \\
&\qquad 
+C\int_t^T\left(1+\|\overline\sigma_\ep+\chi(1-\overline\varphi_\ep)\|_V^2+
\|\overline\mu_\ep\|_V^2\right)
\|p_\ep+\tau q_\ep\|_H^2\,\xd s\,.
\end{align}
In view of the boundedness of $h'$ and $P$, the definition of $\mU$, and
the boundedness of $(\overline\varphi_\ep)_\ep$ in $L^2(Q)$ we find
\begin{align} 
\label{eq:bd10-dual}
    \notag
    I_7+I_8+I_9+I_{10}
    &\leq C\left(1+\int_t^T\|p_\ep\|_H^2\xd s+ 
    \int_t^T\|q_\ep\|_H^2\xd s\right)\\
&+CC_\tau^2\int_t^T\left(
\|r_\ep\|_H^2 + \|p_\ep+\tau q_\ep\|_H^2\right)\,\xd s\,.
\end{align}
Similarly, it is immediate to check that 
\begin{align} 
\label{eq:bd11-dual}
    \notag
    I_{11}+I_{12}+I_{13}
    &\leq \frac14\int_t^T \int_\Omega |\partial_t r_\ep|^2\,\xd x\xd s+
    C\left(1+\int_t^T\|p_\ep\|_H^2\xd s+
    \int_t^T\|q_\ep\|_H^2\xd s\right)\\
    &+CC_\tau^2\int_t^T\|r_\ep\|_H^2\,\xd s\,.
\end{align}
Hence, by taking \eqref{eq:bd5-dual}--\eqref{eq:bd11-dual} into account, 
we can choose and fix $C_\tau>0$ large enough so that 
$\tau C_\tau>>C$ and $C_\tau>>C$: recalling that $P$ is bounded from
below by a positive constant, all the terms in
\eqref{eq:bd5-dual}--\eqref{eq:bd11-dual} not associated with $C_\tau$
can be reabsorbed in the left-hand side. By suitably renominating 
the constants, we are then left with 
\begin{align*}
&\|p_\ep(t)+\tau q_\ep(t)\|_H^2 + \|r_\ep(t)\|^2_V
+\int_t^T (\|q_\ep\|_H^2+ E_\ep(q_\ep))\,\xd s
+\int_t^T \|p_\ep\|_V^2\,\xd s\\
&\qquad
+\int_t^T (\|\partial_t r_\ep\|_H^2+\|r_\ep\|_V^2)\,\xd s\\
&\leq \tilde C\left(1+\int_t^T\|r_\ep\|_H^2\,\xd s
+\int_t^T\left(1+\|\overline\sigma_\ep+\chi(1-\overline\varphi_\ep)\|_V^2+
\|\overline\mu_\ep\|_V^2\right)
\|p_\ep+\tau q_\ep\|_H^2\,\xd s\right)\,,
\end{align*}
where $\tilde C$ is a positive constant independent of $\ep$
(but depending on $\tau$).
An application of Gronwall's lemma and a comparison with
\eqref{conv_loc_first}--\eqref{conv_loc_last} yields the uniform estimates
\begin{align}
&\label{est-dual1} \|p_\ep\|_{L^2(0,T; V)}\leq M\,,\\
&\label{est-dual2}
\|p_\ep+\tau q_\ep\|_{L^\infty(0,T;H)}\leq M\,,\\
&\label{est-dual3}
\|q_\ep\|_{L^2(0,T;H)}+\|E_\ep(q_\ep)\|_{L^1(0,T)}\leq M\,,\\
&\label{est-dual4}
\|r_\ep\|_{H^1(0,T;H)\cap L^\infty(0,T;V)}\leq M\,,
\end{align}
where the constant $M$ is independent of $\ep$.
In particular, \eqref{est-dual2}, 
the boundedness of $(\overline\varphi_\ep)_\ep$
in $L^\infty(0,T; L^{6}(\Omega))$, and the inclusion 
$L^{6/5}(\Omega)\embed V^*$ imply that 
\begin{equation}
    \label{est-dual4}
    \|B_\ep (q_\ep)\|_{L^2(0,T; V^*)}+
    \|\psi''(\overline\varphi_\ep) q_\ep\|_{L^2(0,T; V^*)}\leq M\,.
\end{equation}
Hence a direct comparison in \eqref{eq:dual1}--\eqref{eq:dual3} yields also
\begin{align}
&\label{est-dual5} 
\|p_\ep\|_{L^2(0,T; W)}\leq M\,,\\
&\label{est-dual6} 
\|p_\ep+\tau q_\ep\|_{H^1(0,T;V^*)}\leq M\,,\\
&\label{est-dual7}
\|r_\ep\|_{L^2(0,T;W)}\leq M\,.
\end{align}

\subsection{Passage to the limit in the dual system}
The estimates \eqref{est-dual1}--\eqref{est-dual7} imply the existence 
of a triplet $(p,q,r)$ satisfying \eqref{dual1'_loc}--\eqref{dual5'_loc},
as well as, as $\ep\searrow0$ along a subsequence, the convergences 
\eqref{conv1_dual}--\eqref{conv2_dual} and 
\eqref{conv4_dual}--\eqref{conv7_dual}. As for \eqref{conv3_dual},
we note that by comparison one obtains first 
that $q_\ep\to q$ in $L^2(0,T; V^*)$, and this implies 
also $q_\ep\to q$ in $L^2(0,T; H)$ by the compactness lemma \cite[Lem.~3.4]{DST2}. Moreover, from the strong convergence \eqref{conv3_dual}
and the estimate \eqref{est-dual5}, 
thanks to \cite[Prop.~3.1]{DST2} 
(in a suitable integrated-in-time formulation) we deduce that 
$q\in L^2(0,T; V)$ and that 
\[
  B_\ep (q_\ep)\rightharpoonup B(q) \quad\text{in } L^2(0,T; V^*)\,.
\]
Since $P'$ and $h'$ are bounded and continuous, it is immediate to check 
that all the terms in \eqref{eq:dual1}--\eqref{eq:dual3}
pass to the limit as $\ep\searrow0$, and \eqref{eq:dual1_loc}--\eqref{eq:dual3_loc} follow.
This concludes the proof of existence and convergence in Theorem~\ref{th:conv_dual}.
We are only left with showing that 
the solution to the local dual system is unique: to this end, 
it is enough to prove that if $(p,q,r)$ is a solution to the homogeneous 
problem, i.e.~\eqref{dual1'_loc}--\eqref{eq:dual3_loc} with 
formally $\alpha_\Omega=\alpha_Q=\beta_Q=0$, then $p=q=r=0$.
Now, it is straightforward to check that the same estimate performed 
on the nonlocal dual system in Subsection~\ref{ssec:est_dual}
can be repeated in this context with elementary adaptations:
this yields 
\begin{align*}
&\|p(t)+\tau q(t)\|_H^2 + \|r(t)\|^2_V
+\int_t^T \left(\|q\|_H^2+ \|\nabla q\|_H^2
+\|p\|_V^2+\|\partial_t r\|_H^2+\|r\|_V^2\right)\,\xd s
\\
&\leq \tilde C\left(\int_t^T\|r\|_H^2\,\xd s
+\int_t^T\left(1+\|\overline\sigma
+\chi(1-\overline\varphi)\|_V^2+
\|\overline\mu\|_V^2\right)
\|p+\tau q\|_H^2\,\xd s\right)\,,
\end{align*}
so that the Gronwall lemma implies that $p=q=r=0$.
This concludes the proof of Theorem~\ref{th:conv_dual}.

\subsection{Passage to the limit in the first order conditions}
We are ready to prove now Theorem~\ref{th:foc_local}.
Given an optimal control $(\overline u, \overline w)$ for (CP), 
by Theorem~\ref{th:opt_adap} there is a sequence of 
optimal controls $(\overline u_\ep, \overline w_\ep)_\ep$
satisfying the convergences in Theorem~\ref{th:opt_adap}.
Moreover, denoting by $(p_\ep, q_\ep, r_\ep)_\ep$
the solutions of the corresponding nonlocal dual systems, one has 
the convergences \eqref{conv1_dual}--\eqref{conv7_dual} by Theorem~\ref{th:conv_dual}, as well as the first order conditions 
for optimality for (CP)$_\ep$ in \eqref{eq:foc_nl}.
Since $\overline u_\ep\to \overline u$ and 
$\overline w_\ep\to\overline w$ in $L^2(0,T; H)$, by letting 
$\ep\searrow0$ in \eqref{eq:foc_nl} one readily obtains 
\eqref{eq:foc_loc} by the dominated convergence theorem.
This concludes the proof of Theorem~\ref{th:foc_local}.

\section*{Acknowledgements}

E.D. and L.T. have been supported by the Austrian Science Fund (FWF) project F 65. E.D. has been funded by the Austrian Science Fund (FWF) projects V 662, I 4052, Y 1292, and P35359, as well as from BMBWF through project CZ 09/2023. L.S. has been funded by the
Austrian Science Fund (FWF) project M 2876-N.
E.R.~has been supported by the MIUR-PRIN Grant 2020F3NCPX 
``Mathematics for industry 4.0 (Math4I4)''. The present paper also benefits from the support of 
the GNAMPA (Gruppo Nazionale per l'Analisi Matematica, la Probabilit\`a e le loro Applicazioni)
of INdAM (Istituto Nazionale di Alta Matematica).
The present research has also been supported by the 
MUR grant “Dipartimento di eccellenza 2023–2027” for
Politecnico di Milano.

\bibliographystyle{abbrv}
\bibliography{ref}
\end{document}